\def\N{{\mathbb N}}
\def\R{{\mathbb R}}
\def\P{{\mathbb P}}
\def\E{{\mathbf E}}
\def\eps{\epsilon}
\newtheorem{thm}{Theorem}[section]
\newtheorem{lem}[thm]{Lemma}
\newtheorem{prop}[thm]{Proposition}
\theoremstyle{definition}
\newtheorem{de}[thm]{Definition}
\theoremstyle{remark}
\newtheorem{rem}[thm]{Remark}
\newtheorem{exam}[thm]{Example}
\numberwithin{equation}{section}
\newcommand{\rmd}{{\rm d}}
\begin{document}

\title[Wasserstein convergence rates in the invariance principle]{Wasserstein convergence rates in the invariance principle for deterministic dynamical systems}

\author{Zhenxin Liu}
\address{Z. Liu: School of Mathematical Sciences, Dalian University of Technology, Dalian
116024, P. R. China}
\email{zxliu@dlut.edu.cn}

\author{Zhe Wang}
\address{Z. Wang: School of Mathematical Sciences, Dalian University of Technology, Dalian
116024, P. R. China}
 \email{zwangmath@hotmail.com; wangz1126@mail.dlut.edu.cn}

\date{December 25, 2022}

\subjclass[2010]{37A50, 60F17, 37D20, 60B10}

\keywords{Invariance principle, rate of convergence, Wasserstein distance, nonuniformly hyperbolic system, homogenization}

\begin{abstract}
In this paper, we consider the convergence rate with respect to Wasserstein distance in the invariance principle for deterministic nonuniformly hyperbolic systems, where both discrete time systems and flows are included. Our results apply to uniformly hyperbolic systems and large classes of nonuniformly hyperbolic systems including intermittent maps, Viana maps, finite horizon planar periodic Lorentz gases and others. Furthermore, as a nontrivial application to homogenization problem, we investigate the $\mathcal{W}_2$-convergence rate of a fast-slow discrete deterministic system to a stochastic differential equation.
\end{abstract}

\maketitle

\section{Introduction}
\setcounter{equation}{0}

It is well known that deterministic dynamical systems can exhibit some statistical limit properties if the system is chaotic enough and the observable satisfies some regularity conditions. In recent years, we have seen growing research interests in statistical limit properties for deterministic systems such as the law of large numbers (or Birkhoff's ergodic theorem), central limit theorem (CLT), weak invariance principle (WIP), almost sure invariance principle (ASIP), large deviations and so on.

The WIP (also known as the functional CLT) states that a stochastic process constructed by the sums of random variables with suitable scale converges weakly to Brownian motion, which is a  far-reaching generalization of the CLT. Donsker's theorem \cite{MR40613} is the prototypical invariance principle, which dealt with independent and identically distributed random variables. Later, different versions were extensively studied. In particular, many authors studied the WIP and ASIP for dynamical systems with some hyperbolicity. Denker and Philipp \cite{MR779712} proved the ASIP for uniformly hyperbolic diffeomorphisms and flows. The results are stronger as the ASIP implies the WIP and CLT. Melbourne and Nicol \cite{MR2175992} investigated the ASIP for nonuniformly expanding maps and nonuniformly hyperbolic diffeomorphisms that can be modelled by a Young tower \cite{MR1637655, MR1750438},  and also obtained corresponding results for flows. After that, there are a lot of works on the WIP for nonuniformly hyperbolic systems, which we will not mention here.

To our best knowledge, there are only two works on rates of convergence in the WIP for deterministic dynamical systems in spite that there are many results on the convergence itself. In his Ph.D thesis \cite{antoniou2018rates}, Antoniou obtained the rate of convergence in L\'{e}vy-Prokhorov distance for uniformly expanding maps using the martingale approximation method and applying an estimate for martingale difference arrays \cite{MR1357802} by Kubilyus. Then following the same method, together with Melbourne, he \cite{MR3975864} further generalized the convergence rates to nonuniformly expanding/hyperbolic systems. Specifically, they apply a new version of martingale-coboundary decomposition \cite{MR3795019} by Korepanov et al.

The Wasserstein distance is used extensively in recent years to metrize weak convergence, and it is stronger and contains more information than the L\'{e}vy-Prokhorov distance since it involves the metric of the underlying space. This distance finds important applications in the fields of optimal transport, geometry, partial differential equations etc; see e.g.\ Villani \cite{MR2459454} for details.  There are some results on Wasserstein convergence rates for the CLT in the community of probability and statistics; see e.g.\ \cite{MR2446294,MR2506123,MR2548505}. However, to our knowledge, there are no related results on the invariance principle for dynamical systems. Motivated by \cite{antoniou2018rates, MR3975864}, we aim to estimate the Wasserstein convergence rate in the WIP for nonuniformly hyperbolic systems.

For discrete time systems, we first consider a martingale as an intermediary process. In \cite{MR3975864}, the authors apply a result of Kubilyus \cite{MR1357802} and the key is to estimate the distance between $W_n$ defined in \eqref{exp} below and the intermediary process. In the present paper, we use the ideas in \cite{MR3975864} to estimate the distance between $W_n$ and the intermediary process. Hence most of our efforts are to deal with the Wasserstain distance between the intermediary process and Brownian motion, which is handled by a martingale version of the Skorokhod embedding theorem. In this way, we obtain the rate of convergence $O(n^{-\frac{1}{4}+\delta})$ in the Wasserstein distance, where $\delta$ depends on the degree of nonuniformity. When the system that can be modelled by a Young tower has a superpolynomial tail, $\delta$ can be arbitrarily small. It is well known that it is much simpler to prove statistical limit properties for discrete time systems than that for continuous time ones. So following the argument in \cite{AOP16, MT04, levy15}, we obtain the convergence rates for flows by reducing continuous time to discrete time.

Our results are applicable to uniformly hyperbolic systems and large classes of nonuniformly hyperbolic systems modelled by a Young tower with superpolynomial and polynomial tails. In comparison with \cite{MR3975864}, when the dynamical system has a superpolynominal tail, we can obtain the same convergence rate $O(n^{-\frac{1}{4}+\delta})$ for $\delta$ arbitrarily small. While in our case, the price to pay is that the dynamical system need to have stronger mixing properties. For example, we consider the Pomeau-Manneville intermittent map \eqref{P-M} which has a polynominal tail. By \cite{MR3975864}, the convergence rate is $O(n^{-\frac{1}{4}+\frac{\gamma}{2}+\delta})$ in L\'evy-Prokhorov distance for $\gamma\in(0,\frac{1}{2})$, but we obtain the Wasserstein convergence rate $O(n^{-\frac{1}{4}+\frac{\gamma}{4(1-\gamma)}+\delta})$ only for $\gamma\in(0,\frac{1}{4})$. See Example \ref{PMM} for details.

As a nontrivial application we consider the deterministic homogenization in fast-slow dynamical systems. In \cite{MR3064670}, Gottwald and Melbourne proved that the slow variable with suitable scales converges weakly to the solution of a stochastic differential equation. Then Antoniou and Melbourne \cite{MR3975864} studied the weak convergence rate of the above problem based on the convergence rate in the WIP with respect to L\'{e}vy-Prokhorov distance. In this paper, we obtain the Wasserstein convergence rate for the homogenization problem based on our results. Although the convergence in Wasserstein distance implies that in L\'{e}vy-Prokhorov distance, the result in \cite{MR3975864} is not contained in our results. Indeed, for uniformly hyperbolic fast systems, we obtain the Wasserstein convergence rate $O(\eps^{\frac{1}{4}-\delta})$, while the weak convergence rate is $O(\eps^{\frac{1}{3}-\delta})$ by \cite{MR3975864} under a little stronger condition, where $\eps$ is identical with $n^{-\frac{1}{2}}$. See Remark
\ref{diss} for details.

The remainder of this paper is organized as follows. In Section 2, we give the definition and basic properties of Wasserstein distances. In Section 3, we review the definitions of nonuniformly expanding maps and nonuniformly hyperbolic diffeomorphisms, and state the main results for discrete systems.
In Section 4, we first introduce the method of martingale approximation and summarize some required properties, then we prove the main results. In Section 5, we consider the convergence rate for flows. In the last section, we give an application to fast-slow systems.

Throughout the paper, we use $1_A$ to denote the indicator function of measurable set $A$. As usual, $a_n=o(b_n)$ means that $\lim_{n\to \infty} a_n/b_n=0$, $a_n=O(b_n)$ means that there exists a constant $C>0$ such that $|a_n|\le C |b_n|$ for all $n\ge 1$, and $\|\cdot\|_{L^p}$ means the $L^p$-norm. For simplicity we write $C$ to denote constants independent of $n$ and $C$ may change from line to line. We use $\rightarrow_{w}$ to denote the weak convergence in the sense of probability measures \cite{MR1700749}. We denote by $C[0,1]$ the space of all continuous functions on $[0,1]$ equipped with the supremum distance $d_C$, that is
\[
d_C(x,y):=\sup_{t\in [0,1]}|x(t)-y(t)|, \quad x,y\in C[0,1].
\]
We use $\P_X$ to denote the law/distribution of random variable $X$ and use $X=_d Y$ to mean $X, Y$ sharing the same distribution.

\section{Preliminaries}

In this section, we review the definition of Wasserstein distances and some important properties about the distance. See e.g.\ \cite{MR2091955,Rachev,MR2459454} for details.

Let $(\mathcal{X}, d)$ be a Polish space, i.e.\ a complete separable metric space, equipped with the Borel $\sigma$-algebra $\mathcal{B}$. For given two probability measures $\mu$ and $\nu$ on $\mathcal{X}$, take two random variables $X$ and $Y$ such that $\hbox{law} (X)=\mu, \hbox{law} (Y)=\nu$. Then the pair $(X,Y)$ is called a {\em coupling} of $\mu$ and $\nu$; the joint distribution of $(X,Y)$ is also called a {\em coupling} of $\mu$ and $\nu$.

\begin{de}\label{was}
Let $q\in[1,\infty)$. Then for any two probability measures $\mu$ and $\nu$ on $\mathcal{X}$, the {\em Wasserstein distance of order $q$} between them is defined by
\begin{align*}
\mathcal{W}_{q}(\mu,\nu) & :={\bigg( \inf_{\pi\in\Pi(\mu,\nu)}\int_\mathcal{X} {d(x,y)}^q\mathrm{d}\pi(x,y)\bigg)}^{1/q}\\
 &= \inf \{ [\mathbf{E} {d(X,Y)}^q]^{1/q}; \hbox{law} (X)=\mu, \hbox{law} (Y)=\nu \},\nonumber
\end{align*}
where $\Pi(\mu,\nu)$ is the set of all couplings of $\mu$ and $\nu$.
\end{de}

\begin{prop}{\rm(see \cite[Lemma 5.2]{MR2091955})}\label{exi}
Given two probability measures $\mu$ and $\nu$ on $\mathcal {X}$, the infimum in Definition \ref{was} can be attained for some coupling $(X, Y)$ of $\mu$ and $\nu$.
\end{prop}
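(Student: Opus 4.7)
The plan is to apply the direct method: exhibit a minimizing sequence of couplings, pass to a weak limit via Prokhorov's theorem, and conclude by a lower semicontinuity argument for the cost. Assume without loss of generality that the infimum $I:=\inf_{\pi\in\Pi(\mu,\nu)}\int d(x,y)^q\,\mathrm{d}\pi(x,y)$ is finite (otherwise any coupling is optimal), and choose a sequence $\{\pi_n\}\subset\Pi(\mu,\nu)$ with $\int d^q\,\mathrm{d}\pi_n\to I$. Once a weak subsequential limit $\pi^\ast$ of $\{\pi_n\}$ is obtained, a coupling $(X,Y)$ attaining the infimum is realized on $\mathcal{X}\times\mathcal{X}$ by the coordinate projections under $\pi^\ast$, so the whole argument reduces to producing $\pi^\ast$ and showing it is a minimizer.

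First I would establish tightness of $\Pi(\mu,\nu)\subset\mathcal{P}(\mathcal{X}\times\mathcal{X})$. Since $\mathcal{X}$ is Polish, each of $\{\mu\}$ and $\{\nu\}$ is tight, so for every $\eta>0$ there exist compact sets $K_\mu,K_\nu\subset\mathcal{X}$ with $\mu(\mathcal{X}\setminus K_\mu)<\eta/2$ and $\nu(\mathcal{X}\setminus K_\nu)<\eta/2$. For any $\pi\in\Pi(\mu,\nu)$,
\[
\pi\bigl((\mathcal{X}\times\mathcal{X})\setminus(K_\mu\times K_\nu)\bigr)\le \mu(\mathcal{X}\setminus K_\mu)+\nu(\mathcal{X}\setminus K_\nu)<\eta,
\]
and $K_\mu\times K_\nu$ is compact in $\mathcal{X}\times\mathcal{X}$. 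Hence $\Pi(\mu,\nu)$ is uniformly tight, and by Prokhorov's theorem I can extract a subsequence $\pi_{n_k}\rightarrow_w\pi^\ast$ for some probability measure $\pi^\ast$ on $\mathcal{X}\times\mathcal{X}$. The marginal property is preserved under weak convergence: for every bounded continuous $f:\mathcal{X}\to\mathbb{R}$, the functions $(x,y)\mapsto f(x)$ and $(x,y)\mapsto f(y)$ are bounded continuous on $\mathcal{X}\times\mathcal{X}$, so $\int f(x)\,\mathrm{d}\pi^\ast=\lim_k\int f(x)\,\mathrm{d}\pi_{n_k}=\int f\,\mathrm{d}\mu$ and similarly for $\nu$; therefore $\pi^\ast\in\Pi(\mu,\nu)$.

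It remains to verify that $\pi^\ast$ attains the infimum. The cost function $c(x,y):=d(x,y)^q$ is continuous and non-negative on the Polish space $\mathcal{X}\times\mathcal{X}$. By the Portmanteau theorem (applied to non-negative lower semicontinuous functions via truncation by $c\wedge N$, which is bounded continuous, followed by monotone convergence in $N$),
\[
\int c\,\mathrm{d}\pi^\ast\le\liminf_{k\to\infty}\int c\,\mathrm{d}\pi_{n_k}=I.
\]
Since $\pi^\ast\in\Pi(\mu,\nu)$ forces $\int c\,\mathrm{d}\pi^\ast\ge I$, equality holds. Finally, take $(\Omega,\mathcal{F},\mathbb{P})=(\mathcal{X}\times\mathcal{X},\mathcal{B}\otimes\mathcal{B},\pi^\ast)$ and let $X,Y$ be the coordinate projections; then $\mathrm{law}(X)=\mu$, $\mathrm{law}(Y)=\nu$, and $\mathbf{E}\,d(X,Y)^q=I$.

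The only delicate step is the lower semicontinuity of $\pi\mapsto\int c\,\mathrm{d}\pi$ when $c$ is unbounded, which I would handle by the standard truncation $c_N:=\min(c,N)\in C_b(\mathcal{X}\times\mathcal{X})$: $\int c_N\,\mathrm{d}\pi^\ast=\lim_k\int c_N\,\mathrm{d}\pi_{n_k}\le\liminf_k\int c\,\mathrm{d}\pi_{n_k}$, then let $N\to\infty$ using monotone convergence on the left. The remaining pieces (tightness, preservation of marginals, construction of $X,Y$) are routine consequences of the Polish structure.
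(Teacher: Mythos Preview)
Your argument is correct and is the standard direct-method proof of existence of optimal couplings. Note, however, that the paper does not supply its own proof of this proposition; it simply cites \cite[Lemma 5.2]{MR2091955} and uses the result as a black box. So there is no ``paper's proof'' to compare against --- your write-up fills in exactly the classical argument (tightness of $\Pi(\mu,\nu)$ via Prokhorov, closedness of $\Pi(\mu,\nu)$ under weak limits, and lower semicontinuity of $\pi\mapsto\int d^q\,\mathrm{d}\pi$ by truncation and monotone convergence) that underlies the cited reference.
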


Those couplings achieving the infimum in Proposition \ref{exi} are called {\em optimal couplings} of $\mu$ and $\nu$. Note also that the distance
$\mathcal{W}_{q}(\mu,\nu)$ can be bounded above by the $L^q$ distance of any coupling $(X,Y)$ of $\mu$ and $\nu$.

\begin{prop}{\rm(see \cite[Theorem 5.6]{MR2091955} or \cite[Definition 6.8]{MR2459454})}\label{lid}
$\mathcal{W}_{q}(\mu_n, \mu)\to 0$ if and only if the following two conditions hold:
\begin{enumerate}
  \item $\mu_{n}\rightarrow_{w}\mu;$
  \item $\int_{\mathcal X} d(x,x_{0})^{q}\rmd\mu_{n}(x)\rightarrow \int_{\mathcal X} d(x,x_{0})^{q}\rmd\mu(x)$ for some $($thus any$)$ $x_{0}\in\mathcal{X}$.
\end{enumerate}
 In particular, if $d$ is bounded, then the convergence with respect to $\mathcal{W}_{q}$ are equivalent to the weak convergence.
\end{prop}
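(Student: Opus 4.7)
The plan is to prove the two implications separately, and then dispatch the final sentence about bounded metrics as an easy consequence.

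For the forward direction $\mathcal{W}_q(\mu_n,\mu)\to 0 \Rightarrow (1)+(2)$, I would first observe that $\mathcal{W}_q$ satisfies a triangle inequality on the space of probability measures with finite $q$-th moment, and that $\mathcal{W}_q(\mu,\delta_{x_0})^q = \int d(x,x_0)^q\,\rmd\mu(x)$ (the unique coupling of $\mu$ with a Dirac mass). Applied to the triple $(\mu_n,\mu,\delta_{x_0})$ this gives
\[
\bigl|\mathcal{W}_q(\mu_n,\delta_{x_0})-\mathcal{W}_q(\mu,\delta_{x_0})\bigr|\le \mathcal{W}_q(\mu_n,\mu),
\]
so condition (2) follows at once. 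For weak convergence (1), I would use Jensen's inequality to obtain $\mathcal{W}_1(\mu_n,\mu)\le \mathcal{W}_q(\mu_n,\mu)$, and then invoke Kantorovich--Rubinstein duality to conclude that $\mathcal{W}_1$-convergence implies that $\int f\,\rmd\mu_n\to\int f\,\rmd\mu$ for every bounded Lipschitz $f$, which on a Polish space is equivalent to $\mu_n\to_w\mu$.

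For the backward direction $(1)+(2) \Rightarrow \mathcal{W}_q(\mu_n,\mu)\to 0$, I would invoke Skorokhod's representation theorem (available since $\mathcal X$ is Polish) to realize $\mu_n,\mu$ as laws of random variables $X_n,X$ on a common probability space with $X_n\to X$ almost surely. Then $d(X_n,X)^q\to 0$ a.s., and by Proposition \ref{exi} (or directly by definition) it suffices to show $\E\, d(X_n,X)^q\to 0$. The bound $d(X_n,X)^q\le 2^{q-1}\bigl(d(X_n,x_0)^q+d(X,x_0)^q\bigr)$ reduces the problem to uniform integrability of $\{d(X_n,x_0)^q\}_n$. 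The continuous mapping theorem applied to the continuous functional $x\mapsto d(x,x_0)^q$ yields $d(X_n,x_0)^q\to d(X,x_0)^q$ in distribution, and hypothesis (2) yields convergence of their expectations to the finite limit $\E\, d(X,x_0)^q$. These two facts together (via Vitali's theorem, equivalently Scheffé's lemma adapted to nonnegative integrable variables) force uniform integrability, hence $\E\, d(X_n,X)^q\to 0$ by the dominated/Vitali convergence theorem.

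The main obstacle is the backward direction, specifically the step extracting uniform integrability from the combination of weak convergence of $d(X_n,x_0)^q$ to $d(X,x_0)^q$ together with convergence of their $L^1$ norms; one must take a little care that the Skorokhod coupling $X_n,X$ chosen for a.s.\ convergence also respects the marginals needed for (2). Once uniform integrability is in hand, everything reduces to standard convergence results. Finally, when $d$ is bounded, condition (2) is automatic from (1) by bounded convergence applied to the continuous function $d(\cdot,x_0)^q$, so $\mathcal{W}_q$-convergence collapses to weak convergence, giving the last assertion.
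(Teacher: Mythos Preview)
Your argument is correct. The paper itself gives no proof of this proposition---it simply cites \cite[Theorem~5.6]{MR2091955} and \cite[Definition~6.8]{MR2459454}---so there is no in-paper argument to compare against. Your route (triangle inequality with $\delta_{x_0}$ and $\mathcal W_1\le\mathcal W_q$ for the forward direction; Skorokhod representation plus uniform integrability for the backward direction) is the standard one and is essentially what appears in Villani's treatment. The ``obstacle'' you flag is not a genuine one: the Skorokhod construction gives $\mathrm{law}(X_n)=\mu_n$ exactly, so $\E\, d(X_n,x_0)^q=\int_{\mathcal X} d(x,x_0)^q\,\rmd\mu_n(x)$ and hypothesis~(2) transfers directly to the coupled variables; the step from ``$Y_n\ge 0$, $Y_n\to Y$ a.s., $\E Y_n\to \E Y<\infty$'' to uniform integrability (equivalently $L^1$-convergence) is a clean consequence of Fatou applied to $Y_n+Y-|Y_n-Y|\ge 0$, so no subtlety remains.
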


\begin{prop}\label{LIP}
Suppose that $\mathcal {G}:\mathcal{X}\rightarrow \mathcal{X}$ is Lipschitz continuous with constant $K$. Then for any two probability measures $\mu$ and $\nu$ on $\mathcal{X}$ and $q\in[1,\infty)$, we have
\[\mathcal{W}_q(\mu\circ \mathcal{G}^{-1},\nu\circ \mathcal{G}^{-1})\le K \mathcal{W}_q(\mu,\nu).\]
\end{prop}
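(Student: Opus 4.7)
The plan is to produce an explicit coupling of the pushforward measures $\mu\circ\mathcal{G}^{-1}$ and $\nu\circ\mathcal{G}^{-1}$ by pushing an optimal coupling of $\mu$ and $\nu$ forward through $\mathcal{G}$, and then to use the Lipschitz bound to control the cost.

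More precisely, I would first invoke Proposition \ref{exi} to obtain an optimal coupling $(X,Y)$ of $\mu$ and $\nu$, so that $\mathrm{law}(X)=\mu$, $\mathrm{law}(Y)=\nu$, and
\[
\mathcal{W}_q(\mu,\nu)=\bigl(\mathbf{E}\,d(X,Y)^q\bigr)^{1/q}.
\]
Then I would consider the pair $(\mathcal{G}(X),\mathcal{G}(Y))$. Since $\mathrm{law}(\mathcal{G}(X))=\mu\circ\mathcal{G}^{-1}$ and $\mathrm{law}(\mathcal{G}(Y))=\nu\circ\mathcal{G}^{-1}$, this pair is a coupling of the two pushforward measures, and hence its $L^q$ cost is an upper bound for the Wasserstein distance between them, by the remark following Proposition \ref{exi}.

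The final step is to apply the Lipschitz property. Because $d(\mathcal{G}(x),\mathcal{G}(y))\le K\,d(x,y)$ pointwise, we get
\[
\mathbf{E}\,d(\mathcal{G}(X),\mathcal{G}(Y))^q \;\le\; K^q\,\mathbf{E}\,d(X,Y)^q,
\]
so taking $q$-th roots yields
\[
\mathcal{W}_q(\mu\circ\mathcal{G}^{-1},\nu\circ\mathcal{G}^{-1})\;\le\;\bigl(\mathbf{E}\,d(\mathcal{G}(X),\mathcal{G}(Y))^q\bigr)^{1/q}\;\le\;K\,\mathcal{W}_q(\mu,\nu),
\]
which is the desired inequality. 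There is no genuine obstacle here; the only thing to be careful about is to justify the use of an optimal coupling (handled by Proposition \ref{exi}) rather than having to take an infimum over coupling pairs directly, although the argument also goes through verbatim by taking an $\varepsilon$-almost-optimal coupling if one prefers to avoid the existence statement.
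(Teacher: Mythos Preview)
Your proposal is correct and follows essentially the same approach as the paper's proof: both select an optimal coupling $(X,Y)$ of $\mu$ and $\nu$ via Proposition~\ref{exi}, observe that $(\mathcal{G}(X),\mathcal{G}(Y))$ is a coupling of the pushforwards, and apply the Lipschitz bound pointwise before taking $q$-th roots. Your additional remark about using an $\varepsilon$-almost-optimal coupling is a nice touch but not needed given Proposition~\ref{exi}.
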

\begin{proof}
By Proposition \ref{exi}, we can choose an optimal coupling $(X, Y)$ of $\mu$ and $\nu$ such that
\[
[\E d(X, Y)^{q}]^{1/q}= \mathcal{W}_q(\mu,\nu).
\]
Then
\begin{align*}
&\mathcal{W}_q(\mu\circ \mathcal{G}^{-1},\nu\circ \mathcal{G}^{-1})\le [\E d(\mathcal{G}(X),\mathcal{G}(Y))^{q}]^{1/q}\\
&\le K[\E d(X,Y)^{q}]^{1/q}= K \mathcal{W}_q(\mu,\nu).
\end{align*}
\end{proof}

\begin{rem}
In the following, we use the notation $\mathcal{W}_p(X,Y)$ to mean $\mathcal{W}_p(\P_X, \P_Y)$ for the sake of simplicity. But we should keep in mind that $(X,Y)$ need not be an optimal coupling of $(\P_X, \P_Y)$.
\end{rem}

The following result is known; see e.g. \cite[Lemma 5.3]{MR2091955} or \cite[Corollary 8.3.1]{Rachev} for details. But the forms or proofs in these references
are different from the following one, which is more appropriate for our purpose. For the convenience of the reader, we also give a proof.

\begin{prop}\label{levy}
For any given probability measures $\mu$ and $\nu$ on $\mathcal{X}$ and $p\in[1,\infty)$, we have
\[
\pi(\mu,\nu)\le \mathcal{W}_{p}(\mu,\nu)^{\frac{p}{p+1}},
\]
where $\pi$ is the L\'{e}vy-Prokhorov distance defined by
\begin{equation*}
\pi(\mu,\nu):=\inf\{\eps> 0: \mu(A)\le \nu(A^{\eps})+\eps \quad \hbox{for ~all ~closed ~sets~} A\in \mathcal{B}\}.
\end{equation*}
Here $A^{\eps}$ denotes the $\eps$-neighborhood of $A$.
\end{prop}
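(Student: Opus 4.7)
The plan is to exploit an optimal coupling and Markov's inequality to decompose the probability that $X$ lies in a closed set $A$. By Proposition \ref{exi}, I can pick a coupling $(X,Y)$ of $\mu,\nu$ realizing the infimum, so that $[\E d(X,Y)^p]^{1/p}=\mathcal{W}_p(\mu,\nu)$.

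Fix an arbitrary closed set $A\in\mathcal{B}$ and an arbitrary $\eta>0$. I would split on whether the coupling is ``close'' at scale $\eta$:
\[
\mu(A)=\P(X\in A)=\P(X\in A,\,d(X,Y)\le \eta)+\P(X\in A,\,d(X,Y)>\eta).
\]
On the event $\{X\in A,\,d(X,Y)\le \eta\}$ one has $Y\in A^{\eta}$, so the first term is bounded by $\P(Y\in A^{\eta})=\nu(A^{\eta})$. Markov's inequality applied to $d(X,Y)^p$ controls the second term:
\[
\P(d(X,Y)>\eta)\le \frac{\E d(X,Y)^p}{\eta^p}=\frac{\mathcal{W}_p(\mu,\nu)^p}{\eta^p}.
\]
Combining, $\mu(A)\le \nu(A^{\eta})+\mathcal{W}_p(\mu,\nu)^p/\eta^p$ for every $\eta>0$.

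The final step is to optimize in $\eta$ so that the error term matches $\eta$ itself. Setting $\mathcal{W}_p(\mu,\nu)^p/\eta^p=\eta$ gives $\eta=\mathcal{W}_p(\mu,\nu)^{p/(p+1)}$, and with this choice the inequality $\mu(A)\le \nu(A^{\eta})+\eta$ holds uniformly in the closed set $A$. By the definition of the L\'evy--Prokhorov distance, this forces $\pi(\mu,\nu)\le \mathcal{W}_p(\mu,\nu)^{p/(p+1)}$. (One should briefly handle the trivial case $\mathcal{W}_p(\mu,\nu)=0$, where $\mu=\nu$ and both sides are zero.)

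There is no real obstacle here; the only subtlety is making sure the chosen coupling yields simultaneously the $L^p$-bound and the correct pointwise implication $\{d(X,Y)\le \eta,\,X\in A\}\subset\{Y\in A^{\eta}\}$, which is immediate since $A^{\eta}$ is the open $\eta$-neighborhood of $A$. Everything else is bookkeeping with Markov's inequality and an algebraic optimization.
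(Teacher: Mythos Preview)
Your proof is correct and follows essentially the same route as the paper: split $\mu(A)$ according to whether $d(X,Y)$ is small, bound the tail by Markov's inequality, and optimize the threshold to $\mathcal{W}_p(\mu,\nu)^{p/(p+1)}$. The only cosmetic difference is that you invoke the optimal coupling from Proposition~\ref{exi} directly, whereas the paper works with an arbitrary coupling and then passes to the infimum; either way the argument is the same.
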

\begin{proof}
Let $A$ be a closed set. Then for any coupling $(X,Y)$ of $\mu$ and $\nu$, we have
\begin{align*}
\P(X\in A)&\le \P(Y\in A^{\eps})+\P(d(X,Y)\ge \eps)\\
&\le \P(Y\in A^{\eps})+\frac{\E d(X,Y)^p}{\eps^p}.
\end{align*}
Note that $\P(X\in A)$ and $\P(Y\in A^{\eps})$ depend on $X$ and $Y$ only through their distributions. So by the arbitrariness of the coupling $(X,Y)$ of $\mu$ and $\nu$,
\[
\P(X\in A)\le \P(Y\in A^{\eps})+ \eps^{-p}\mathcal{W}_{p}(\mu,\nu)^{p}.
\]
Choosing $\eps=\mathcal{W}_{p}(\mu,\nu)^{\frac{p}{p+1}}$, we deduce that $\P(X\in A)\le \P(Y\in A^{\eps}) + \eps$. Hence
\[
\pi(\mu,\nu)\le \mathcal{W}_{p}(\mu,\nu)^{\frac{p}{p+1}}.
\]
\end{proof}

\section{Nonuniformly expanding/hyperbolic maps}
\setcounter{equation}{0}

\subsection{Nonuniformly expanding map}
Let $(M,d)$ be a bounded metric space with Borel probability measure $\rho$. Let $T:M\rightarrow M$ be a non-singular (i.e. $\rho(T^{-1}E)=0$ if and only if $\rho(E)=0$ for all Borel measurable sets $E$), ergodic transformation. Suppose that $Y$ is a subset of $M$ with positive measure, and $\{Y_j\}$ is an at most countable measurable partition of $Y$ with $\rho(Y_j)>0$. Let $R:Y\rightarrow \mathbb{Z}^{+}$ be an integrable function which is constant on each $Y_j$ and $T^{R(y)}(y)\in Y$ for all $y\in Y$. We call $R$ the {\em return time} and $F=T^{R}:Y\rightarrow Y$ is the corresponding {\em induced map}. We do not require that $R$ is the first return time to $Y$.

Let $\nu=\frac{\mathrm{d}\rho|_Y}{\mathrm{d}\rho|_Y\circ F}$ be the inverse Jacobian of $F$ with respect to $\rho$. We assume that there are constants $\lambda>1$, $K, C>0$ and $\eta\in (0,1]$ such that for any $x,y$ in a same partition element $Y_j$,
\begin{enumerate}
  \item $F|_{Y_j}=T^{R(Y_j)}:Y_j\rightarrow Y$ is a (measure-theoretic) bijection for each $j$,
  \item $d(Fx,Fy)\geq\lambda d(x,y)$,
  \item $d(T^{l}x,T^{l}y)\leq C d(Fx,Fy)$ for all $0\leq l < R(Y_j)$,
  \item $|\log\nu(x)-\log\nu(y)|\leq Kd(Fx,Fy)^{\eta}$.
\end{enumerate}
Then such a dynamical system $T:M\rightarrow M$ is  a {\em nonuniformly expanding map}. If $R\in L^p(Y)$ for some $p\ge1$, then we call $T:M\rightarrow M$ a nonuniformly expanding map of {\em order $p$}. It is standard that there is a unique absolutely continuous $F$-invariant probability measure $\mu_Y$ on $Y$ with respect to the measure $\rho$.

We define the {\em Young tower} as in \cite{MR1637655,MR1750438}. Let $\Delta:=\{(x, l):x\in Y, l=0,1,\ldots,R(x)-1\}$, and define an {\em extension map} $f:\Delta\rightarrow\Delta$ by
\[
f(x,l):=
\begin{cases}
(x,l+1), & \text{if } l+1<R(x),\\
(Fx,0), & \text{if }  l+1=R(x).
\end{cases}
\]
We have a {\em projection map} $\pi_\Delta:\Delta\rightarrow M$ given by $\pi_\Delta(x,l):=T^lx$ and it is a semiconjugacy satisfying $T\circ \pi_\Delta=\pi_\Delta\circ f$. Then we obtain an ergodic $f$-invariant probability measure $\mu_\Delta$ on $\Delta$ given by $\mu_\Delta:=\mu_Y\times m/\int_Y R \rmd \mu_Y$, where $m$ denotes the counting measure on $\N$. Hence there exists an {\em extension space} $(\Delta, \mathcal{M}, \mu_\Delta)$, where $\mathcal{M}$ is the underlying $\sigma$-algebra on $(\Delta, \mu_\Delta)$. Further, the push-forward measure $\mu=(\pi_\Delta)_\ast\mu_\Delta$ is an absolutely continuous $T$-invariant probability measure.

Given a H\"{o}lder observable $v:M\rightarrow \R$ with exponent $\eta\in (0,1]$, define
\[
|v|_\infty:=\sup_{x\in M}|v(x)|, \quad |v|_\eta:=\sup_{x\neq y}\frac{|v(x)-v(y)|}{d(x,y)^{\eta}}.
\]
Let $C^\eta(M)$ denote the Banach space of H\"{o}lder observables with norm $\|v\|_\eta=|v|_\infty+|v|_\eta<\infty.$ Consider the following continuous processes $W_{n}$ defined by
\begin{equation}\label{exp}
W_{n}(t):=\frac{1}{\sqrt{n}}\bigg[\sum_{j=0}^{[nt]-1}v\circ T^j+(nt-[nt])v\circ T^{[nt]}\bigg],\quad t\in[0,1],
\end{equation}
where $v\in C^\eta(M)$ with $\int_M v \rmd \mu=0$.  Let $v_n:=\sum_{i=0}^{n-1}v\circ T^i$ denote the Birkhoff sum.\\

The following lemma is a summary of known results; see \cite{MR2027296,MR3795019,MR2175992,MR2995657} for details.
\begin{lem}\label{res}
Suppose that $T:M\rightarrow M$ is a nonuniformly  expanding map of order $p\ge2$. Let $v:M\rightarrow \mathbb{R}$ be a H$\ddot{o}$lder observable with $\int_M v \mathrm{d}\mu=0$. Then the following statements hold.

$(a)$ The limit $\sigma^2=\lim_{n\rightarrow\infty}\int_M(n^{-\frac{1}{2}}v_n)^2\rmd \mu$ exists.

$(b)$ $n^{-\frac{1}{2}}v_{n}\rightarrow_{w} G$ as $n\rightarrow\infty$, where $G$ is normal with mean zero and variance $\sigma^2$.

$(c)$ $W_n\rightarrow_{w} W$ in $C[0,1]$ as $n\rightarrow\infty$, where $W$ is a Brownian motion with mean zero and variance $\sigma^2$.

$(d)$ If $\mu_{Y}(R>n)=O(n^{-(\beta+1)}), \beta>1$, then
\begin{equation*}
\lim_{n\rightarrow\infty}\int_{M}|n^{-\frac{1}{2}}v_{n}|^{q}\mathrm{d}\mu= \E|G|^{q}, \hbox{~for~all~} q\in[0,2\beta).
\end{equation*}

$(e)$ $\left\|\max_{k\leq n}|\sum_{i=0}^{k-1}v\circ T^{i}|\right\|_{L^{2(p-1)}}\leq C\|v\|_{\eta}n^{1/2}$ for all $n\geq1$.
\end{lem}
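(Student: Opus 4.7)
The plan is to reduce every assertion to properties of a stationary martingale difference sequence obtained from a martingale-coboundary decomposition on the Young tower, and then to invoke classical martingale limit theorems together with the known moment bounds for nonuniformly expanding systems.

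First I would lift the observable. Setting $\tilde v := v \circ \pi_\Delta$, I obtain a zero-mean H\"older observable on $(\Delta,\mathcal M,\mu_\Delta)$. Following the construction of Korepanov, Kosloff and Melbourne \cite{MR3795019} (which refines the approach of Melbourne--Nicol \cite{MR2175992}), the integrability $R\in L^p(Y)$ combined with the H\"older regularity of $v$ permits a decomposition
\[
\tilde v = m + \chi\circ f - \chi,
\]
where $m$ is a reverse-martingale difference with respect to a decreasing sequence of $\sigma$-algebras on $\Delta$, $\chi$ is a measurable coboundary, and both $m$ and $\chi$ lie in $L^{2(p-1)}(\mu_\Delta)$. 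This reduces the Birkhoff sum $\tilde v_n$ to $m_n + \chi\circ f^n - \chi$, so every scaling limit question about $v_n$ becomes the same question for the martingale sum $m_n$ plus a telescoping remainder that contributes only $O(1)$ in $L^{2(p-1)}$, hence $O(n^{-1/2})$ after the WIP scaling.

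From this reduction, (a) and (b) follow from the standard stationary ergodic martingale CLT applied to $m$, with $\sigma^2=\int_\Delta m^2\,\rmd\mu_\Delta$; (c) follows from the corresponding martingale invariance principle of McLeish/Brown, transferred back to $(M,\mu)$ via the semiconjugacy $\pi_\Delta$. All three statements are recorded in precisely this form in Melbourne--Nicol \cite{MR2175992}. For (d), under the polynomial tail hypothesis $\mu_Y(R>n)=O(n^{-(\beta+1)})$, Melbourne--T\"or\"ok \cite{MR2027296} establish the $L^q$-boundedness of $n^{-1/2}v_n$ uniformly in $n$ for every $q\in[0,2\beta)$; combined with the weak convergence of (b), this yields the uniform integrability of $|n^{-1/2}v_n|^q$ and hence the claimed convergence of $q$-th absolute moments to $\E|G|^q$. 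For (e), Doob's maximal inequality applied to $m_n$ gives $\|\max_{k\le n}|m_k|\|_{L^{2(p-1)}}\le C\|m\|_{L^{2(p-1)}}n^{1/2}$, while the coboundary contributes $\max_{k\le n}|\chi\circ f^k-\chi|\le 2\max_{k\le n}|\chi\circ f^k|$, whose $L^{2(p-1)}$-norm is bounded by $Cn^{1/(2(p-1))}\|\chi\|_{L^{2(p-1)}}$ via a crude union bound, which is absorbed into the $n^{1/2}$ factor since $p\ge 2$. The sharp form is recorded in \cite{MR2995657}.

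The main obstacle is the careful matching of integrability exponents across the four references: the order $p$ assumption on $R$ has to be propagated through the martingale-coboundary decomposition so that $m\in L^{2(p-1)}$ with a quantitative norm bound $\|m\|_{L^{2(p-1)}}\le C\|v\|_\eta$, since this exact exponent is what is needed both for (e) and for the Wasserstein estimates later in the paper. Once this exponent tracking is verified, each of (a)--(e) is a direct citation, and no further work is required beyond assembling the statements from \cite{MR2027296, MR3795019, MR2175992, MR2995657}.
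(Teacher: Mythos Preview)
Your overall approach---treat the lemma as a summary of known results and cite \cite{MR2027296,MR3795019,MR2175992,MR2995657}---is exactly what the paper does: its proof consists solely of pointers to these references (items (a)--(c) to \cite{MR2027296,MR3795019,MR2175992}, item (d) to \cite[Theorem~3.5]{MR2995657}, item (e) to \cite[Corollary~2.10]{MR3795019}). So at the level of strategy you are aligned with the paper.

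However, your sketch of \emph{why} (e) holds contains a genuine exponent error. The martingale-coboundary decomposition of \cite{MR3795019} for a map of order $p$ gives $m\in L^{p}(\Delta)$ and $\chi\in L^{p-1}(\Delta)$ (this is recorded in the paper as Proposition~\ref{dec}), not $m,\chi\in L^{2(p-1)}$ as you assert. Consequently your Doob/Burkholder argument applied to $m$ only yields $\|\max_{k\le n}|m_k|\|_{L^{p}}\le C n^{1/2}$, and your coboundary estimate only controls the $L^{p-1}$-norm; neither reaches the $L^{2(p-1)}$-norm claimed in (e). The exponent $2(p-1)$ in (e) does \emph{not} come from the tower decomposition alone but from the induced-observable analysis on the Gibbs--Markov base $Y$, which is the content of \cite[Corollary~2.10]{MR3795019}. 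You also mislabel two citations: for (d) you attribute the moment-convergence result to \cite{MR2027296} (Gou\"ezel), when it is \cite{MR2995657} (Melbourne--T\"or\"ok); and for (e) you point to \cite{MR2995657} when the relevant statement is in \cite{MR3795019}. Since the lemma is meant to be a pure citation, these misattributions should be corrected and the heuristic derivation of (e) either dropped or replaced by the correct induced-map argument.
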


\begin{proof}
The items $(a)-(c)$ are well known; see e.g. \cite{MR2027296,MR3795019,MR2175992}. The item $(d)$ can be found in \cite[Theorem 3.5]{MR2995657}. For the item $(e)$, see \cite[Corollary 2.10]{MR3795019} for details.
\end{proof}

\begin{rem}\label{opti}
In the case of $(d)$, Melbourne and T\"{o}r\"{o}k \cite{MR2995657} gave examples to illustrate that the $q$-th moments diverge for $q > 2\beta$. Hence the result on the order of convergent moments is essentially optimal.
\end{rem}

\begin{thm}\label{wpc}
Let $T:M\rightarrow M$ be a nonuniformly expanding map of order $p> 2$. Suppose that $v:M\rightarrow \mathbb{R}$ is a H$\ddot{o}$lder observable with $\int_M v \mathrm{d}\mu=0$. Then $\mathcal{W}_{q}(W_{n},W)\to 0$ in $C[0,1]$ for all $1\le q< 2(p-1)$.
\end{thm}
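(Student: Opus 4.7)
The plan is to invoke Proposition \ref{lid}, which tells us that $\mathcal{W}_q(W_n,W)\to 0$ on the Polish space $(C[0,1],d_C)$ is equivalent to the combination of (i) weak convergence $W_n\to_w W$ and (ii) convergence of $q$-th moments of the distance to some fixed base point. Choosing the base point to be the zero path in $C[0,1]$, condition (ii) becomes
\[
\E\Bigl[\sup_{t\in[0,1]}|W_n(t)|^q\Bigr] \longrightarrow \E\Bigl[\sup_{t\in[0,1]}|W(t)|^q\Bigr].
\]
Condition (i) is supplied directly by Lemma \ref{res}(c), so the real task is the moment convergence in (ii).

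The first key observation is that $W_n$ is the piecewise linear interpolation through the grid values $W_n(k/n)=v_k/\sqrt n$, so on each subinterval $[k/n,(k+1)/n]$ the maximum of $|W_n|$ is attained at an endpoint. Hence
\[
\sup_{t\in[0,1]}|W_n(t)| \;=\; \frac{1}{\sqrt n}\max_{0\le k\le n}|v_k|.
\]
Applying Lemma \ref{res}(e) to the right-hand side gives a bound
\[
\Bigl\|\sup_{t\in[0,1]}|W_n(t)|\Bigr\|_{L^{2(p-1)}} \;\le\; C\|v\|_\eta
\]
that is uniform in $n$. Since $q<2(p-1)$, this $L^{2(p-1)}$-boundedness of $\sup_t|W_n(t)|$ upgrades to uniform integrability of the family $\{(\sup_t|W_n(t)|)^q\}_{n\ge 1}$.

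Second, the sup-norm functional $x\mapsto\sup_t|x(t)|$ is continuous on $C[0,1]$, so the weak convergence from Lemma \ref{res}(c) together with the continuous mapping theorem yields $\sup_t|W_n(t)|\to_w \sup_t|W(t)|$. Weak convergence combined with uniform integrability implies convergence of expectations of the $q$-th power, establishing (ii) and thus the theorem via Proposition \ref{lid}.

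The main obstacle is really the procurement of the quantitative moment estimate in Lemma \ref{res}(e), which is the substantive input; once that is available, the proof is a clean packaging of the standard recipe \emph{weak convergence $+$ uniform integrability $\Rightarrow$ moment convergence} with the Wasserstein characterization of Proposition \ref{lid}. The upper threshold $q<2(p-1)$ is inherited directly from the range of $L^q$ in which the maximal partial sum can be controlled, and one should anticipate the same threshold appearing in the nonuniformly hyperbolic case treated later.
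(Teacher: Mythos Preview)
Your proposal is correct and follows essentially the same route as the paper: both combine Lemma~\ref{res}(c) for weak convergence with Lemma~\ref{res}(e) for a uniform $L^{2(p-1)}$ bound on $\sup_t|W_n(t)|$, then feed the resulting moment convergence into Proposition~\ref{lid} with base point $0$. The only cosmetic difference is that the paper packages the step ``weak convergence $+$ bounded higher moment $\Rightarrow$ convergence of lower moments'' via a direct citation to \cite[Theorem~4.5.2]{Chung}, whereas you spell it out as uniform integrability plus the continuous mapping theorem; your explicit identification $\sup_t|W_n(t)|=n^{-1/2}\max_{0\le k\le n}|v_k|$ is a nice clarification of why Lemma~\ref{res}(e) applies.
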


\begin{proof}
It follows from Lemma \ref{res}$(e)$ that $W_n$ has a finite moment of order $2(p-1)$. This together with the fact $W_n\to_{w} W$ as $n\to\infty$ in Lemma \ref{res}$(c)$ implies that for each $q<2(p-1)$
\[
\lim_{n \to \infty}\mathbf{E}\sup_{t\in [0,1]}|W_n(t)|^q=\mathbf{E}\sup_{t\in [0,1]}|W(t)|^q
\]
by \cite[Theorem 4.5.2]{Chung}. On the other hand, by the fact $W_n: M\to C[0,1]$ and the definition of push-forward measures we have
\[
\int_{C[0,1]} d_C(x,0)^q \ \rmd \mu\circ W_n^{-1}(x)=\int_M  \sup_{t\in [0,1]}|W_n(t,\omega)|^q \rmd \mu(\omega)=\mathbf{E}\sup_{t\in [0,1]}|W_n(t)|^q;
\]
similarly,
\[
\int_{C[0,1]} d_C(x,0)^q \ \rmd \mu\circ W^{-1}(x)=\mathbf{E}\sup_{t\in [0,1]}|W(t)|^q.
\]
Hence
\[
\lim_{n \to \infty}\int_{C[0,1]}d_C(x,0)^q \ \rmd \mu\circ W_n^{-1}(x)=\int_{C[0,1]}d_C(x,0)^q \ \rmd \mu\circ W^{-1}(x).
\]
By taking $\mu_n=\mu\circ W_n^{-1}, \mu=\mu\circ W^{-1}$ and $x_0=0$ in Proposition \ref{lid} and the fact $W_n\to_{w} W$ in Lemma \ref{res}$(c)$, the result follows.
\end{proof}

\begin{thm}\label{thnon}
Let $T:M\rightarrow M$ be a nonuniformly expanding map of order $p\ge 4$ and suppose that $v:M\rightarrow \mathbb{R}$ is a H$\ddot{o}$lder observable with $\int_M v \mathrm{d}\mu=0$. Then there exists a constant $C>0$ such that $\mathcal{W}_{\frac{p}{2}}(W_{n},W)\leq Cn^{-\frac{1}{4}+\frac{1}{4(p-1)}}$ for all $n\geq 1$.
\end{thm}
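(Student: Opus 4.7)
The plan is to insert a martingale interpolant between $W_n$ and $W$ and then control the two resulting Wasserstein distances by different techniques: a coboundary telescoping argument for the first, and a martingale version of the Skorokhod embedding theorem for the second, in the spirit of \cite{MR3975864} but adapted to the Wasserstein setting.

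First, I would invoke the sharp martingale--coboundary decomposition of Korepanov--Kosloff--Melbourne~\cite{MR3795019}: since $T$ has order $p\geq 4$, there exist $m,\chi\in L^{2(p-1)}(M,\mu)$ with $\int_M m\,\rmd\mu=0$ and $v=m+\chi-\chi\circ T$ $\mu$-a.e., where $(m\circ T^j)_{j\geq 0}$ is a martingale difference sequence with respect to an appropriate filtration. Define the piecewise linear martingale process
\[
M_n(t):=\frac{1}{\sqrt n}\Bigl[\sum_{j=0}^{[nt]-1}m\circ T^j+(nt-[nt])\,m\circ T^{[nt]}\Bigr],\quad t\in[0,1].
\]
The coboundary part of the Birkhoff sum telescopes, producing the pointwise bound $\sup_{t\in[0,1]}|W_n(t)-M_n(t)|\leq \tfrac{2}{\sqrt n}\max_{0\leq k\leq n}|\chi\circ T^k|$; a standard maximal estimate exploiting stationarity and $\chi\in L^{2(p-1)}$ then yields $\mathcal W_{p/2}(W_n,M_n)\leq Cn^{-1/2+1/(2(p-1))}$, which is strictly sharper than the target rate. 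Hence the bottleneck lies in bounding $\mathcal W_{p/2}(M_n,W)$.

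For this second comparison, on an enlarged probability space I would realize the martingale $\bigl(\sum_{j=0}^{k-1}m\circ T^j\bigr)_{k\geq 0}$ as $(B(\tau_k))_{k\geq 0}$ for a standard Brownian motion $B$ and an increasing sequence of stopping times $0=\tau_0\leq\tau_1\leq\cdots$ satisfying $\E(\tau_k-\tau_{k-1}\mid\mathcal G_{k-1})=\E((m\circ T^{k-1})^2\mid\mathcal G_{k-1})$ for the associated filtration $(\mathcal G_k)$, and $\tau_k-\tau_{k-1}\in L^{p-1}$ by Skorokhod's moment inequality (which converts $L^{2(p-1)}$-control on the martingale differences into $L^{p-1}$-control on the stopping-time increments). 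The Brownian reparameterization $\tilde W(t):=n^{-1/2}B(n\sigma^2 t)$ is a Brownian motion of variance $\sigma^2$, and Brownian H\"older continuity gives, up to a harmless linear-interpolation correction,
\[
\sup_{t\in[0,1]}|M_n(t)-\tilde W(t)|\;\leq\;\frac{C_\omega}{\sqrt n}\sup_{t\in[0,1]}|\tau_{[nt]}-n\sigma^2 t|^{1/2-\eps},
\]
where $C_\omega$ has finite moments of every order. The core quantitative estimate is then
\[
\Bigl\|\max_{k\leq n}|\tau_k-k\sigma^2|\Bigr\|_{L^{p/4}}\;\leq\; Cn^{1/2+1/(2(p-1))},
\]
obtained by decomposing $\tau_k-k\sigma^2$ into a martingale part (bounded by Burkholder--Davis--Gundy applied to the compensated increments $\tau_j-\tau_{j-1}-\E(\tau_j-\tau_{j-1}\mid\mathcal G_{j-1})$) and an ergodic part $\sum_j(\E((m\circ T^{j-1})^2\mid\mathcal G_{j-1})-\sigma^2)$ (controlled by applying the same martingale--coboundary machinery of \cite{MR3795019} to the $L^{p-1}$-observable $\E(m^2\mid\cdot)$). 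Taking $L^{p/2}$-norms of the previous display and using the identity $\||Y|^{1/2}\|_{L^{p/2}}=\|Y\|_{L^{p/4}}^{1/2}$ yields $\mathcal W_{p/2}(M_n,W)\leq Cn^{-1/4+1/(4(p-1))}$.

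The theorem then follows from the triangle inequality $\mathcal W_{p/2}(W_n,W)\leq \mathcal W_{p/2}(W_n,M_n)+\mathcal W_{p/2}(M_n,W)$, since the Skorokhod term dominates. The \emph{main obstacle} I anticipate is the precise moment bookkeeping along this chain: the deviations $\tau_k-k\sigma^2$ must be controlled in exactly $L^{p/4}$ so that after composition with the square-root Brownian modulus the result lives in $L^{p/2}$, and the $L^{p-1}$-regularity of $\tau_k-\tau_{k-1}$ supplied by Skorokhod's inequality is just barely enough. Any slack in this chain would push the exponent strictly above $-1/4+1/(4(p-1))$ and invalidate the stated rate.
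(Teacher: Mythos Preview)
Your high-level architecture (martingale--coboundary decomposition, then Skorokhod embedding, then Brownian H\"older regularity) matches the paper, but there are three concrete gaps that would prevent the argument from going through as written. First, for a \emph{noninvertible} expanding map the relation $\E(m\mid f^{-1}\mathcal M)=0$ does \emph{not} make $(m\circ T^j)_{j\ge0}$ a forward martingale difference sequence: the filtration $T^{-j}\mathcal M$ is decreasing, so only the reversed array $\{m\circ f^{n-i},\,f^{-(n-i)}\mathcal M\}_{1\le i\le n}$ is a genuine martingale difference sequence. The Skorokhod embedding therefore cannot be applied to your $M_n$; the paper handles this by working with the reversed process and the time-reversal map $g(u)(t)=u(1)-u(1-t)$, which is Lipschitz and satisfies $g\circ g=\mathrm{Id}$. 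Second, the KKM decomposition yields $m\in L^p(\Delta)$ and $\chi\in L^{p-1}(\Delta)$, not $L^{2(p-1)}$; consequently Skorokhod's moment inequality gives $\tau_k-\tau_{k-1}\in L^{p/2}$, not $L^{p-1}$, and your moment chain has to be reworked with $p/2$ in place of $p-1$ throughout. Third, you cannot control the ``ergodic part'' $\sum_j(\E(m^2\mid\cdot)\circ T^{j-1}-\sigma^2)$ by applying the martingale--coboundary machinery of \cite{MR3795019} to $\E(m^2\mid\cdot)$: that machinery requires H\"older observables on $M$, whereas $\E(m^2\mid f^{-1}\mathcal M)$ is merely an $L^{p/2}$ function on $\Delta$. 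The paper invokes a dedicated estimate, \cite[Proposition~4.1]{MR3975864}, to obtain $\|V_{n}-1\|_{L^{p/2}}\le Cn^{-1/2}$.

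A structural difference worth noting: the paper does not interpolate the martingale in ordinary time as you do with $M_n$, but in \emph{conditional-variance} time, defining $X_n$ by \eqref{xn}. This choice, combined with the variance-based Skorokhod coupling, yields $\mathcal W_{p/2}(X_n,B)\le Cn^{-1/4+\delta}$ for every $\delta>0$ (Lemma~\ref{xnw}), so the Skorokhod step is \emph{not} the bottleneck. The rate $n^{-1/4+1/(4(p-1))}$ actually comes from the comparison $\mathcal W_{p-1}(g\circ W_n\circ\pi_\Delta,\sigma X_n)$ (Lemma~\ref{wnxn}), which is handled not by a simple coboundary telescoping bound but via the block-maximal estimate $\|Z_n\|_{L^{2(p-1)}}\le Cn^{1/4+1/(4(p-1))}$ of Proposition~\ref{est}; this is where the exponent $\frac{1}{4(p-1)}$ genuinely originates.
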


We postpone the proof of Theorem \ref{thnon} to Section 4.
\begin{rem}\label{wqr}
(1) Since $\mathcal{W}_{q}$ $\le \mathcal{W}_{p}$ for $q\le p$, Theorem \ref{thnon} provides an estimate for $\mathcal{W}_{q}(W_{n},W)$ for all $1\le q\le p/2$, $p\ge 4$.

(2) Our result implies a convergence rate $O(n^{-\frac{1}{4}+\delta'})$ with respect to L\'{e}vy-Prokhorov distance, where $\delta'$ depends only on $p$ and $\delta'$ can be arbitrarily small as $p\to \infty$. Indeed, for two given probability measures $\mu$ and $\nu$, we have $\pi(\mu,\nu)\le \mathcal{W}_{p}(\mu,\nu)^{\frac{p}{p+1}}$; see Proposition \ref{levy}.

(3) The convergence rate in Theorem \ref{thnon} may not be optimal. However, it is well known that one cannot get a better result than $O(n^{-\frac{1}{4}})$ by means of the Skorokhod embedding theorem; see \cite{MR0324738,MR301782} for details.
\end{rem}

\begin{exam}[Pomeau-Manneville intermittent maps]\label{PMM}
A typical example of nonuniformly expanding systems with polynomial tails is the Pomeau-Manneville intermittent map \cite{MR1695915,MR576270}. Consider the map $T:[0,1]\rightarrow[0,1]$ given by
\begin{align}\label{P-M}
  T(x)=
  \begin{cases}
  x(1+2^\gamma x^\gamma) & x\in[0,\frac{1}{2}),\\
  2x-1  & x\in[\frac{1}{2},1],
  \end{cases}
\end{align}
where $\gamma\ge 0$ is a parameter. When $\gamma=0$, this is $Tx=2x $ mod $1$ which is a uniformly expanding system.
It is well known that for each $0\le \gamma<1$, there is a unique absolutely continuous invariant probability measure $\mu$.
By \cite{MR1750438}, for $0<\gamma<1$, the map can be modelled by a Young tower with tails $O(n^{-\frac{1}{\gamma}})$. Further for $\gamma\in[0,\frac{1}{2})$, the CLT and WIP hold for H\"{o}lder continuous observables.
We restrict the parameter $\gamma\in(0,\frac{1}{2})$, then the map is a nonuniformly expanding system of order $p$ for any $p<\frac{1}{\gamma}$.
By Theorem \ref{thnon} we obtain $\mathcal{W}_{\frac{p}{2}}(W_{n},W)\leq Cn^{-\frac{1}{4}+\frac{\gamma}{4(1-\gamma)}+\delta}$ for all $\gamma\in(0,\frac{1}{4})$.
\end{exam}

\begin{exam}[Viana maps]
Consider the Viana maps \cite{MR1471866} $T_\alpha:S^1\times\R\rightarrow S^1\times\R$
\[
T_\alpha(\omega, x)=(l\omega \text{~mod~} 1, a_0+\alpha\sin 2\pi \omega-x^2).
\]
Here $a_0\in(1,2)$ is chosen in such a way that $x=0$ is a preperiodic point for the map $g(x)=a_0-x^2$, and $\alpha$ is fixed sufficiently small, $l\in \N$ with $l\ge 16$. The results in \cite{MR2233699} show that any $T$ close to the map $T_\alpha$ in the $C^3$ topology, can be modelled by a Young tower with stretched exponential tails, which is a nonuniformly expanding map of order $p$ for all $p\ge 1$. Hence by Theorem \ref{thnon}, for all $p\geq 4$, $\mathcal{W}_{\frac{p}{2}}(W_{n},W)\leq Cn^{-\frac{1}{4}+\frac{1}{4(p-1)}}$.
\end{exam}

\subsection{Nonuniformly hyperbolic diffeomorphism}
In this subsection, we introduce the main results for nonuniformly hyperbolic systems in the sense of Young \cite{MR1637655, MR1750438}. In this case, we follow the argument in \cite{MR3795019,MR2175992}.

Let $T:M \rightarrow M$ be a diffeomorphism (possibly with singularities\footnote{The meaning of singularity here is in the sense of Young \cite{MR1637655}, different from that of nonuniformly expanding maps at the beginning of Section 3.1.}) defined on a Riemannian manifold $(M,d)$. As in \cite{MR2175992}, consider a subset $Y\subset M$ which has a hyperbolic product structure; that is, there exist a continuous family of unstable disks $\{W^u\}$ and a continuous family of stable disks $\{W^s\}$ such that

(1) $dim W^s+dim W^u=dim M$,

(2) each $W^u$-disk is transversal to each $W^s$-disk in a single point,

(3) $Y=(\cup W^u)\cap(\cup W^s)$.\\
For $x\in Y$, $W^s(x)$ denotes the element in $\{W^s\}$ containing $x$.

Furthermore, there is a measurable partition $\{Y_j\}$ of $Y$ such that each $Y_j$ is a union of elements in $\{W^s\}$ and a $W^u$ such that each element of $\{W^s\}$ intersects $W^u$ in one point. Defining an integrable return time $R:Y\rightarrow\mathbb{Z}^{+}$ which is constant on each partition $Y_j$, we can get the corresponding induced map $F=T^{R}:Y\rightarrow Y$. The {\em separation time} $s(x,y)$ is the greatest integer $n\ge 0$ such that $F^nx,F^ny$ lie in the same partition element of $Y$.

We assume that there exist $C>0$ and $\gamma\in(0,1)$ such that

(1) $F(W^s(x))\subset W^s(Fx)$ for all $x\in Y$;

(2) $d(T^n(x),T^n(y))\le C \gamma^n$ for all $x\in Y$, $y\in W^s(x)$ and $n\ge 0$;

(3) $d(T^n(x),T^n(y))\le C \gamma^{s(x,y)}$ for $x,y\in W^u$ and $0\le n<R$.

As for the nonuniformly expanding map, we can define a Young tower.
Let $\Delta:=\{(x,l):x\in Y, l=0,1,\ldots,R(x)-1\}$ and define an extension map $f:\Delta\rightarrow\Delta$
\[
f(x,l):=
\begin{cases}
(x,l+1) & \text{if } l+1<R(x),\\
(Fx,0) & \text{if } l+1=R(x).
\end{cases}
\]
We have a projection map $\pi_\Delta:\Delta\rightarrow M$ given by $\pi_\Delta(x,l):=T^lx$ and it is a semiconjugacy satisfying $T\circ \pi_\Delta=\pi_\Delta\circ f$.

Let $\bar{Y}=Y/\thicksim$, where $y\thicksim y'$ if $y'\in W^s(y)$; denote by $\bar{\pi}:Y\rightarrow\bar{Y}$ the natural projection. We can also obtain a partition $\{\bar{Y}_j\}$ of $\bar{Y}$, a well-defined return time $\bar{R}:\bar{Y}\rightarrow \mathbb{Z}^{+}$ and a corresponding induced map $\bar{F}:\bar{Y}\rightarrow\bar{Y}$  as in the case of $Y$. In addition, we assume that

(1)$\bar{F}|_{\bar{Y}_j}=\bar{T}^{\bar{R}(\bar{Y}_j)}:\bar{Y}_j\rightarrow\bar{Y}$ is a bijection for each $j$;

(2)$\nu_0=\frac{d\bar{\rho}}{d\bar{\rho}\circ \bar{F}}$ satisfies $|\log\nu_0(y)-\log\nu_0(y')|\le K\gamma^{s(y,y')} $, for all $y,y'\in \bar{Y}_j$, where $\bar{\rho}=\bar{\pi}_\ast\rho$ with $\rho$ being the Riemannian measure.

Let $\bar{f}:\bar{\Delta}\rightarrow\bar{\Delta}$ denote the corrosponding extension map. The projection $\bar{\pi}:Y\rightarrow\bar{Y}$ extends to the projection $\bar{\pi}:\Delta\rightarrow\bar{\Delta}$; here we use the same notation $\bar{\pi}$ which should not cause confusion. There exist an $\bar{f}$-invariant probability measure $\bar{\mu}$ on $\bar{\Delta}$ and an $f$-invariant probability measure $\mu_{\Delta}$ on $\Delta$, such that $\bar{\pi}:\Delta\rightarrow\bar{\Delta}$ and $\pi_\Delta:\Delta\rightarrow M$ are measure-preserving.

\begin{thm}
Let $T:M\rightarrow M$ be a nonuniformly hyperbolic transformation of order $p> 2$. Suppose that $v:M\rightarrow \mathbb{R}$ is a H$\ddot{o}$lder observable with $\int_M v \mathrm{d}\mu=0$. Then $\mathcal{W}_{q}(W_{n},W)\to 0$ in $C[0,1]$ for all $1\le q< 2(p-1)$.
\end{thm}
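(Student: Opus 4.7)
The plan is to mirror the proof of Theorem \ref{wpc} almost verbatim; the only non-trivial task is to verify that the two ingredients that drove that proof, namely the weak invariance principle $W_n\to_w W$ in $C[0,1]$ and the maximal moment bound
\[
\Big\|\max_{k\le n}\Big|\sum_{i=0}^{k-1}v\circ T^{i}\Big|\Big\|_{L^{2(p-1)}}\le C\|v\|_{\eta}\,n^{1/2},
\]
remain valid in the nonuniformly hyperbolic setting. Once these are in hand, the rest is a soft argument using Chung's theorem and Proposition \ref{lid}, with no hyperbolic-specific input.

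For the two ingredients I would invoke the quotient construction already set up in the excerpt. The projection $\bar\pi:\Delta\to\bar\Delta$ identifies points on the same stable disk, and $(\bar\Delta,\bar f,\bar\mu)$ is a nonuniformly expanding tower. A H\"older observable $v$ on $M$ lifts to $\tilde v=v\circ\pi_\Delta$ on $\Delta$, and by a standard stable-manifold averaging argument (as in Melbourne-Nicol \cite{MR2175992}) one writes $\tilde v=\bar v\circ\bar\pi+\chi\circ f-\chi$ with $\bar v$ dynamically H\"older on $\bar\Delta$ and $\chi$ bounded. The weak invariance principle and Birkhoff-sum moment bound for $\bar v$ on the expanding quotient then transfer to $v$ on $M$ up to a bounded coboundary $\chi\circ T^{n}-\chi$, which contributes only a lower-order term to both the WIP and the $L^{2(p-1)}$ maximal inequality. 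The maximal inequality on the quotient is [Corollary 2.10, \cite{MR3795019}], and the WIP follows from the martingale approximation set up in [Section 2, \cite{MR3795019}] together with Donsker's theorem for the martingale part; both are stated precisely for the hyperbolic setting in the references cited.

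With (i) and (ii) in place, the argument of Theorem \ref{wpc} transports word for word. The moment bound gives $\sup_{t\in[0,1]}|W_n(t)|\in L^{2(p-1)}$ uniformly in $n$, so $\{\sup_{t\in[0,1]}|W_n(t)|^q\}_n$ is uniformly integrable for every $q<2(p-1)$; combined with $W_n\to_w W$, [Theorem 4.5.2, Chung] yields
\[
\lim_{n\to\infty}\mathbf{E}\sup_{t\in[0,1]}|W_n(t)|^q=\mathbf{E}\sup_{t\in[0,1]}|W(t)|^q.
\]
Rewriting the two sides as $\int_{C[0,1]}d_C(x,0)^q\,\rmd(\mu\circ W_n^{-1})$ and $\int_{C[0,1]}d_C(x,0)^q\,\rmd(\mu\circ W^{-1})$ verifies condition (2) of Proposition \ref{lid} with $x_0=0$, while condition (1) is exactly the WIP. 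Proposition \ref{lid} then gives $\mathcal{W}_q(W_n,W)\to 0$, proving the theorem. The only real obstacle is the hyperbolic-to-expanding reduction outlined in the second paragraph, and because we only need convergence rather than a rate, the bounded coboundary $\chi$ causes no difficulty.
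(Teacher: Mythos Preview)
Your proposal is correct and follows essentially the same approach as the paper: the paper's proof simply cites \cite[Corollary 5.5]{MR3795019} for the $L^{2(p-1)}$ maximal moment bound in the hyperbolic setting (which is exactly the result you obtain via the quotient/coboundary reduction you sketch) and then says ``the remaining proof is similar to Theorem \ref{wpc}.'' Your second paragraph spells out what that citation contains, but the strategy is identical.
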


\begin{proof}
By \cite[Corollary 5.5]{MR3795019}, $W_n$ has a finite moment of order $2(p-1)$. The remaining proof is similar to Theorem \ref{wpc}.
\end{proof}

\begin{thm}\label{hyp}
Let $T:M\rightarrow M$ be a nonuniformly hyperbolic transformation of order $p\ge 4$ and suppose that $v:M\rightarrow \mathbb{R}$ is a H$\ddot{o}$lder observable with $\int_M v \mathrm{d}\mu=0$. Then there exists a constant $C>0$ such that $\mathcal{W}_{\frac{p}{2}}(W_{n},W)\leq Cn^{-\frac{1}{4}+\frac{1}{4(p-1)}}$ for all $n\geq 1$.
\end{thm}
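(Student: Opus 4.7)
The plan is to reduce from the nonuniformly hyperbolic diffeomorphism $T$ on $M$ to its quotient tower $\bar f:\bar\Delta\to\bar\Delta$, which is a nonuniformly expanding map of order $p$ in the sense of Section 3.1 (with the ambient metric played by the symbolic metric $d_\theta(x,y)=\theta^{s(x,y)}$ for a suitable $\theta\in(0,1)$). Once the reduction is in place, Theorem \ref{thnon} applies directly to $\bar f$ and produces the desired rate on the quotient side; the cost of the reduction turns out to be only $O(n^{-1/2})$, which is absorbed into the main rate $O(n^{-1/4+1/(4(p-1))})$.

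Concretely, following Melbourne-Nicol \cite{MR2175992} and Korepanov et al.\ \cite{MR3795019}, I would first produce a cohomological decomposition
\[
v\circ\pi_\Delta \;=\; \tilde v\circ\bar\pi + \chi\circ f - \chi,
\]
in which $\chi:\Delta\to\R$ is bounded and $\tilde v:\bar\Delta\to\R$ is H\"older with respect to $d_\theta$ and satisfies $\int_{\bar\Delta}\tilde v\,\rmd\bar\mu=0$. The function $\chi$ is built by the standard telescoping series obtained from averaging $v\circ T^k\circ\pi_\Delta$ along stable fibers; its convergence and uniform boundedness come from the exponential contraction on $W^s$ (assumption (2) of Section 3.2) together with H\"older regularity of $v$.

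Let $\bar W_n$ denote the continuous process associated to $\tilde v$ and $\bar f$ on $(\bar\Delta,\bar\mu)$ by the analogue of formula \eqref{exp}. Telescoping the coboundary in the Birkhoff sums (the linear-interpolation term contributes only one extra summand) gives the pathwise bound
\[
\sup_{t\in[0,1]}\bigl|W_n(t)\circ\pi_\Delta - \bar W_n(t)\circ\bar\pi\bigr| \;\le\; \frac{4\|\chi\|_\infty}{\sqrt n}.
\]
Since $\pi_\Delta$ and $\bar\pi$ are measure-preserving, the joint law of $(W_n\circ\pi_\Delta,\bar W_n\circ\bar\pi)$ under $\mu_\Delta$ is a coupling of $\P_{W_n}$ (on $M$) and $\P_{\bar W_n}$ (on $\bar\Delta$), and hence
\[
\mathcal W_{p/2}(W_n,\bar W_n) \;\le\; 4\|\chi\|_\infty\, n^{-1/2}.
\]
Applying Theorem \ref{thnon} to $\bar f$ with observable $\tilde v$ yields $\mathcal W_{p/2}(\bar W_n,W)\le C n^{-1/4+1/(4(p-1))}$, and the triangle inequality finishes the proof.

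The main obstacle is the existence and quantitative control of the decomposition: that $\tilde v$ is genuinely H\"older in $d_\theta$ on $\bar\Delta$ (so Theorem \ref{thnon} is applicable) and that $\chi$ is uniformly bounded rather than merely in some $L^q$, so the coupling error is $O(n^{-1/2})$ in the supremum metric. This is the technical heart of the hyperbolic-to-expanding reduction in \cite{MR2175992,MR3795019}; once imported, the remaining estimates reduce to the short triangle-inequality argument above.
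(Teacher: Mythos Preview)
Your proposal is correct and follows essentially the same route as the paper: the paper invokes Lemma~4.5 (its version of the Sina\u\i{} trick from \cite{MR2175992,MR3795019}) to write $v\circ\pi_\Delta=\phi\circ\bar\pi+\psi-\psi\circ f$ with $\psi\in L^\infty(\Delta)$ and $\phi$ H\"older on $\bar\Delta$, obtains the same $O(n^{-1/2})$ sup-norm bound between $W_n\circ\pi_\Delta$ and $\overline W_n\circ\bar\pi$, passes to Wasserstein distance via the measure-preserving semiconjugacies, and finishes with Theorem~\ref{thnon} and the triangle inequality. Apart from the sign convention in the coboundary and the names $\phi,\psi$ versus your $\tilde v,\chi$, the arguments are identical.
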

We postpone the proof of Theorem \ref{hyp} to the next section.


\begin{exam}[Nonuniformly expanding/hyperbolic systems with exponential tails]
In this case, the return time $R\in L^p$ for all $p$. Hence for all $p\geq 4$, $\mathcal{W}_{\frac{p}{2}}(W_{n},W)\leq Cn^{-\frac{1}{4}+\frac{1}{4(p-1)}}$. Specific examples are the following.
\begin{itemize}
 \item Some partially hyperbolic systems with a mostly contracting direction \cite{MR1919371, MR1782146}.
 \item Unimodal maps and multimodal maps as in \cite{MR3640023} for a fixed system.
 \item H\'{e}non-type attractors \cite{MR422932}. Let $T_{a,b}:\R^2\rightarrow\R^2$ be defined by $T_{a,b}(x,y)=(1-ax^2+y, bx)$ for $a<2$, $b>0$, where $b$ is small enough depending on $a$. It follows from \cite{MR1218323, MR1755436} that $T$ admits an SRB measure and $T$ can be modelled by a Young tower with exponential tails.
\end{itemize}
\end{exam}

\section{Proof of Theorem \ref{thnon} and Theorem \ref{hyp}}
\setcounter{equation}{0}

\subsection{Martingale Approximation}
The martingale approximation method \cite{MR0251785} is one of the main methods for studying statistical limit properties. In \cite{MR3795019}, Korepanov et al obtained a new version of martingale-coboundary decomposition, which is applicable to nonuniformly hyperbolic systems. In this subsection, we recall some required properties in \cite{MR3795019}.

\begin{prop}\label{dec}
Let $T:M\rightarrow M$ be a nonuniformly expanding map of order $p\ge 1$ and suppose that $v:M\rightarrow \mathbb{R}$ is a H$\ddot{o}$lder observable with $\int_M v \mathrm{d}\mu=0$. Then there is an extension $f:\Delta\rightarrow\Delta$ of $T$ such that for any $v\in C^\eta(M)$ there exist $m\in L^{p}(\Delta)$ and $\chi\in L^{p-1}(\Delta)$ with
\[
v\circ\pi_\Delta=m+\chi\circ f-\chi,\qquad \mathbf{E}(m|f^{-1}\mathcal{M})=0.
\]
Moreover, there is a constant $C>0$ such that for all $v\in C^\eta(M)$,
\[
\|m\|_{L^p}\le C\|v\|_{\eta},\qquad \|\chi\|_{L^{p-1}}\le C\|v\|_{\eta},
\]
and for $n\geq 1$,
\[
\left\|\max_{0\le j\le n}|\chi\circ f^j-\chi|\right\|_{L^p}\le C\|v\|_{\eta}n^{1/p}.
\]
\end{prop}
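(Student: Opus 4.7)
The plan is to build the decomposition on the base $Y$ of the Young tower and then spread it to the full tower $\Delta$, following the approach of Korepanov et al. Write $\tilde v:=v\circ\pi_{\Delta}$ and consider the induced observable
\[
V(y):=\sum_{l=0}^{R(y)-1}v(T^{l}y),\qquad y\in Y.
\]
By condition (3), $|V|\le|v|_{\infty}R$, so $V\in L^{p}(Y,\mu_{Y})$ with $\|V\|_{L^{p}}\le\|v\|_{\eta}\|R\|_{L^{p}}$; moreover, conditions (2) and (4) imply that $V$ has controlled regularity on each partition element $Y_{j}$.

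First, I would apply Gordin's martingale--coboundary construction on $(Y,F,\mu_{Y})$. The induced map $F=T^{R}$ enjoys exponential decay of correlations: its transfer operator $\hat F$ has a spectral gap on a suitable H\"{o}lder space, so the series $\chi_{Y}:=\sum_{n\ge 1}\hat F^{n}V$ converges geometrically there. This yields the decomposition $V=m_{Y}+\chi_{Y}\circ F-\chi_{Y}$ with $\hat F m_{Y}=0$, equivalently $\E(m_{Y}\mid F^{-1}\mathcal{M}_{Y})=0$, together with $\|m_{Y}\|_{L^{p}(Y)}\le C\|v\|_{\eta}$ and $\|\chi_{Y}\|_{L^{p-1}(Y)}\le C\|v\|_{\eta}$.

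Second, I would spread the decomposition to $\Delta$ by setting
\[
\chi(y,l):=\chi_{Y}(y)-\sum_{j=0}^{l-1}v(T^{j}y),\qquad 0\le l<R(y),
\]
and $m:=\tilde v+\chi-\chi\circ f$. A direct verification shows that $m$ is supported on the base and agrees with $m_{Y}$ there, so $\E(m\mid f^{-1}\mathcal{M})=0$ follows from the corresponding property of $m_{Y}$. The $L^{p}(\Delta)$-bound on $m$ is immediate since $m$ is supported on the base and $\mu_{\Delta}$ restricted to $Y\times\{0\}$ is a positive multiple of $\mu_{Y}$; the $L^{p-1}(\Delta)$-bound on $\chi$ combines the bound on $\chi_{Y}$ with the elementary estimate $\|l\mapsto\sum_{j<l}v\circ T^{j}\|_{L^{p-1}(\Delta)}\le|v|_{\infty}\|R\|_{L^{p-1}}\le C\|v\|_{\eta}$.

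I expect the maximal estimate $\|\max_{0\le j\le n}|\chi\circ f^{j}-\chi|\|_{L^{p}}\le C\|v\|_{\eta}n^{1/p}$ to be the main obstacle, because the target norm is $L^{p}$ while $\chi$ itself only lies in $L^{p-1}$. The Korepanov-type argument proceeds by decomposing the orbit $y,fy,\ldots,f^{n}y$ into excursions between successive returns to the base, expressing $\chi\circ f^{j}-\chi$ in terms of the induced martingale sums $\sum_{k<N}m_{Y}\circ F^{k}$, where $N=N_{j}(y)$ counts the returns before time $j$, and of partial Birkhoff sums of $v$ along a single excursion. The factor $n^{1/p}$ then emerges from combining the stationary maximal inequality $\|\max_{k\le n}|X_{k}|\|_{L^{p}}\le (n+1)^{1/p}\|X_{0}\|_{L^{p}}$ with the $L^{p}$-integrability of $R$ and of $m_{Y}$. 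Squeezing the exponent down to the sharp value $1/p$ via this return-time bookkeeping is the most delicate ingredient.
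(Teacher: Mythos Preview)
The paper's own proof of this proposition is merely a citation: ``The proposition is a summary of Propositions 2.4, 2.5 and 2.7 in \cite{MR3795019}.'' So you have in fact gone further than the paper by attempting to reconstruct the argument of Korepanov--Kosloff--Melbourne, and your overall strategy---build the Gordin decomposition for the induced map on the base, then spread it up the tower---is indeed the one used in that reference.

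Two technical points in your sketch deserve attention. First, there is a sign error in your definition of $\chi$: with $\chi(y,l)=\chi_{Y}(y)-\sum_{j<l}v(T^{j}y)$ one computes $\chi\circ f-\chi=-\tilde v$ on non-top levels, so $m=\tilde v+\chi-\chi\circ f=2\tilde v$ rather than $0$. The correct choice is $\chi(y,l)=\chi_{Y}(y)+\sum_{j<l}v(T^{j}y)$, which gives $m$ supported on the top of each column as you intend. Second, the assertion that $\hat F$ has a spectral gap on ``a suitable H\"older space'' and that $\sum_{n\ge1}\hat F^{n}V$ converges there requires care, because $V$ is not bounded (only $|V|\le|v|_\infty R$ with $R\in L^p$). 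One must work in a locally H\"older/weighted space adapted to the Gibbs--Markov structure of $F$, in which unbounded observables with controlled oscillation on partition elements are admissible; this is how \cite{MR3795019} proceeds. Your description of the maximal inequality as the delicate part, obtained by return-time bookkeeping combined with $\|R\|_{L^p}$ and the stationary maximal bound, is accurate.
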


\begin{proof}
The proposition is a summary of Propositions 2.4, 2.5 and 2.7 in \cite{MR3795019}.
\end{proof}

\begin{prop}
Fix $n\ge 1$. Then $\{m\circ f^{n-i},f^{-(n-i)}\mathcal{M};1\le i\le n\}$ is a martingale difference sequence.
\end{prop}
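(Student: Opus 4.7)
The plan is to verify the three defining properties of a martingale difference sequence in turn: (i) $\{\mathcal{F}_i := f^{-(n-i)}\mathcal{M}\}_{i=1}^n$ is an increasing filtration; (ii) $X_i := m\circ f^{n-i}$ is $\mathcal{F}_i$-measurable; (iii) $\mathbf{E}(X_i\mid\mathcal{F}_{i-1})=0$ for $i\ge 2$, and $\mathbf{E}(X_1)=0$.

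For (i), since $f$ is a measurable self-map of $\Delta$, one has $f^{-1}\mathcal{M}\subseteq\mathcal{M}$, and iterating gives $f^{-(k+1)}\mathcal{M}\subseteq f^{-k}\mathcal{M}$ for every $k\ge 0$. As $i$ runs from $1$ to $n$, the exponent $n-i$ decreases from $n-1$ to $0$, so $\mathcal{F}_i$ is indeed increasing in $i$. For (ii), $m$ is $\mathcal{M}$-measurable by Proposition 4.1, and for any Borel $B$, $\{m\circ f^{n-i}\in B\}=f^{-(n-i)}\{m\in B\}\in f^{-(n-i)}\mathcal{M}=\mathcal{F}_i$.

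The substantive step is (iii). I would first establish the elementary identity, valid because $\mu_\Delta$ is $f$-invariant: for any sub-$\sigma$-algebra $\mathcal{G}\subseteq\mathcal{M}$ and any integrable $g$,
\[
\mathbf{E}(g\circ f\mid f^{-1}\mathcal{G})=\mathbf{E}(g\mid\mathcal{G})\circ f.
\]
This follows by checking the defining property: for $A=f^{-1}B$ with $B\in\mathcal{G}$, the change-of-variable $\int_{f^{-1}B}(g\circ f)\,\rmd\mu_\Delta=\int_B g\,\rmd\mu_\Delta$ (which uses only that $f$ preserves $\mu_\Delta$) gives $\int_A(g\circ f)\,\rmd\mu_\Delta=\int_A\mathbf{E}(g\mid\mathcal{G})\circ f\,\rmd\mu_\Delta$, and $\mathbf{E}(g\mid\mathcal{G})\circ f$ is $f^{-1}\mathcal{G}$-measurable. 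Then I would prove by induction on $k\ge 0$ that
\[
\mathbf{E}(m\circ f^{k}\mid f^{-(k+1)}\mathcal{M})=0.
\]
The base case $k=0$ is precisely the hypothesis $\mathbf{E}(m\mid f^{-1}\mathcal{M})=0$ from Proposition 4.1. For the induction step, apply the displayed identity with $g=m\circ f^{k-1}$ and $\mathcal{G}=f^{-k}\mathcal{M}$: writing $f^{-(k+1)}\mathcal{M}=f^{-1}(f^{-k}\mathcal{M})$ and $m\circ f^k=(m\circ f^{k-1})\circ f$, one gets $\mathbf{E}(m\circ f^k\mid f^{-(k+1)}\mathcal{M})=\mathbf{E}(m\circ f^{k-1}\mid f^{-k}\mathcal{M})\circ f=0$ by the inductive hypothesis. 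Setting $k=n-i$ yields $\mathbf{E}(X_i\mid\mathcal{F}_{i-1})=0$ for $2\le i\le n$. Finally, $\mathbf{E}(X_1)=\mathbf{E}(m\circ f^{n-1})=\mathbf{E}(m)$ by $f$-invariance, and $\mathbf{E}(m)=0$ either by noting that $\mathbf{E}(m\mid f^{-1}\mathcal{M})=0$ forces $\mathbf{E}(m)=0$, or directly from the decomposition $v\circ\pi_\Delta=m+\chi\circ f-\chi$ together with $\int v\,\rmd\mu=0$ and the fact that $\pi_\Delta$ and $f$ are measure-preserving.

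There is no real obstacle here beyond bookkeeping; the only point that requires a moment's care is the identity $\mathbf{E}(g\circ f\mid f^{-1}\mathcal{G})=\mathbf{E}(g\mid\mathcal{G})\circ f$, which depends on $f$ being measure-preserving but \emph{not} on $f$ being invertible---this is important since the tower map $f$ in the nonuniformly expanding setting is typically non-invertible, so $f^{-1}\mathcal{M}$ is a proper sub-$\sigma$-algebra of $\mathcal{M}$ and the filtration is genuinely coarser at smaller indices.
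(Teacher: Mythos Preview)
Your proof is correct and complete. The paper itself does not give a proof but simply refers to \cite[Proposition 2.9]{MR3795019}; your argument is exactly the standard verification that reference would contain, hinging on the identity $\mathbf{E}(g\circ f\mid f^{-1}\mathcal{G})=\mathbf{E}(g\mid\mathcal{G})\circ f$ for measure-preserving $f$ together with the hypothesis $\mathbf{E}(m\mid f^{-1}\mathcal{M})=0$.
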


\begin{proof}
See for example \cite[Proposition 2.9]{MR3795019}.
\end{proof}

\begin{prop}\label{mom}
If $p\geq2$, then $\left\|\max_{k\leq n}|\sum_{i=1}^{k}m\circ f^{n-i}|\right\|_{L^{p}}\leq C\|m\|_{L^p}n^{1/2}$ for all $n\ge 1$.
\end{prop}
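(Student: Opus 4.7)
The plan is to combine Doob's maximal inequality with Burkholder's square-function inequality, exploiting the fact that by the previous proposition the partial sums $S_k := \sum_{i=1}^{k} m \circ f^{n-i}$ form a martingale with respect to the filtration $\mathcal{F}_k := f^{-(n-k)}\mathcal{M}$ (for fixed $n$, as $k$ ranges from $1$ to $n$).

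First I would apply Doob's $L^p$ maximal inequality to reduce the left-hand side to the terminal partial sum: for $p > 1$,
\[
\Bigl\|\max_{k \le n} |S_k| \Bigr\|_{L^p} \le \frac{p}{p-1}\, \|S_n\|_{L^p}.
\]
Then I would invoke Burkholder's inequality for martingales in $L^p$, $p \ge 2$, which bounds $\|S_n\|_{L^p}$ by the $L^p$-norm of the quadratic variation:
\[
\|S_n\|_{L^p} \le C_p \,\Bigl\| \Bigl( \sum_{i=1}^{n} (m \circ f^{n-i})^2 \Bigr)^{1/2} \Bigr\|_{L^p}
= C_p \,\Bigl\| \sum_{i=1}^{n} (m \circ f^{n-i})^2 \Bigr\|_{L^{p/2}}^{1/2}.
\]

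Next I would control the quadratic variation by the triangle inequality in $L^{p/2}$, valid precisely because $p \ge 2$:
\[
\Bigl\| \sum_{i=1}^{n} (m \circ f^{n-i})^2 \Bigr\|_{L^{p/2}}
\le \sum_{i=1}^{n} \bigl\| (m \circ f^{n-i})^2 \bigr\|_{L^{p/2}}
= \sum_{i=1}^{n} \|m \circ f^{n-i}\|_{L^p}^2.
\]
Since $f$ preserves the probability measure $\mu_\Delta$, each term equals $\|m\|_{L^p}^2$, and the sum is $n\|m\|_{L^p}^2$. Plugging back yields $\|S_n\|_{L^p} \le C_p\, n^{1/2}\,\|m\|_{L^p}$, and combining with the Doob estimate gives the claimed bound with a constant $C = \frac{p}{p-1}\,C_p$ depending only on $p$.

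There is no real obstacle here: the statement is a standard Burkholder/Doob estimate for martingale differences in a measure-preserving setting, and the only structural inputs are the martingale difference property (supplied by the previous proposition) and the $f$-invariance of $\mu_\Delta$. If one prefers a black-box citation, the same inequality is recorded, for instance, as \cite[Corollary 2.10]{MR3795019}, so one may alternatively write the proof as a single-line reference to that source.
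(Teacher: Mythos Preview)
Your proposal is correct and is exactly the standard Doob--Burkholder argument; the paper itself simply refers to \cite[Corollary~2.10]{MR3795019}, whose proof is precisely the one you have written out. So your approach coincides with the paper's (cited) proof.
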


\begin{proof}
See the proof in \cite[Corollary 2.10]{MR3795019}.
\end{proof}

\subsection{Proof of Theorem \ref{thnon}}

Define
\[
\zeta_{n,j}:=\frac{1}{\sqrt{n}\sigma}m\circ f^{n-j},\ \mathcal{F}_{n,j}:=f^{-(n-j)}\mathcal{M}, \quad\hbox{for } 1\le j\le n.
\]
For $1\le l\le n$, define the conditional variance
\[
V_{n,l}:=\sum_{j=1}^{l}\mathbf{E}(\zeta_{n,j}^2|\mathcal{F}_{n,j-1}).
\]
And we set $V_{n,0}=0$.

Define the stochastic process $X_n$ with sample paths in $C[0,1]$ by
\begin{eqnarray}\label{xn}
X_{n}(t):=\sum_{j=1}^{k}\zeta_{n,j}+\frac{tV_{n,n}-V_{n,k}}{V_{n,k+1}-V_{n,k}}\zeta_{n,k+1}, \qquad \textrm{if}~~~ V_{n,k}\leq tV_{n,n}<V_{n,k+1}.
\end{eqnarray}

\medskip

{\bf Step 1. Estimate of the Wasserstein distance between $X_n$ and $B$.} Let $B$ be a standard Brownian motion, i.e. $B=_d\frac1\sigma W$.

\begin{lem}\label{xnw}
Let $p\ge 4$. Then for any $\delta>0$ there exists a constant $C>0$ such that $\mathcal{W}_{\frac{p}{2}}(X_{n},B)\leq C n^{-(\frac{1}{4}-\delta)}$ for all $n\geq 1$.
\end{lem}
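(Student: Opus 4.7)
The natural approach is to use a martingale version of the Skorokhod embedding theorem, as hinted in the introduction. Applied to the martingale difference array $\{\zeta_{n,j},\mathcal{F}_{n,j}\}_{1\le j\le n}$, such an embedding furnishes, on an enlarged probability space, a standard Brownian motion $\tilde B$ and an increasing sequence of stopping times $0=T_{n,0}\le T_{n,1}\le\cdots\le T_{n,n}$ for which the partial sums $S_{n,k}:=\sum_{j\le k}\zeta_{n,j}$ agree jointly in distribution with $(\tilde B(T_{n,k}))_{0\le k\le n}$, with the crucial moment transfer $\mathbf{E}(T_{n,j}-T_{n,j-1}\mid\mathcal{F}_{n,j-1})=\mathbf{E}(\zeta_{n,j}^2\mid\mathcal{F}_{n,j-1})$ and $\mathbf{E}(|T_{n,j}-T_{n,j-1}|^r\mid\mathcal{F}_{n,j-1})\le C_r\,\mathbf{E}(|\zeta_{n,j}|^{2r}\mid\mathcal{F}_{n,j-1})$ for every $r\ge 1$.

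I would then define on the enlarged space a piecewise linear process $\tilde X_n$ that interpolates the values $\tilde B(T_{n,k})$ at the nodes $V_{n,k}/V_{n,n}$ exactly as $X_n$ interpolates $S_{n,k}$; this makes $\tilde X_n=_d X_n$, so $\mathcal{W}_{p/2}(X_n,B)\le\bigl(\mathbf{E}\sup_{t\in[0,1]}|\tilde X_n(t)-\tilde B(t)|^{p/2}\bigr)^{2/p}$. The pointwise discrepancy splits by the triangle inequality into (i) the linear--interpolation remainder $|\tilde X_n(t)-\tilde B(T_{n,k})|\le|\zeta_{n,k+1}|$, and (ii) the oscillation $|\tilde B(T_{n,k})-\tilde B(t)|$ of the Brownian motion over a window whose length is controlled, for $V_{n,k}\le tV_{n,n}<V_{n,k+1}$, by $\max_{k\le n}|T_{n,k}-V_{n,k}|+|V_{n,n}-1|+\max_{k\le n}(T_{n,k}-T_{n,k-1})$.

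Each piece can then be estimated separately. The sequence $k\mapsto T_{n,k}-V_{n,k}=\sum_{j\le k}\bigl[(T_{n,j}-T_{n,j-1})-\mathbf{E}(\zeta_{n,j}^2\mid\mathcal{F}_{n,j-1})\bigr]$ is a martingale, so Burkholder--Davis--Gundy plus the Skorokhod moment transfer reduces its $L^{p/4}$ maximum to a control of $\|\zeta_{n,j}\|_{L^{p/2}}$, which via $\|m\|_{L^p}\le C\|v\|_\eta$ from Proposition \ref{dec} and Proposition \ref{mom} yields a bound of order $n^{-1/2+O(1/p)}$. The $L^{p/2}$ deviation of $V_{n,n}$ from $1$ is handled analogously by viewing it as a time average of $\mathbf{E}(m^2\circ f^{n-j}\mid\mathcal{F}_{n,j-1})-\sigma^2$ and exploiting $\|m\|_{L^p}$ together with the martingale structure. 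The Brownian oscillation over a random window of width $O(n^{-1/2+\delta'})$ then contributes $O(n^{-1/4+\delta'/2})$ by L\'evy's modulus of continuity combined with an $L^{p/2}$ maximal inequality for $\tilde B$. Optimizing $\delta'$ in terms of $\delta$ balances the three contributions and produces $\mathcal{W}_{p/2}(X_n,B)\le Cn^{-(1/4-\delta)}$.

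The main obstacle is the maximal $L^{p/4}$ bound on $T_{n,k}-V_{n,k}$: the embedding increments have only $p/2$-finite conditional moments, so one cannot afford slack in the BDG step, and the transfer $T_{n,j}-T_{n,j-1}\leadsto\zeta_{n,j}^2$ has to be combined with the sharp $L^p$ estimate on $m$ in order to extract the $n^{-1/2}$ scaling up to the loss $n^{\delta}$. As noted in Remark \ref{wqr}(3), the $n^{-1/4}$ exponent is the intrinsic barrier of any Skorokhod-embedding approach, so no further improvement should be expected beyond absorbing the logarithmic and polynomial slack into the factor $n^{\delta}$.
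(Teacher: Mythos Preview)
Your approach is essentially the paper's: Skorokhod embedding, Burkholder control of the martingale $T_{n,k}-V_{n,k}$, the interpolation error $\max_k|\zeta_{n,k}|$, and Brownian oscillation over the resulting small random window. The paper's execution differs only in technical refinements you should adopt when writing it out: it works directly in $L^{p/2}$ (not $L^{p/4}$) for the time-martingale since $\tau_k\in L^{p/2}$ and $p/2\ge2$, giving the clean bound $Cn^{-1/2}$ rather than $n^{-1/2+O(1/p)}$; it first localizes to $\{|T_n-1|\le1\}$ so that all $T_k$ lie in $[0,2]$, and on that event replaces L\'evy's almost-sure modulus by Kolmogorov's continuity theorem, which yields an $L^p$-integrable H\"older constant $\sup_{s\neq t\in[0,2]}|B(s)-B(t)|/|s-t|^\gamma$ and hence the $L^{p/2}$ bound you need via H\"older's inequality.
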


\begin{proof}
1. Fix $n>0$. It suffices to deal with a single row of the array $\{\zeta_{n,j},\mathcal{F}_{n,j}, 1\le j\le n\}$. By the Skorokhod embedding theorem (see Theorem \ref{ske}), there exists a probability space (depending on $n$) supporting a standard Brownian motion, still denoted by $B$ which should not cause confusion, and a sequence of nonnegative random variables $\tau_1,\ldots, \tau_n$ such that for $T_i=\sum_{j=1}^{i}\tau_j$ we have $\sum_{j=1}^{i}\zeta_{n,j}=B(T_i)$ with $1\le i\le n$. Specially, we set $T_{0}=0$. Then on this probability space and for this Brownian motion, we aim to show that for any $\delta>0$ there exists a constant $C>0$ such that
\begin{align*}
\bigg\|\sup_{t\in[0,1]}|X_{n}(t)-B(t)|\bigg\|_{L^{\frac{p}{2}}}\leq Cn^{-(\frac{1}{4}-\delta)} \quad \hbox{for~all~}n\ge1.
\end{align*}
Thus the result follows from Definition \ref{was}.

For ease of exposition when there is no ambiguity, we will write $\zeta_j$ and $V_k$ instead of $\zeta_{n,j}$ and $V_{n,k}$ respectively. Then by \eqref{xn} we have
\begin{align}\label{set}
X_{n}(t)=B(T_{k})+\bigg(\frac{tV_{n}-V_{k}}{V_{k+1}-V_{k}}\bigg)\big(B(T_{k+1})-B(T_{k})\big),\quad \hbox{if}~V_{k}\leq tV_{n}<V_{k+1}.
\end{align}

2. Note that Theorem \ref{ske}(3) implies
\[
T_k-V_k=\sum_{i=1}^{k}\big(\tau_{i}-\mathbf{E}(\tau_{i}|\mathcal{B}_{i-1})\big),\quad 1\le k\le n,
\]
where $\mathcal{B}_{i}$ is the $\sigma$-field generated by all events up to $T_i$ for $1\le i\le n$. Therefore $\{T_k-V_k, \mathcal{B}_{k}, 1\le k\le n\}$ is a martingale. By the Burkholder inequality and conditional Jensen inequality, for all $p\ge 4$, we have
\begin{align*}\label{mar}
\bigg\|\max_{1\le k\le n}|T_k-V_{k}|\bigg\|_{L^{\frac{p}{2}}} &\le Cn^{\frac{1}{2}} \max_{1\le k\le n}\big\|\tau_{k}-\mathbf{E}(\tau_{k}|\mathcal{B}_{k-1})\big\|_{L^{\frac{p}{2}}}\\
&\le Cn^{\frac{1}{2}} \max_{1\le k\le n}\big\|\tau_{k}\big\|_{L^{\frac{p}{2}}}.
\end{align*}
It follows from Theorem \ref{ske}(4) that $\mathbf{E}(\tau_{k}^{p/2})\le 2\Gamma(\frac{p}{2}+1)\mathbf{E}(\zeta_{k}^{p})$ for each $k$. So
\begin{equation}\label{TkVk}
\bigg\|\max_{1\le k\le n}|T_k-V_{k}|\bigg\|_{L^{\frac{p}{2}}} \le Cn^{\frac{1}{2}} \max_{1\le k\le n}\big\|\zeta_{k}\big\|_{L^p}^2= Cn^{-\frac{1}{2}}\|m\|_{L^p}^{2}.
\end{equation}
On the other hand, it follows from \cite[Proposition 4.1]{MR3975864} that
\begin{align}\label{Vn1}
\|V_{n}-1\|_{L^{\frac{p}{2}}}\le  C n^{-\frac{1}{2}}\|v\|_{\eta}^{2}.
\end{align}

3. Based on the above estimates, by Chebyshev's inequality we have
\begin{equation}\label{eqq}
\begin{split}
\mu(|T_{n}-1|>1)&\le \mathbf{E}|T_{n}-1|^{\frac{p}{2}} \le 2^{\frac{p}{2}-1}\left\{\mathbf{E}|T_{n}-V_{n}|^{\frac{p}{2}}+\mathbf{E}|V_{n}-1|^{\frac{p}{2}}\right\}\\
&\le Cn^{-\frac{p}{4}}(\|m\|_{L^p}^{p}+\|v\|_{\eta}^{p}).\\
\end{split}
\end{equation}
According to the H\"{o}lder inequality, \eqref{eqq} and Proposition \ref{mom}, we deduce that
\begin{align*}
I:=&\bigg\| 1_{\{|T_{n}-1|>1\}}\sup_{t\in[0,1]}|X_{n}(t)-B(t)|\bigg\|_{L^{\frac{p}{2}}}\\
\le &\big(\mu(|T_{n}-1|>1)\big)^{1/p}\bigg\| \sup_{t\in[0,1]}|X_{n}(t)-B(t)|\bigg\|_{L^{p}}\\
\le &\big(\mu(|T_{n}-1|>1)\big)^{1/p}\bigg(\bigg\|\sup_{t\in[0,1]}|X_{n}(t)|\bigg\|_{L^{p}}+\bigg\|\sup_{t\in[0,1]}|B(t)|\bigg\|_{L^{p}}\bigg)\\
\le &Cn^{-\frac{1}{4}}.
\end{align*}

4. We now estimate $|X_{n}-B|$ on the set $\{|T_{n}-1|\le 1\}$:
\begin{align*}
&\bigg\| 1_{\{|T_{n}-1|\le 1\}}\sup_{t\in[0,1]}|X_{n}(t)-B(t)|\bigg\|_{L^{\frac{p}{2}}}\\
\le &\bigg\| 1_{\{|T_{n}-1|\le 1\}}\sup_{t\in[0,1]}|X_{n}(t)-B(T_{k})|\bigg\|_{L^{\frac{p}{2}}}+\bigg\|1_{\{|T_{n}-1|\le 1\}}\sup_{t\in[0,1]}|B(T_{k})-B(t)|\bigg\|_{L^{\frac{p}{2}}}\\
 =: & I_1 + I_2.
\end{align*}
For $I_1$, it follows from \eqref{set} that
\[
\sup_{t\in[0,1]}|X_{n}(t)-B(T_{k})|\le \max_{0\le k\le n-1}|B(T_{k+1})-B(T_{k})|=\max_{0\le k\le n-1}|\zeta_{k+1}|.
\]
By Theorem \ref{mpe},
\begin{align*}\label{zeta}
I_1=&\bigg\| 1_{\{|T_{n}-1|\le 1\}}\sup_{t\in[0,1]}|X_{n}(t)-B(T_{k})|\bigg\|_{L^{p}}\nonumber\\
\le &\bigg\| 1_{\{|T_{n}-1|\le 1\}}\max_{0\le k\le n-1}|\zeta_{k+1}|\bigg\|_{L^{p}}\nonumber\\
\le &\bigg\| \max_{0\le k\le n-1}|\zeta_{k+1}|\bigg\|_{L^{p}}\nonumber\\
\le &Cn^{-\left(\frac{1}{2}-\frac{1}{p}\right)}.
\end{align*}

5. We now consider $I_{2}$ on the set $\{|T_{n}-1|\le 1\}$. Take $p_1>p$, then it is well known that
\begin{equation}\label{bts}
\mathbf{E}|B(t)-B(s)|^{p_1}\le c|t-s|^{\frac{p_1}{2}}, \quad \text{for~ all~} s,t\in [0,2].
\end{equation}
So it follows from Kolmogorov's continuity theorem that for each $0<\gamma<\frac{1}{2}-\frac{1}{p_1}$, the process $B(\cdot)$ admits a version, still denoted by $B$, such that for almost all $\omega$ the sample path $t\mapsto B(t,\omega)$ is H\"{o}lder continuous with exponent $\gamma$ and
\begin{equation*}
\bigg\|\sup_{s,t\in[0,2]\atop s\neq t}\frac{|B(s)-B(t)|}{|s-t|^{\gamma}}\bigg\|_{L^{p_1}}< \infty.
\end{equation*}
In particular,
\begin{equation}\label{holder}
\bigg\|\sup_{s,t\in[0,2]\atop s\neq t}\frac{|B(s)-B(t)|}{|s-t|^{\gamma}}\bigg\|_{L^{p}}< \infty.
\end{equation}

As for $|T_{k}-t|$, we have
\begin{align*}
\sup_{t\in[0,1]}|T_{k}-t|\le &\max_{0\le k\le n-1}\sup_{t\in[\frac{V_{k}}{V_{n}},\frac{V_{k+1}}{V_{n}})}|T_{k}-t|\\
\le &\max_{0\le k\le n-1}\bigg|T_{k}-\frac{V_{k}}{V_{n}}\bigg|+\max_{0\le k\le n-1}\sup_{t\in[\frac{V_{k}}{V_{n}},\frac{V_{k+1}}{V_{n}})}\bigg|\frac{V_{k}}{V_{n}}-t\bigg|\\
\le &\max_{0\le k\le n}\bigg|T_{k}-\frac{V_{k}}{V_{n}}\bigg|+\max_{0\le k\le n-1}\left|\frac{V_{k+1}}{V_{n}}-\frac{V_{k}}{V_{n}}\right|\\
\le &\max_{0\le k\le n}\big|T_{k}-V_{k}\big|+\max_{0\le k\le n}\bigg|V_{k}-\frac{V_{k}}{V_{n}}\bigg|+\max_{0\le k\le n-1}\left|\frac{V_{k+1}}{V_{n}}-V_{k+1}\right|\\
&\quad +\max_{0\le k\le n-1}\left|V_{k+1}-V_{k}\right|+\max_{0\le k\le n-1}\left|V_{k}-\frac{V_{k}}{V_{n}}\right|\\
\le &\max_{0\le k\le n}\big|T_{k}-V_{k}\big| + 3\max_{0\le k\le n}\left|V_{k}-\frac{V_{k}}{V_{n}}\right| +\max_{0\le k\le n-1}\big|V_{k+1}-V_{k}\big|.
\end{align*}
Note that $T_{0}=V_{0}=0$ and $\gamma\le1$, so
\[
\sup_{t\in[0,1]}|T_{k}-t|^{\gamma}\le \max_{1\le k\le n}\left|T_{k}-V_{k}\right|^{\gamma} + 3^\gamma\max_{1\le k\le n}\left|V_{k}-\frac{V_{k}}{V_{n}}\right|^{\gamma} +\max_{0\le k\le n-1}\left|V_{k+1}-V_{k}\right|^{\gamma}.
\]
Hence we have
\begin{align}
&\bigg\|\sup_{t\in[0,1]}|T_{k}-t|^{\gamma}\bigg\|_{L^{p}}\nonumber\\
\le & \bigg\|\max_{1\le k\le n}\big|T_{k}-V_{k}\big|\bigg\|_{L^{\gamma p}}^{\gamma} +3^\gamma \bigg\|\max_{1\le k\le n}\big|V_{k}-\frac{V_{k}}{V_{n}}\big|\bigg\|_ {L^{\gamma p}}^{\gamma}
 +\bigg\|\max_{0\le k\le n-1}\big|V_{k+1}-V_{k}\big|\bigg\|_ {L^{\gamma p}}^{\gamma}.
\end{align}
For the first term, since $\gamma< \frac{1}{2}$, it follows from \eqref{TkVk} that
\begin{equation}\label{Tgam}
\bigg\|\max_{1\le k\le n}|T_{k}-V_{k}|\bigg\|_{L^{\gamma p}}^{\gamma}\le Cn^{-\frac{\gamma}{2}}.
\end{equation}
For the second term, since $|V_{k}-\frac{V_{k}}{V_{n}}|=V_k|1-\frac1{V_n}|$, we have
\[\max_{1\le k\le n}\bigg|V_{k}-\frac{V_{k}}{V_{n}}\bigg|=V_n \bigg|1-\frac1{V_n}\bigg|=|V_{n}-1|.\]
Hence by \eqref{Vn1},
\begin{align}\label{Vgam}
\bigg\|\max_{1\le k\le n}\big|V_{k}-\frac{V_{k}}{V_{n}}\big|\bigg\|_ {L^{\gamma p}}^{\gamma}
=\|V_{n}-1\|_{L^{\gamma p}}^{\gamma}\le Cn^{-\frac{\gamma}{2}}.
\end{align}
As for the last term, note that $|V_{k}-V_{k-1}|=\mathbf{E}(\zeta_{k}^2|\mathcal{F}_{k-1})
=\mathbf{E}\big(\frac{1}{n\sigma^2}m^2|f^{-1}\mathcal{M}\big)\circ f^{n-k}$ for all $1\le k\le n$.
So we have
\begin{align}\label{mgam}
\bigg\|\max_{0\le k\le n-1}\big|V_{k+1}-V_{k}\big|\bigg\|_ {L^{\gamma p}}^{\gamma}=\bigg\|\max_{1\le k\le n}\big|\mathbf{E}\big(\frac{m^2}{n\sigma^2}|f^{-1}\mathcal{M}\big)\circ f^{n-k}\big|\bigg\|_{L^{\gamma p}}^{\gamma}
\le Cn^{-\left(\gamma-\frac{2\gamma}{p}\right)},
\end{align}
where the inequality follows from Theorem \ref{mpe}.

Based on the above estimates \eqref{Tgam}--\eqref{mgam}, we have
\begin{align}\label{te}
\bigg\|\sup_{t\in[0,1]}|T_{k}-t|^{\gamma}\bigg\|_{L^{p}}
\le C\bigg(n^{-\frac{\gamma}{2}}+n^{-\left(\gamma-\frac{2\gamma}{p}\right)}\bigg)
\le C n^{-\frac{\gamma}{2}},
\end{align}
where the last inequality holds since
$\gamma< \frac{1}{2}$, $1-\frac{2}{p}\ge\frac{1}{2}$.

On the set $\{|T_{n}-1|\le 1\}$, note that
\[
\sup_{t\in[0,1]}|B(T_{k})-B(t)|\le \bigg[\sup_{s,t\in[0,2]\atop s\neq t}\frac{|B(s)-B(t)|}{|s-t|^{\gamma}}\bigg]\bigg[\sup_{t\in[0,1]}|T_{k}-t|^{\gamma}\bigg].
\]
Since $0<\gamma<\frac{1}{2}-\frac{1}{p_1}$, by the H\"{o}lder inequality and \eqref{holder}, \eqref{te}
\begin{align*}
I_2=&\bigg\|1_{\{|T_{n}-1|\le 1\}}\sup_{t\in[0,1]}|B(T_{k})-B(t)|\bigg\|_{L^{\frac{p}{2}}}\\
\le &\bigg\|\bigg[\sup_{s,t\in[0,2]\atop s\neq t}\frac{|B(s)-B(t)|}{|s-t|^{\gamma}}\bigg]\bigg[\sup_{t\in[0,1]}|T_{k}-t|^{\gamma}\bigg]\bigg\|_{L^{\frac{p}{2}}}\\
\le &\bigg\|\sup_{s,t\in[0,2]\atop s\neq t}\frac{|B(s)-B(t)|}{|s-t|^{\gamma}}\bigg\|_{L^{p}}\bigg\|\sup_{t\in[0,1]}|T_{k}-t|^{\gamma}\bigg\|_{L^{p}}\\
\le & C n^{-\frac{\gamma}{2}}.
\end{align*}
Note that $p_1$ can be taken arbitrarily large in \eqref{bts}, which implies that $\gamma$ can be chosen sufficiently close to $\frac{1}{2}$. So for any $\delta>0$, we can choose $p_1$ large enough such that $I_2\le Cn^{-\frac{1}{4}+\delta}$. The result now follows from the above estimates for $I,I_1$ and $I_2$.
\end{proof}

{\bf Step 2. Estimate of the convergence rate between $W_n$ and $X_n$.} The proof is almost identical to that in \cite[Section 4.1]{MR3975864}, so we only sketch the proof.

\begin{prop}\cite[Proposition 4.6]{MR3975864}\label{est}
For $n\ge 1$, define $Z_n:=\max_{0\le i,l\le\sqrt{n}}\left|\sum_{j=i\sqrt{n}}^{i\sqrt{n}+l-1}v\circ T^j\right|.$
Then

$(a)$ $|\sum_{j=a}^{b-1}v\circ T^j|\le Z_n((b-a)(n^{\frac{1}{2}}-1)^{-1}+3)$ for all $0\le a<b \le n$.

$(b)$ $\|Z_n\|_{L^{2(p-1)}}\le C\|v\|_{\eta}n^{\frac{1}{4}+\frac{1}{4(p-1)}}$ for all $n\ge 1$.
\end{prop}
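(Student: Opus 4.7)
The plan is to prove both parts by a block-decomposition strategy, with blocks of length $N:=\lfloor\sqrt{n}\rfloor$, combined with the maximal moment bound in Lemma~\ref{res}(e) and the $T$-invariance of $\mu$. Neither part involves any deep ingredient; (b) is essentially a union bound, and (a) is elementary bookkeeping.

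For part (a), given $0\le a<b\le n$, write $i_a:=\lfloor a/N\rfloor$, $i_b:=\lfloor b/N\rfloor$, and decompose the range $[a,b)$ as the union of a ``left partial block'' $[a,(i_a+1)N)$, the interior full blocks $[iN,(i+1)N)$ for $i_a<i<i_b$, and a ``right partial block'' $[i_bN,b)$. The right partial block is precisely of the form appearing in the definition of $Z_n$, so its contribution is bounded by $Z_n$. The left partial block equals the difference between the full block $[i_aN,(i_a+1)N)$ and $[i_aN,a)$, each of which is again of the form defining $Z_n$, so its contribution is bounded by $2Z_n$ by the triangle inequality. Each interior full block contributes at most $Z_n$ (take $l=N$), and their number is at most $(b-a)/N\le (b-a)/(\sqrt{n}-1)$. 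Summing these three contributions yields the claimed factor $(b-a)(\sqrt{n}-1)^{-1}+3$.

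For part (b), I would exploit that $Z_n=\max_{0\le i\le\sqrt n}A_i$ where
\[
A_i:=\max_{0\le l\le\sqrt n}\Bigl|\sum_{j=i\sqrt n}^{i\sqrt n+l-1}v\circ T^j\Bigr|,
\]
together with the elementary inequality $(\max_i A_i)^{2(p-1)}\le\sum_i A_i^{2(p-1)}$. By $T$-invariance, $A_i=A_0\circ T^{i\sqrt n}$, so each $A_i$ has the same distribution as $A_0$; and $A_0$ is precisely the quantity bounded by Lemma~\ref{res}(e) applied to partial sums of length $\lfloor\sqrt n\rfloor$, giving $\|A_0\|_{L^{2(p-1)}}\le C\|v\|_\eta n^{1/4}$. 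Integrating the elementary inequality over $M$ and taking the $(2(p-1))$-th root produces
\[
\|Z_n\|_{L^{2(p-1)}}\le C\|v\|_\eta (\sqrt{n}+1)^{1/(2(p-1))}n^{1/4}\le C\|v\|_\eta n^{1/4+1/(4(p-1))},
\]
since $(\sqrt{n}+1)^{1/(2(p-1))}\le Cn^{1/(4(p-1))}$. The only step meriting care is the bookkeeping of integer parts and block counts in (a); everything else is routine once Lemma~\ref{res}(e) is in hand.
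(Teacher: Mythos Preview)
Your argument is correct. The paper itself does not prove this proposition; it merely quotes it from Antoniou--Melbourne \cite{MR3975864}, so there is no ``paper's own proof'' to compare against. Your block decomposition for (a) and the union-bound/invariance argument for (b) are exactly the standard route and match what one finds in the cited reference: in (a) the interval $[a,b)$ is split into at most $(b-a)/(\sqrt n-1)$ full blocks plus boundary pieces costing at most $3Z_n$, and in (b) one uses $\mathbf E Z_n^{2(p-1)}\le\sum_i\mathbf E A_i^{2(p-1)}=(\lfloor\sqrt n\rfloor+1)\,\mathbf E A_0^{2(p-1)}$ together with Lemma~\ref{res}(e) applied at scale $\sqrt n$.
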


\vskip3mm

Define a continuous transformation $g:C[0,1]\rightarrow C[0,1]$ by $g(u)(t):=u(1)-u(1-t)$.

\begin{lem}\label{wnxn}
Let $p>2$. Then there exists a constant $C>0$ such that $\mathcal{W}_{p-1}(g\circ W_{n}\circ \pi_\Delta,\sigma X_{n})\leq Cn^{-\frac{1}{4}+\frac{1}{4(p-1)}}$ for all $n\geq 1$, recalling that $\pi_\Delta:\Delta\to M$
is the projection map.
\end{lem}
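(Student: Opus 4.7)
The plan is to bound the $L^{p-1}$ norm of $\sup_{t\in[0,1]}|g(W_n\circ\pi_\Delta)(t)-\sigma X_n(t)|$ on the extension space $(\Delta,\mu_\Delta)$; by Definition \ref{was} this gives an admissible coupling and hence controls $\mathcal{W}_{p-1}(g\circ W_n\circ\pi_\Delta,\sigma X_n)$.

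First I would apply the martingale-coboundary decomposition from Proposition \ref{dec} to rewrite the Birkhoff sums of $v\circ T^j\circ\pi_\Delta$ inside $W_n\circ\pi_\Delta$ as sums of $m\circ f^j$ plus the telescoping error $\chi\circ f^k-\chi$. The $g$-transform reverses time, so the substitution $j\mapsto n-j$ converts the relevant partial sums into $\sum_{j=1}^{k}m\circ f^{n-j}=\sqrt{n}\,\sigma\sum_{j=1}^{k}\zeta_{n,j}$, which matches the piecewise-linear skeleton of $\sigma X_n$ exactly, modulo the internal time change. The coboundary contribution is bounded by the last estimate in Proposition \ref{dec}, and after dividing by $\sqrt n$ gives an $O(n^{-1/2+1/p})$ error in $L^{p-1}$, comfortably below the target rate.

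Next I would handle the parametrization mismatch: $g(W_n\circ\pi_\Delta)$ interpolates linearly between clock times $k/n$, while $\sigma X_n$ interpolates linearly between the rescaled times $V_{n,k}/V_{n,n}$. For each $t$, the two clocks pick out indices that can differ by up to $O(\sqrt n)$, and the corresponding discrepancy is a short Birkhoff sum controlled by Proposition \ref{est}(a) as a small multiple of $Z_n$ times an index-count factor. The index-count factor is in turn controlled through the estimate $\|V_{n,n}-1\|_{L^{p/2}}=O(n^{-1/2})$ already used in Step 1. Assembling the pieces and dividing by $\sqrt n$, Proposition \ref{est}(b) with $\|Z_n\|_{L^{2(p-1)}}\le C\|v\|_\eta n^{1/4+1/(4(p-1))}$ produces the bound $O(n^{-1/4+1/(4(p-1))})$ in $L^{p-1}$, using the embedding $L^{2(p-1)}\hookrightarrow L^{p-1}$ on the probability space $(\Delta,\mu_\Delta)$.

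The main obstacle is the bookkeeping of three different time parametrizations (clock time $k/n$ for $W_n$, the rescaled time $V_{n,k}/V_{n,n}$ for $X_n$, and the reversal induced by $g$) together with the jumps arising from the integer part in the definition \eqref{exp} of $W_n$. The strength of the approach is that Proposition \ref{est}(a) collapses all of these discrepancies into oscillation bounds governed by the single quantity $Z_n$, whose $L^{2(p-1)}$ growth matches the target rate precisely. Since the argument parallels \cite[Section 4.1]{MR3975864} with $L^{p-1}$ playing the role of the L\'evy--Prokhorov bound there, the remainder is a routine triangle-inequality assembly of the error terms identified above.
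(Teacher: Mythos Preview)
Your proposal is correct and follows essentially the same approach as the paper: bound $\mathcal{W}_{p-1}$ by the $L^{p-1}$ norm of $\sup_{t\in[0,1]}|g\circ W_n(t)\circ\pi_\Delta - \sigma X_n(t)|$ on $(\Delta,\mu_\Delta)$, then carry out the argument of \cite[Lemma~4.7]{MR3975864} using the martingale--coboundary decomposition of Proposition~\ref{dec} together with the oscillation bound of Proposition~\ref{est}. The paper's own proof is a one-line sketch citing the same source, so your outline in fact contains more detail than the paper provides.
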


\begin{proof}
Since $\mathcal{W}_{p-1}(g\circ W_{n}\circ \pi_\Delta,\sigma X_{n})\le \big\|\sup_{t\in[0,1]}|g\circ W_{n}(t)\circ \pi_\Delta-\sigma X_n(t)|\big\|_{L^{p-1}}$, following the proof of \cite[Lemma 4.7]{MR3975864}, we can obtain the conclusion.
\end{proof}

\begin{proof}[Proof of Theorem \ref{thnon}]
Note that $g\circ g= Id$ and $g$ is Lipschitz with Lip$g$ $\le 2$. It follows from Proposition \ref{LIP} that
\[
\mathcal{W}_{\frac{p}{2}}(W_{n},W)= \mathcal{W}_{\frac{p}{2}}(g(g\circ W_{n}),g(g\circ W))\le 2\mathcal{W}_{\frac{p}{2}}(g\circ W_{n},g\circ W).
\]
Since $\pi_\Delta$ is a semiconjugacy, $W_n \circ \pi_\Delta=_d W_n$. And $g(W)=_d W=_d\sigma B$. By Lemmas \ref{xnw} and \ref{wnxn}, for $p\ge 4$ we have
\begin{align*}
\mathcal{W}_{\frac{p}{2}}(g\circ W_{n},g\circ W)&=\mathcal{W}_{\frac{p}{2}}(g\circ W_{n}\circ \pi_\Delta,W)\\
&\le \mathcal{W}_{\frac{p}{2}}(g\circ W_{n}\circ \pi_\Delta,\sigma X_n)+\mathcal{W}_{\frac{p}{2}}(\sigma X_n ,\sigma B)\\
&\le Cn^{-\frac{1}{4}+\frac{1}{4(p-1)}}+Cn^{-\frac{1}{4}+\delta}\le Cn^{-\frac{1}{4}+\frac{1}{4(p-1)}},
\end{align*}
where the last inequality holds because $\delta>0$ can be taken arbitrarily small.
\end{proof}

\subsection{Proof of Theorem \ref{hyp}}
The proof is based on the following Lemma \ref{nht} which is presented in detail in \cite[Section 5]{MR3795019}.
\begin{lem}\label{nht}
Let $p\ge 1$, $\eta\in(0,1]$. Suppose that $T:M\rightarrow M$ is a nonuniformly hyperbolic transformation with the return time $R\in L^p$ and $v:M\rightarrow \mathbb{R}$ is a H$\ddot{o}$lder observable. Then

(1) $\bar{f}:\bar{\Delta}\rightarrow\bar{\Delta}$ is a nonuniformly expanding map of order p;

(2) there exists $\theta\in(0,1)$ such that for all $v\in C^\eta(M)$ , there exist $\phi\in C^\theta(\bar{\Delta})$ and $\psi\in L^\infty(\Delta)$ such that $v\circ \pi_\Delta=\phi\circ \bar{\pi}+\psi-\psi\circ f$. Moreover, $|\psi|_\infty\le C\|v\|_\eta$, $\|\phi\|_\theta\le C\|v\|_\eta$.
\end{lem}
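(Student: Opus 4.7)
The plan is to prove (1) and (2) separately via the standard quotient-and-stable-coboundary machinery for nonuniformly hyperbolic towers. Part (1) is a verification that $\bar f:\bar\Delta\to\bar\Delta$, equipped with a symbolic metric, fits the definition of a nonuniformly expanding map from Section 3.1. Part (2) is the classical stable-holonomy decomposition of a H\"older observable on $M$, producing an $L^\infty$ coboundary $\psi$ and a function $\phi$ that is constant along stable leaves (hence descends to $\bar\Delta$) and whose H\"older norm is controlled by $\|v\|_\eta$.

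For (1), I would equip $\bar Y$ with the symbolic metric $\bar d(\bar x,\bar y):=\gamma^{s(\bar x,\bar y)}$, and extend it to $\bar\Delta$ so that each level $\{(\cdot,l)\}$ is an isometric copy of the appropriate subset of $\bar Y$. The four axioms of a nonuniformly expanding map, applied to $\bar F:\bar Y_j\to\bar Y$, then follow almost formally: (i) bijectivity is one of the standing hypotheses on $\bar F$; (ii) for $\bar x,\bar y\in\bar Y_j$, the identity $s(\bar F\bar x,\bar F\bar y)=s(\bar x,\bar y)-1$ gives $\bar d(\bar F\bar x,\bar F\bar y)=\gamma^{-1}\bar d(\bar x,\bar y)$, so $\lambda=\gamma^{-1}>1$; (iii) the intermediate-orbit bound $\bar d(\bar f^l\bar x,\bar f^l\bar y)\le C\bar d(\bar F\bar x,\bar F\bar y)$ for $0\le l<\bar R(\bar Y_j)$ is immediate from the tower-metric construction; (iv) the distortion hypothesis $|\log\nu_0(\bar x)-\log\nu_0(\bar y)|\le K\gamma^{s(\bar x,\bar y)}=K\bar d(\bar x,\bar y)$ is exactly the required H\"older condition with exponent $1$. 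Integrability of $\bar R$ in $L^p$ follows from $R\in L^p(\mu_Y)$ together with the fact that $\bar\pi$ is measure-preserving and $\bar R$ agrees with $R$ under this identification.

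For (2), for each $j$ fix a reference unstable disk $W_j^u\subset Y_j$, and for $y\in Y_j$ let $\hat y$ denote the unique point of $W^s(y)\cap W_j^u$; extend to $\Delta$ by $\widehat{(y,l)}:=(\hat y,l)$. Define
\[
\psi(y,l):=\sum_{n=0}^{\infty}\bigl[v\circ\pi_\Delta\circ f^n(y,l)-v\circ\pi_\Delta\circ f^n(\hat y,l)\bigr].
\]
The stable-contraction assumption $d(T^m y,T^m\hat y)\le C\gamma^m$ combined with H\"older continuity of $v$ bounds each summand by $C\|v\|_\eta\gamma^{m(n,l)\eta}$ with $m(n,l)$ growing linearly in $n$, so the series converges uniformly and $|\psi|_\infty\le C\|v\|_\eta/(1-\gamma^\eta)$. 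Setting $\phi:=v\circ\pi_\Delta+\psi\circ f-\psi$, a direct check shows $\phi$ depends only on the $W^s$-class of its argument, so it descends to a well-defined function on $\bar\Delta$ satisfying $v\circ\pi_\Delta=\phi\circ\bar\pi+\psi-\psi\circ f$.

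It remains to estimate $\|\phi\|_\theta$ for an appropriate $\theta\in(0,1)$. Given $\bar x,\bar y\in\bar\Delta$ with separation time $N=s(\bar x,\bar y)$, lift them to $x,y$ in a common unstable leaf; the hyperbolic assumptions give $d(T^m x,T^m y)\le C\gamma^N$ for all $0\le m<R$, and similarly along subsequent returns before the separation time is exhausted. Splitting the telescoping series defining $\psi(x,l)-\psi(y,l)$ at the first index where the orbits of $F^k x$ and $F^k y$ decouple, I would bound the "coupled" part by H\"older continuity of $v$ (yielding a factor $(\gamma^N)^\eta$) and the "tail" by the geometric summation $\sum\gamma^{m\eta}$, ending with $|\phi(\bar x)-\phi(\bar y)|\le C\|v\|_\eta\gamma^{\eta N}=C\|v\|_\eta\bar d(\bar x,\bar y)^\eta$, so $\theta:=\gamma^\eta$ works. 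The main technical obstacle is precisely this H\"older estimate: one must juggle the stable contraction that defines $\psi$, the $\gamma^{s(x,y)}$-closeness of orbits on a common unstable leaf, and the geometric tail of the series, and merge them into a single clean bound that scales like $\gamma^{\eta N}$. Everything else in the argument is routine bookkeeping once this is organized carefully, e.g. by partitioning the summation range according to the first return time at which the orbits fall into different partition elements.
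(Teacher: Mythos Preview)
The paper does not prove this lemma; it simply records that the result ``is presented in detail in \cite[Section~5]{MR3795019}'' and uses it as a black box. Your sketch reproduces the standard symbolic-metric verification for (1) and the Sinai stable-coboundary construction for (2) that one finds in that reference (and in Young, Melbourne--Nicol), so in substance you are aligned with what the paper invokes.

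One slip: the conclusion ``$\theta:=\gamma^\eta$ works'' is inconsistent with your own computation. From $|\phi(\bar x)-\phi(\bar y)|\le C\|v\|_\eta\,\bar d(\bar x,\bar y)^\eta$ with $\bar d=\gamma^{s(\cdot,\cdot)}$ one reads off $\phi\in C^\eta(\bar\Delta)$, i.e.\ $\theta=\eta$. The number $\gamma^\eta$ would only enter if you were instead choosing the \emph{metric parameter} on $\bar\Delta$ so as to make $\phi$ Lipschitz, which is a different convention from both your setup and the lemma's statement.

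More substantively, even $\theta=\eta$ is optimistic as stated. Your claim that the ``coupled'' block of $\psi(x)-\psi(y)$ already carries a factor $(\gamma^N)^\eta$ does not survive the bookkeeping: after $k$ returns the unstable-closeness bound is only $d(\pi_\Delta f^n x,\pi_\Delta f^n y)\le C\gamma^{N-k}$, and summing $\gamma^{\eta(N-k)}$ over $k=0,\dots,N-1$ (weighted by the variable return times) gives a quantity that need not decay in $N$. The standard remedy is to split at $n\approx N/2$, using unstable closeness for small $n$ and stable contraction $|A_n|\le C\gamma^{\eta n}$ for large $n$; this yields $|\phi(\bar x)-\phi(\bar y)|\le CN\gamma^{\eta N/2}$, hence any $\theta<\eta/2$. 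Since the lemma only asks for \emph{some} $\theta\in(0,1)$ this does not affect the conclusion, but the mechanism is not quite the one you described.
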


\begin{proof}[Proof of Theorem \ref{hyp}]
As the definition of $W_n$ in \eqref{exp}, define $\overline{W}_n(t):=\frac{1}{\sqrt{n}}\sum_{j=0}^{nt-1}\phi\circ \bar{f}^j$ for $t=\frac{j}{n}, 1\le j\le n$, and linearly interpolate to obtain the process $\overline{W}_n\in C[0,1]$, where $\phi$ is from Lemma \ref{nht}. By Lemma \ref{nht}, we have $|\overline{W}_n(t)\circ \bar{\pi}-W_n\circ \pi_\Delta|_\infty\le Cn^{-\frac{1}{2}}|\psi|_\infty$ by simple computations.
Since $\bar{\pi}, \pi_\Delta$ are semiconjugacies, $\mathcal{W}_{\frac{p}{2}}(\overline{W}_n,W_{n})=\mathcal{W}_{\frac{p}{2}}(\overline{W}_n\circ \bar{\pi},W_n\circ\pi_\Delta)\le Cn^{-\frac{1}{2}}$. It follows from Lemma \ref{nht} that $\bar{f}$ is a nonuniformly expanding map of order $p$ and $\phi$ is a H\"{o}lder continuous observable with $\int_{\bar{\Delta}} \phi \mathrm{d}\bar{\mu}=0$. By Theorem \ref{thnon}, for $p\ge 4$, $\mathcal{W}_{\frac{p}{2}}(\overline{W}_{n},W)\le Cn^{-\frac{1}{4}+\frac{1}{4(p-1)}}$. Hence $\mathcal{W}_{\frac{p}{2}}(W_{n},W)\le \mathcal{W}_{\frac{p}{2}}(W_{n},\overline{W}_n)+ \mathcal{W}_{\frac{p}{2}}(\overline{W}_{n},W)\le Cn^{-\frac{1}{4}+\frac{1}{4(p-1)}}$.
\end{proof}

\section{Nonuniformly expanding/hyperbolic flows}
\setcounter{equation}{0}

\subsection{Nonuniformly expanding semiflows}
Suppose that $T:M\to M$ is a nonuniformly expanding map of order $p>2$ with ergodic invariant probability measure $\mu$. Let $r:M\to \R^{+}$ be an integrable roof function with $\bar{r}=\int_M r\rmd \mu$. We assume that $\inf r>0$. Define the suspension $M^r=\{(x,u)\in M\times\R:0\le u\le r(x)\}/\thicksim$, where $(x,r(x))$ is identified with $(Tx,0)$. The suspension semiflow $\phi_t:M^r\to M^r$ is given by $\phi_t(x,u)=(x,u+t)$ computed modulo identifications. We call $\phi_t:M^r\to M^r$ a nonuniformly expanding semiflow. Then $\mu^r=(\mu\times l)/\bar{r}$ is an ergodic invariant probability measure for $\phi_t$, where $l$ denotes the Lebesgue measure on the real line.

Now suppose that $v:M^r\to \R$ is a H\"older observable with $\int_{M^r} v \rmd \mu^r=0$. Here $v:M^r\to \R$ is H\"older if $v$ is bounded and $\sup_{(x,u)\neq(y,u)}|v(x,u)-v(y,u)|/d(x,y)^\eta<\infty$ for $\eta\in(0,1]$. Define the continuous processes $W_n\in C[0,1]$,
\begin{align}
W_n(t):=\frac{1}{\sqrt{n}}\int_0^{nt}v\circ\phi_s\rmd s, \quad t\in[0,1].
\end{align}
Assume moreover that the roof function $r:M\to \R^+$ is H\"{o}lder. It follows from \cite[Corollary 2.12]{MR2175992} that the WIP holds for nonuniformly expanding semiflows; that is, $W_n\to_{w}W$ in $C[0,1]$, where $W$ is a Brownian motion with mean zero and variance $\sigma^2\ge 0$.

In the following two theorems, we give the Wasserstein convergence result for $W_n$ and the convergence rate.

\begin{thm}\label{flow}
Let $\phi_t:M^r\to M^r$ be a nonuniformly expanding semiflow. Suppose that the return time $R\in L^p(Y)$ for $p> 2$ and the roof function $r:M\to \R^+$ is H$\ddot{o}$lder. Let $v:M^r\to \R$ be a H\"older observable with $\int_{M^r} v \rmd \mu^r=0$. Then $\mathcal{W}_{q}(W_{n},W)\to 0$ in $C[0,1]$ for all $1\le q< 2(p-1)$.
\end{thm}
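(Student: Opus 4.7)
The proof will follow exactly the same pattern as Theorem~\ref{wpc}. The WIP $W_n \to_w W$ in $C[0,1]$ is already available from \cite[Corollary~2.12]{MR2175992}, so the only missing ingredient is a uniform moment bound
\[
\bigg\|\sup_{t\in[0,1]}|W_n(t)|\bigg\|_{L^{2(p-1)}} \le C, \qquad n\ge 1.
\]
Once this is established, the family $\{\sup_t|W_n(t)|^q\}_n$ is uniformly integrable for every $q<2(p-1)$, so Theorem~4.5.2 of \cite{Chung} gives $\E\sup_t|W_n(t)|^q \to \E\sup_t|W(t)|^q$. Rewriting these as integrals of $d_C(\cdot,0)^q$ against the push-forward measures on $C[0,1]$ exactly as in the proof of Theorem~\ref{wpc} and applying Proposition~\ref{lid} with $\mu_n=\mu^r\circ W_n^{-1}$, $\mu=\mu^r\circ W^{-1}$ and $x_0=0$ then yields $\mathcal{W}_q(W_n,W)\to 0$.

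\textbf{Reduction to the base map.} The moment bound will be proved by reducing the flow integral to a Birkhoff sum for $T$. Define the induced observable
\[
\tilde v(x):=\int_0^{r(x)} v(x,u)\,\rmd u, \qquad x\in M.
\]
Using $\mu^r=(\mu\times l)/\bar r$ and $\int_{M^r}v\,\rmd\mu^r=0$ one obtains $\int_M \tilde v\,\rmd\mu=0$. Since $v$ is H\"older in $x$ uniformly in $u$ and $r$ is H\"older (hence bounded on the bounded space $M$), a short triangle-inequality computation gives $\tilde v\in C^\eta(M)$. For $(x,u)\in M^r$, letting $N(s,x,u)$ count the number of times the orbit $\{\phi_\tau(x,u)\}_{0\le \tau\le s}$ returns to the base $M\times\{0\}$, a direct computation yields
\[
\int_0^{s}v\circ\phi_\tau(x,u)\,\rmd\tau=\sum_{i=0}^{N(s,x,u)-1}\tilde v(T^{i}x)+E(s,x,u),
\]
where the boundary term satisfies $|E(s,x,u)|\le 2|v|_\infty\|r\|_\infty$.

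\textbf{Finishing the bound.} Since $\inf r>0$, we have $N(nt,x,u)\le nt/\inf r\le Cn$ for all $t\in[0,1]$, so
\[
\sup_{t\in[0,1]}\bigg|\int_0^{nt}v\circ\phi_s\,\rmd s\bigg|\le \max_{0\le k\le Cn}\bigg|\sum_{i=0}^{k-1}\tilde v\circ T^i\bigg| + 2|v|_\infty\|r\|_\infty.
\]
Applying Lemma~\ref{res}(e) to the H\"older observable $\tilde v$, taking $L^{2(p-1)}$ norms and dividing by $\sqrt n$ produces the required uniform moment bound on $\sup_t|W_n(t)|$.

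\textbf{Main obstacle.} The overarching structure is essentially forced by Theorem~\ref{wpc}; the genuinely new work is the flow-to-discrete reduction. The delicate points there are the verification that $\tilde v$ lies in $C^\eta(M)$ (combining H\"older regularity of $v$ in the base direction with H\"older regularity of the roof $r$), handling the boundary terms at the two endpoints of the interval $[0,nt]$, and noting that the crude counting bound $N\le Cn$ is sufficient because only an $L^{2(p-1)}$ bound of optimal order $\sqrt n$ is needed, not a sharper asymptotic. Beyond these points the argument is a clean transcription of the discrete case.
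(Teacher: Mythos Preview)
Your proposal is correct and follows essentially the same route as the paper: both arguments obtain the uniform $L^{2(p-1)}$ moment bound on $\sup_t|W_n(t)|$ by writing $W_n(t)=n^{-1/2}\tilde v_{N(nt)}+F_n(t)$ with a bounded remainder $F_n$, controlling the discrete part via Lemma~\ref{res}(e) applied to $\tilde v$ together with the crude lap-number bound $N(nt)\le \kappa n$, and then repeating verbatim the proof of Theorem~\ref{wpc}. The only cosmetic difference is that the paper records the boundary term as $H\circ\phi_{nt}-H$ with $H(x,u)=\int_0^u v(x,s)\,\rmd s$, which is exactly your $E$.
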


\begin{thm}\label{nef}
Let $\phi_t:M^r\to M^r$ be a nonuniformly expanding semiflow. Suppose that the return time $R\in L^p(Y)$ for $p\ge4$ and the roof function $r:M\to \R^+$ is H$\ddot{o}$lder. Let $v:M^r\to \R$ be a H\"older observable with $\int_{M^r} v \rmd \mu^r=0$. Then there exists a constant $C>0$ such that $\mathcal{W}_{\frac{p}{2}}(W_{n},W)\leq Cn^{-\frac{1}{4}+\frac{1}{4(p-1)}}$ for all $n\geq 1$.
\end{thm}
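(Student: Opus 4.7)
My plan is to reduce the continuous-time process $W_n$ on $(M^r,\mu^r)$ to a discrete-time Birkhoff sum on the base $(M,T,\mu)$, then invoke Theorem \ref{thnon}, following the general strategy of \cite{AOP16, MT04, levy15}. As the first step I would introduce the induced observable $V:M\to\R$ defined by $V(x):=\int_0^{r(x)} v(\phi_u(x,0))\,du$. Since $v$ is bounded and H\"older (in the $x$-direction), and $r$ is H\"older and bounded (as $M$ is bounded), $V$ is H\"older on $M$. The product structure $\mu^r=(\mu\times l)/\bar r$ yields $\int_M V\,d\mu=\bar r\int_{M^r} v\,d\mu^r=0$, so Theorem \ref{thnon} applied to $(T,V)$ gives $\mathcal{W}_{p/2}(W_n^V,W^V)\le C n^{-1/4+1/(4(p-1))}$, where $W_n^V$ is the discrete-time process built from $V$ as in \eqref{exp} and $W^V$ is a Brownian motion whose variance is $\bar r$ times that of the flow's limit Brownian motion.

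The second step is to connect $W_n$ to a deterministic discretization of $W_n^V$. For $(x,u)\in M^r$, let $N_n(t,x,u):=\#\{k\ge 1:r_k(x)\le nt+u\}$ be the number of complete flow cycles from $(x,u)$ up to time $nt$. Substituting $\tau=s+u$ and breaking the integral into complete cycles gives
\[
\int_0^{nt} v\circ\phi_s(x,u)\,ds = V_{N_n(t,x,u)}(x)+E_{n,t}(x,u), \qquad |E_{n,t}(x,u)|\le 2|v|_\infty|r|_\infty,
\]
so $W_n(t)(x,u)=n^{-1/2}V_{N_n(t,x,u)}(x)+O(n^{-1/2})$ uniformly. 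I would then replace the random index $N_n(t,x,u)$ by the deterministic $[nt/\bar r]$. The difference is a partial sum of $V\circ T^j$ over a window whose length $L_n:=\sup_{t\in[0,1]}|N_n(t,x,u)-[nt/\bar r]|$ satisfies $\|L_n\|_{L^{2(p-1)}}\le Cn^{1/2}$ by Lemma \ref{res}$(e)$ applied to $r-\bar r$. Bounding such partial sums via Proposition \ref{est} (with $\|Z_n\|_{L^{2(p-1)}}\le Cn^{1/4+1/(4(p-1))}$) and invoking H\"older's inequality yields
\[
\|W_n-\tilde W_n\|_{L^{p/2}(\mu^r)}\le C n^{-1/4+1/(4(p-1))},\qquad \tilde W_n(t)(x,u):=n^{-1/2} V_{[nt/\bar r]}(x).
\]

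Finally, $\tilde W_n$ agrees with $\bar r^{-1/2}\,W_{[n/\bar r]}^V$ up to lower-order boundary terms from the floor function, so Proposition \ref{LIP} combined with the bound from the first step yields $\mathcal{W}_{p/2}(\tilde W_n,W)\le C n^{-1/4+1/(4(p-1))}$, where $W$ is the flow's limit Brownian motion. The triangle inequality for $\mathcal{W}_{p/2}$ then completes the proof.

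The main obstacle I foresee is the discrepancy between the measure $\mu^r$ on $M^r$ (under which the rate is sought) and the measure $\mu$ on $M$ (underpinning Theorem \ref{thnon}): the projection $(x,u)\mapsto x$ pushes $\mu^r$ to the absolutely continuous measure $(r/\bar r)\,\mu$ rather than to $\mu$ itself, so the distribution of $\tilde W_n$ under $\mu^r$ is not directly the one controlled by Theorem \ref{thnon}. Since $r/\bar r$ is bounded above and below, this measure change should contribute only a lower-order correction through the decay of correlations of $T$, but verifying this cleanly without losing in the exponent---while simultaneously aligning the maximal inequalities for $r_k-k\bar r$ and for the Birkhoff sums $V_k$ so their contributions combine to give exactly $n^{-1/4+1/(4(p-1))}$---is the delicate technical point.
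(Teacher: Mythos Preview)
Your plan is essentially the paper's proof. The induced observable $V$ is the paper's $\tilde v$, your lap-number estimate $\|L_n\|_{L^{2(p-1)}}=O(n^{1/2})$ is Proposition~\ref{ntt}, and your bound $\|d_C(W_n,\bar r^{-1/2}W^V_{[n/\bar r]})\|_{L^{p-1}}\le Cn^{-1/4+1/(4(p-1))}$ via Proposition~\ref{est} and H\"older is Lemma~\ref{wgn}; the proof of Theorem~\ref{nef} is then exactly the $\mathcal W_{p/2}$ triangle inequality you describe, invoking Theorem~\ref{thnon} for the discrete piece.

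On your obstacle: the paper disposes of it in one sentence, asserting that ``no matter $\widetilde W_{[n/\bar r]}$ and $\widetilde W$ are regarded as processes defined on $(M^r,\mu^r)$ or $(M,\mu)$, the Wasserstein distance between their distributions on $C[0,1]$ remains the same.'' Your objection is well taken---the projection $(x,u)\mapsto x$ pushes $\mu^r$ to $(r/\bar r)\mu$, not to $\mu$, so the two laws of $\widetilde W_{[n/\bar r]}$ genuinely differ, and the assertion as written is not literally correct. The gap is repairable (your intuition that boundedness of $r/\bar r$ together with decay of correlations makes the discrepancy lower order is the right one; alternatively one can trace the measure change through the explicit $L^p$ estimates behind Theorem~\ref{thnon}), but the paper does not carry this out. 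So you have reproduced the paper's argument and been more scrupulous than the paper at its one soft spot.
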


In the remainder of this subsection, we prove Theorems \ref{flow} and \ref{nef}. We define $\tilde{v}:M\to \R$ by setting $\tilde{v}(x):=\int_{0}^{r(x)}v(x,u)\rmd u$. Then $\tilde{v}:M\to \R$ is a H\"older observable with $\int_{M} \tilde{v} \rmd \mu=0$. Also, we define the continuous processes $\widetilde{W}_n\in C[0,1]$ as follows:
\begin{equation}\label{dis}
\widetilde{W}_{n}(t):=\frac{1}{\sqrt{n}}\bigg[\sum_{j=0}^{[nt]-1}\tilde{v}\circ T^j+(nt-[nt])\tilde{v}\circ T^{[nt]}\bigg],\quad t\in[0,1].
\end{equation}
In the discrete time case, we have obtained the corresponding convergence result; that is,
$\mathcal{W}_{q}(\widetilde W_{n},\widetilde W)\to 0$ in $C[0,1]$ for all $1\le q< 2(p-1)$, where $\widetilde W=_{d}(\bar r)^{1/2}W$ is a Brownian motion. Note that the probability space for $\widetilde{W}_{n}$ is $(M,\mu)$, and we can extend $\widetilde{W}_{n}$ trivially to $(M^r, \mu^r)$ by setting $\widetilde{W}_{n}(x,u)=\widetilde{W}_{n}(x)$; we will regard $\widetilde{W}_{n}$ defined on $(M,\mu)$ or $(M^r, \mu^r)$ alternately
for our purpose, which should not cause confusion. The similar convention applies also to other processes or random variables in what follows.

Denote $v_t=\int_{0}^{t}v\circ \phi_s \rmd s$, $\tilde{v}_n=\sum_{j=0}^{n-1}\tilde{v}\circ T^j$, $r_n=\sum_{j=0}^{n-1}r\circ T^j$. For $(x,u)\in M^r$ and $t>0$, we define the lap number $N(t)=N(x,u,t)\in \N$:
\[
N(t):=\max\{n\ge 0:r_n(x)\le u+t\}.
\]

\begin{prop}\label{ntt}
 $\left\|\sup_{t\in[0,1]}\big|N(nt)-[\frac{nt}{\bar r}]\big|\right\|_{L^{2(p-1)}(M^r)}=O(n^{1/2})$ as $n\to\infty$.
\end{prop}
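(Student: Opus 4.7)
The plan is to relate $N(nt)$ to the Birkhoff sums of $r$. By the very definition of the lap number, $r_{N(nt)}(x) \le u + nt < r_{N(nt)+1}(x)$. Since $r - \bar r$ is a mean-zero H\"older observable on $M$, Lemma \ref{res}$(e)$ provides the maximal moment bound
\[
\Big\| \max_{0 \le k \le m} |r_k - k \bar r| \Big\|_{L^{2(p-1)}(M)} \le C m^{1/2}, \qquad m \ge 1.
\]
This is the only nontrivial ingredient beyond the setup that is needed.

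First I would record the deterministic a priori estimate coming from $\inf r > 0$: from $r_{N(nt)}(x) \ge N(nt)\inf r$ one obtains $N(nt) \le (u+nt)/\inf r \le C_1 n$ uniformly in $(x,u) \in M^r$ and $t \in [0,1]$, using also that $(M,d)$ is bounded and $r$ is H\"older, hence $u \le r(x) \le \|r\|_\infty$. Then, setting $D_n := \max_{0 \le k \le C_1 n} |r_k - k \bar r|$, the sandwich
\[
N(nt)\bar r - D_n \le r_{N(nt)} \le u + nt < r_{N(nt)+1} \le (N(nt)+1)\bar r + D_n
\]
yields
\[
\Big| N(nt) - \frac{u+nt}{\bar r} \Big| \le \frac{D_n + \bar r}{\bar r},
\]
and the triangle inequality, together with $|nt/\bar r - [nt/\bar r]| \le 1$, gives the pointwise bound $\sup_{t \in [0,1]} |N(nt) - [nt/\bar r]| \le C(1 + u + D_n)$ for a constant $C$ depending only on $\bar r$.

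It remains to take $L^{2(p-1)}(M^r)$-norms. Because $\mu^r = (\mu \times l)/\bar r$ and $r$ is bounded, any function depending only on $x \in M$ satisfies $\|f\|_{L^{2(p-1)}(M^r)} \le (\|r\|_\infty/\bar r)^{1/(2(p-1))} \|f\|_{L^{2(p-1)}(M)}$, while the contribution of $u$ is uniformly bounded since $u \le \|r\|_\infty$. Applying the maximal moment bound with $m = C_1 n$ gives $\|D_n\|_{L^{2(p-1)}(M^r)} \le C n^{1/2}$, and the desired estimate follows. The argument is essentially a renewal-theoretic calculation, so the main obstacle is purely bookkeeping: one must be careful to apply the deterministic upper bound $N(nt) \le C_1 n$ before invoking the $L^{2(p-1)}$ maximal inequality, and to keep track of the change of measure between $\mu$ and $\mu^r$ when transferring norms.
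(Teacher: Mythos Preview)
Your proposal is correct and follows essentially the same approach as the paper: both arguments sandwich $N(nt)$ using the defining inequalities $r_{N(nt)} \le u+nt < r_{N(nt)+1}$, invoke the a priori bound $N(nt) \le \kappa n$ from $\inf r > 0$, apply Lemma~\ref{res}$(e)$ to the centered roof function $r-\bar r$ to control $\max_{k \le \kappa n}|r_k - k\bar r|$ in $L^{2(p-1)}$, and then transfer the norm from $M$ to $M^r$ via the bounded density factor. The only cosmetic difference is that the paper packages the centered sums as $\tilde r_j = r_j/\bar r - j$ and absorbs the stray $u$-term into $|\tilde r_1|$ (since $u \le r_1(x) = r(x)$), whereas you carry $u$ explicitly and bound it by $\|r\|_\infty$; also be sure your $D_n$ ranges up to $C_1 n + 1$ (or that $C_1$ is chosen large enough) so that the upper sandwich $r_{N(nt)+1} \le (N(nt)+1)\bar r + D_n$ is legitimate.
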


\begin{proof}
Define $\tilde{r}_{j}:=\frac{1}{\bar r}r_{j}-j$ for $j\in\N$. Then
\[
\tilde{r}_{N(nt)}+N(nt)=\frac{1}{\bar r}r_{N(nt)}\le \frac{1}{\bar r}(u+nt)<\frac{1}{\bar r}r_{N(nt)+1}=\tilde{r}_{N(nt)+1}+N(nt)+1.
\]
So
\[
N(nt)-\frac{nt}{\bar r}\le \frac{u}{\bar r}-\tilde{r}_{N(nt)}\le\frac{r_1}{\bar r}-\tilde{r}_{N(nt)}\le |\tilde{r}_{1}|+|\tilde{r}_{N(nt)}|+1,
\]
\[
N(nt)-\frac{nt}{\bar r}>\frac{u}{\bar r}-\tilde{r}_{N(nt)+1}-1>-\tilde{r}_{N(nt)+1}-1>-(|\tilde{r}_{N(nt)+1}|+1).
\]
We note that $N(t)\le [t/ \inf r]+1$ for $t>0$. Then there exists a constant $\kappa\in \N$ such that $N(nt)\le \kappa n$ for all $t\in[0,1]$. Hence
\[
\bigg|N(nt)-\big[\frac{nt}{\bar r}\big]\bigg|\le \bigg|N(nt)-\frac{nt}{\bar r}\bigg|+1\le 2\max_{1\le j\le \kappa n+1}\big|\tilde{r}_{j}\big|+2.
\]
So we have
\begin{align*}
&\bigg\|\sup_{t\in[0,1]}\big|N(nt)-[\frac{nt}{\bar r}]\big|\bigg\|_{L^{2(p-1)}(M^r)}\le2\bigg\|\max_{1\le j\le \kappa n+1}\big|\tilde{r}_{j}\big|\bigg\|_{L^{2(p-1)}(M^r)}+2\\
&=\frac{2}{\bar r}\bigg\|\max_{1\le j\le \kappa n+1}\big|r_{j}-j\bar r\big|\bigg\|_{L^{2(p-1)}(M^r)}+2\\
&\le C\bigg\|\max_{1\le j\le \kappa n+1}\big|r_{j}-j\bar r\big|\bigg\|_{L^{2(p-1)}(M)}\\
&\le Cn^{\frac{1}{2}},
\end{align*}
where the second inequality holds because we have $\|g\|_{L^{p}(M^r)}\le \bar{r}^{-1/p}|r|_{\infty}^{1/p}\|g\|_{L^{p}(M)}$
for a function $g(x,u)=g(x)$. And the last inequality is due to Lemma \ref{res}$(e)$, regarding $r-\bar r$ as an observable.
\end{proof}

\begin{lem}\label{wgn}
Let $p>2$. Then there exists a constant $C>0$ such that $\mathcal{W}_{p-1}(W_{n}, \frac{1}{\sqrt{\bar r}}\widetilde{W}_{[\frac{n}{\bar r}]})\leq Cn^{-\frac{1}{4}+\frac{1}{4(p-1)}}$ for all $n\ge 1$.
\end{lem}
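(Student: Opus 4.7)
The plan is to realize both $W_n$ and $\tfrac{1}{\sqrt{\bar r}}\widetilde{W}_{[n/\bar r]}$ on the common probability space $(M^r,\mu^r)$ (extending $\widetilde{W}_{[n/\bar r]}$ trivially in the $u$-variable, as already allowed in the preceding text) and use Definition~\ref{was} to reduce the claim to the sup-path estimate
\[
\Bigl\|\sup_{t\in[0,1]}\Bigl|W_n(t)-\tfrac{1}{\sqrt{\bar r}}\widetilde{W}_{[n/\bar r]}(t)\Bigr|\Bigr\|_{L^{p-1}(M^r)}\le C n^{-\frac14+\frac1{4(p-1)}}.
\]
Exploiting the suspension structure, for each $(x,u)\in M^r$ and $t\in[0,1]$, with $N=N(x,u,nt)$ the lap number, the flow integral decomposes as $v_{nt}(x,u)=\tilde v_{N}(x)+\eta_1(x,u,t)$ where $|\eta_1|\le 2|v|_\infty|r|_\infty$. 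Parallel elementary estimates, namely the linear interpolation term in \eqref{dis}, the difference between $\sqrt n$ and $\sqrt{\bar r[n/\bar r]}$, and the replacement $[[n/\bar r]t]\mapsto[nt/\bar r]$ (differing by at most one), yield $\tfrac{1}{\sqrt{\bar r}}\widetilde{W}_{[n/\bar r]}(t)=\tfrac{1}{\sqrt n}\tilde v_{[nt/\bar r]}(x)+\eta_2$ with $\sup_t|\eta_2|=O(n^{-1/2})$, using that $\tilde v$ is bounded since $r$ is.

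It therefore suffices to bound $\bigl\|\sup_t|\tilde v_{N(nt)}(x)-\tilde v_{[nt/\bar r]}(x)|\bigr\|_{L^{p-1}(M^r)}$ by $Cn^{1/4+1/(4(p-1))}$. For each fixed $t$ the two indices lie in $[0,Cn]$ almost surely (since $\inf r>0$ forces $N\le C n$), so Proposition~\ref{est}(a) applied to the Hölder observable $\tilde v$ on $(M,\mu)$ gives the pointwise bound
\[
|\tilde v_{N(nt)}-\tilde v_{[nt/\bar r]}|\le Z_{Cn}\Bigl(\tfrac{|N(nt)-[nt/\bar r]|}{(Cn)^{1/2}-1}+3\Bigr),
\]
where $Z_{Cn}$ is a function of $x$ only. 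Taking $\sup_t$ and applying Hölder's inequality on $L^{p-1}(M^r)$ with paired exponent $2(p-1)$ separates the base-map fluctuations from the lap-number fluctuations. For the first factor, the elementary transfer $\|g\|_{L^q(M^r)}\le \bar r^{-1/q}|r|_\infty^{1/q}\|g\|_{L^q(M)}$ for $u$-independent $g$ combined with Proposition~\ref{est}(b) gives $\|Z_{Cn}\|_{L^{2(p-1)}(M^r)}\le Cn^{1/4+1/(4(p-1))}$; the second factor is $O(1)$ by Proposition~\ref{ntt}.

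Combining and dividing by $\sqrt n$ produces the required bound $O(n^{-1/4+1/(4(p-1))})$, which absorbs the $O(n^{-1/2})$ contributions from $\eta_1$ and $\eta_2$. The main obstacle I anticipate is coordinating Proposition~\ref{est}(a) with Proposition~\ref{ntt} so that the Hölder pairing lands exactly on the target exponent $n^{1/4+1/(4(p-1))}$; once that pointwise inequality is installed, the remaining work (transferring the $Z_{Cn}$ norm from $(M,\mu)$ to $(M^r,\mu^r)$ and tracking the genuinely $O(n^{-1/2})$ remainders $\eta_1,\eta_2$) is essentially bookkeeping.
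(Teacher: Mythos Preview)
Your proposal is correct and follows essentially the same route as the paper's proof: both realize the two processes on $(M^r,\mu^r)$, reduce via the coupling bound to a sup-path $L^{p-1}$ estimate, use the suspension identity $v_{nt}=\tilde v_{N(nt)}+O(1)$ to isolate the main term $\tfrac{1}{\sqrt n}(\tilde v_{N(nt)}-\tilde v_{[nt/\bar r]})$, and then pair Proposition~\ref{est} with Proposition~\ref{ntt} via Cauchy--Schwarz/H\"older in $L^{2(p-1)}$ to land on $n^{-1/4+1/(4(p-1))}$. The only cosmetic difference is that the paper displays the remainders as four separate terms $I_1,\dots,I_4$ (using Lemma~\ref{res}(e) for the normalization mismatch in $I_3$), whereas you bundle them into $\eta_1,\eta_2$ and dispatch them pointwise using the boundedness of $\tilde v$; both treatments give $O(n^{-1/2})$.
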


\begin{proof}
Define $H(x,u)=\int_{0}^{u}v(x,s)\rmd s$ for $(x,u)\in M^r$. By (6.5) in \cite{AOP16},
\[
v_{nt}(x,u)=\tilde{v}_{N(nt)}(x)+H\circ\phi_{nt}(x,u)-H(x,u).
\]
Then $W_n(t)=\frac{1}{\sqrt{n}}\tilde{v}_{N(nt)}+F_n(t)$, where $\big|\sup_{t\in[0,1]}|F_n(t)|\big|_{\infty}=O(n^{-1/2})$.

By the definition of Wasserstein distance, we have
\begin{align*}
\mathcal{W}_{p-1}(W_{n}, \frac{1}{\sqrt{\bar r}}\widetilde{W}_{[\frac{n}{\bar r}]})&\leq\bigg\|\sup_{t\in[0,1]}\big|W_{n}(t)-\frac{1}{\sqrt{\bar r}}\widetilde{W}_{[\frac{n}{\bar r}]}(t)\big|\bigg\|_{L^{p-1}(M^r)}\\
&\leq\bigg\|\sup_{t\in[0,1]}\big|\frac{1}{\sqrt{n}}\tilde{v}_{N(nt)}-\frac{1}{\sqrt{\bar r}}\widetilde{W}_{[\frac{n}{\bar r}]}(t)\big|\bigg\|_{L^{p-1}(M^r)}+\bigg|\sup_{t\in[0,1]}|F_n(t)|\bigg|_{\infty}\\
&\le I_1+I_2+I_3+I_4+O(n^{-1/2}),
\end{align*}
where
\begin{align*}
&I_1=\bigg\|\sup_{t\in[0,1]}\big|\frac{1}{\sqrt{n}}\sum_{i=0}^{N(nt)-1}\tilde{v}\circ T^{i}-\frac{1}{\sqrt{n}}\sum_{i=0}^{[\frac{nt}{\bar r}]-1}\tilde{v}\circ T^{i}\big|\bigg\|_{L^{p-1}(M^r)},\\
&I_2=\bigg\|\sup_{t\in[0,1]}\big|\frac{1}{\sqrt{n}}\sum_{i=0}^{[\frac{nt}{\bar r}]-1}\tilde{v}\circ T^i-\frac{1}{\sqrt{n}}\sum_{i=0}^{[[\frac{n}{\bar r}]t]-1}\tilde{v}\circ T^{i}\big|\bigg\|_{L^{p-1}(M^r)}=O(n^{-1/2}),\\
&I_3=\bigg\|\sup_{t\in[0,1]}\big|\frac{1}{\sqrt{n}}\sum_{i=0}^{[[\frac{n}{\bar r}]t]-1}\tilde{v}\circ T^{i}-\frac{1}{\sqrt{\bar r}\sqrt{[\frac{n}{\bar r}}]}\sum_{i=0}^{[[\frac{n}{\bar r}]t]-1}\tilde{v}\circ T^{i}\big|\bigg\|_{L^{p-1}(M^r)},\\
&I_4=\bigg\|\sup_{t\in[0,1]}\big|\frac{1}{\sqrt{\bar r}\sqrt{[\frac{n}{\bar r}}]}\big(\big[\frac{n}{\bar r}\big]t-\big[\big[\frac{n}{\bar r}\big]t\big]\big)\tilde{v}\circ T^{[[\frac{n}{\bar r}]t]}\big|\bigg\|_{L^{p-1}(M^r)}=O(n^{-1/2}).
\end{align*}

It follows from a modification of Propositions \ref{est}, Proposition \ref{ntt} and the Cauchy-Schwarz inequality that
\begin{align*}
I_1=&\frac{1}{\sqrt{n}}\bigg\|\sup_{t\in[0,1]}\big|\sum_{i=[\frac{nt}{\bar r}]}^{N(nt)-1}\tilde{v}\circ T^{i}\big|\bigg\|_{L^{p-1}(M^r)}\\
\le& n^{-\frac{1}{2}}\bigg\|Z_{\kappa n}\big({(\kappa n)}^{-\frac{1}{2}}\sup_{t\in[0,1]}\big|N(nt)-\big[\frac{nt}{\bar r}\big]\big|+3\big)\bigg\|_{L^{p-1}(M^r)}\\
\le& n^{-\frac{1}{2}}\big\|Z_{\kappa n}\big\|_{{L^{2(p-1)}(M^r)}}\bigg({(\kappa n)}^{-\frac{1}{2}}\bigg\|\sup_{t\in[0,1]}\big|N(nt)-\big[\frac{nt}{\bar r}\big]\big|\bigg\|_{{L^{2(p-1)}(M^r)}}+3\bigg)\\
\le& Cn^{-\frac{1}{2}}\big\|Z_{\kappa n}\big\|_{{L^{2(p-1)}(M^r)}}\leq Cn^{-\frac{1}{4}+\frac{1}{4(p-1)}}.
\end{align*}
By Lemma \ref{res}$(e)$,
\begin{align*}
I_3\le & \bigg|\frac{1}{\sqrt{n}}-\frac{1}{\sqrt{\bar r}\sqrt{[\frac{n}{\bar r}}]}\bigg|\big[\frac{n}{\bar r}\big]^{\frac{1}{2}}\|\tilde v\|_\eta\\
\le & \frac{1}{\sqrt{\bar r}}\bigg|\frac{\sqrt{\frac{n}{\bar r}}-\sqrt{[\frac{n}{\bar r}]}}{\sqrt{\frac{n}{\bar r}\cdot[\frac{n}{\bar r}]}}\bigg| [\frac{n}{\bar r}\big]^{\frac{1}{2}}\|\tilde v\|_\eta\\
\le &Cn^{-\frac{1}{2}}\|\tilde v\|_\eta.
\end{align*}
Thus the result follows from these estimates.
\end{proof}

\begin{proof}[Proof of Theorem \ref{flow}]
Since $W_n(t)=\frac{1}{\sqrt{n}}\tilde{v}_{N(nt)}+F_n(t)$, where $\big|\sup_{t\in[0,1]}|F_n(t)|\big|_{\infty}=O(n^{-1/2})$, and $N(nt)\le \kappa n$ for $t\in[0,1]$. Then
\begin{align*}
\bigg\|\sup_{t\in[0,1]}|W_{n}(t)|\bigg\|_{L^{2(p-1)}(M^r)}\le \bigg\|\max_{j\le \kappa n}\big|\frac{1}{\sqrt{n}}\tilde{v}_{j}\big|\bigg\|_{L^{2(p-1)}(M^r)}+\bigg|\sup_{t\in[0,1]}|F_n(t)|\bigg|_{\infty}<\infty.
\end{align*}
So $W_n$ has a finite moment of order $2(p-1)$. Following the argument in the proof of Theorem \ref{wpc}, we can obtain the convergence result.
\end{proof}

\begin{proof}[Proof of Theorem \ref{nef}]
Consider the continuous process $\frac{1}{\sqrt{\bar r}}\widetilde{W}_{[\frac{n}{\bar r}]}\in C[0,1]$ with $\widetilde{W}_n$ defined in \eqref{dis}. Since $ W=_{d}\frac{1}{\sqrt{\bar r}}\widetilde W$, then for $p\ge 4$
\[
\mathcal{W}_{\frac{p}{2}}(W_{n}, W)\le \mathcal{W}_{\frac{p}{2}}(W_{n},\frac{1}{\sqrt{\bar r}}\widetilde{W}_{[\frac{n}{\bar r}]})+ \mathcal{W}_{\frac{p}{2}}(\frac{1}{\sqrt{\bar r}}\widetilde{W}_{[\frac{n}{\bar r}]},\frac{1}{\sqrt{\bar r}}\widetilde{W}).
\]
Note that no matter $\widetilde{W}_{[\frac{n}{\bar r}]}$ and $\widetilde{W}$ are regarded as processes defined on $(M^r, \mu^r)$ or $(M, \mu)$, the Wasserstein distance between their distributions on $C[0,1]$ remains the same.  By Lemma \ref{wgn} and Theorem \ref{thnon}, we can conclude that
$\mathcal{W}_{\frac{p}{2}}(W_{n}, W)\le Cn^{-\frac{1}{4}+\frac{1}{4(p-1)}}$.
\end{proof}

\subsection{Nonuniformly hyperbolic flows}
Suppose that $T:M\to M$ is a nonuniformly hyperbolic transformation of order $p>2$ as in Section 3.2. Given an integrable roof function $r:M\to \R^{+}$, as the way we define the semiflow in Section 5.1, we can define a suspension flow $\phi_t:M^r\to M^r$. Then we call $\phi_t:M^r\to M^r$ a nonuniformly hyperbolic flow. We assume that $\inf r>0$. Under the setting of Section 5.1, it follows from \cite[Corollary 3.5]{MR2175992} that the WIP holds for nonuniformly hyperbolic flows; that is, $W_n\to_{w}W$ in $C[0,1]$, where $W$ is a Brownian motion with mean zero and variance $\sigma^2\ge 0$.

\begin{thm}
Let $\phi_t:M^r\to M^r$ be a nonuniformly hyperbolic flow. Suppose that the return time $R\in L^p(Y)$ for $p> 2$ and the roof function $r:M\to \R^+$ is H$\ddot{o}$lder. Let $v:M^r\to \R$ be a H\"older observable with $\int_{M^r} v \rmd \mu^r=0$. Then $\mathcal{W}_{q}(W_{n},W)\to 0$ in $C[0,1]$ for all $1\le q< 2(p-1)$.
\end{thm}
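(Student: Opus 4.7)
The plan is to mirror the proof of Theorem \ref{flow} step for step, replacing the discrete-time nonuniformly expanding inputs with their nonuniformly hyperbolic counterparts. First I would set up the same reduction from continuous to discrete time. Define $\tilde v:M\to\R$ by $\tilde v(x)=\int_0^{r(x)}v(x,u)\rmd u$; because $r$ is H\"older, $v$ is H\"older along fibres, and $\inf r>0$, $\tilde v$ is H\"older on $M$ and $\int_M\tilde v\,\rmd\mu=0$. Define the discrete process $\widetilde W_n$ as in \eqref{dis} for this $\tilde v$ and the underlying nonuniformly hyperbolic map $T$. Using the identity $v_{nt}(x,u)=\tilde v_{N(nt)}(x)+H\circ\phi_{nt}(x,u)-H(x,u)$ with $H(x,u)=\int_0^u v(x,s)\rmd s$ from \cite{AOP16}, we obtain $W_n(t)=n^{-1/2}\tilde v_{N(nt)}+F_n(t)$ with $\bigl|\sup_{t\in[0,1]}|F_n(t)|\bigr|_\infty=O(n^{-1/2})$.

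Next I would produce a uniform moment bound on $\sup_{t\in[0,1]}|W_n(t)|$ in $L^{2(p-1)}(M^r)$. Since $N(nt)\le\kappa n$ for all $t\in[0,1]$,
\[
\bigg\|\sup_{t\in[0,1]}|W_n(t)|\bigg\|_{L^{2(p-1)}(M^r)}\le n^{-1/2}\bigg\|\max_{j\le \kappa n}|\tilde v_j|\bigg\|_{L^{2(p-1)}(M^r)}+O(n^{-1/2}).
\]
As noted in the paper, for $g(x,u)=g(x)$ one has $\|g\|_{L^{2(p-1)}(M^r)}\le \bar r^{-1/(2(p-1))}|r|_\infty^{1/(2(p-1))}\|g\|_{L^{2(p-1)}(M)}$, so the right-hand side is controlled by $n^{-1/2}\|\max_{j\le\kappa n}|\tilde v_j|\|_{L^{2(p-1)}(M)}$. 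The nonuniformly hyperbolic analogue of Lemma \ref{res}$(e)$, namely \cite[Corollary 5.5]{MR3795019} which is cited in the proof of the hyperbolic version of Theorem \ref{wpc} earlier in the paper, yields $\|\max_{j\le\kappa n}|\tilde v_j|\|_{L^{2(p-1)}(M)}\le C\|\tilde v\|_\eta n^{1/2}$. Combining gives $\|\sup_t|W_n(t)|\|_{L^{2(p-1)}(M^r)}\le C$ uniformly in $n$.

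Finally, the WIP $W_n\to_w W$ in $C[0,1]$ for nonuniformly hyperbolic flows is available from \cite[Corollary 3.5]{MR2175992} and is quoted at the start of Section 5.2. The uniform $L^{2(p-1)}$ bound above gives uniform integrability of $\sup_{t\in[0,1]}|W_n(t)|^q$ for every $1\le q<2(p-1)$, so by \cite[Theorem 4.5.2]{Chung}
\[
\lim_{n\to\infty}\E\sup_{t\in[0,1]}|W_n(t)|^q=\E\sup_{t\in[0,1]}|W(t)|^q.
\]
Rewriting these expectations as integrals against the push-forward measures $\mu^r\circ W_n^{-1}$ and $\mu^r\circ W^{-1}$ on $C[0,1]$ with basepoint $x_0\equiv 0$, the two hypotheses of Proposition \ref{lid} are satisfied, and Wasserstein convergence $\mathcal W_q(W_n,W)\to 0$ follows exactly as in the proofs of Theorems \ref{wpc} and \ref{flow}.

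The only non-routine point, and the step I would treat most carefully, is justifying that the discrete-time moment bound on $\max_{j\le\kappa n}|\tilde v_j|$ really does hold at the exponent $2(p-1)$ for nonuniformly hyperbolic $T$ with $R\in L^p(Y)$ applied to the (merely H\"older) observable $\tilde v$; this is precisely the content invoked in the earlier hyperbolic theorem and is the hyperbolic analogue of Lemma \ref{res}$(e)$. Everything else is a transposition of the semiflow argument, since the passage $M\leadsto M^r$ through the bound $\|\cdot\|_{L^q(M^r)}\le C\|\cdot\|_{L^q(M)}$ and the control of $N(nt)$ by $\max_{j\le\kappa n+1}|r_j-j\bar r|$ via Proposition \ref{ntt} work identically in the hyperbolic setting, using the hyperbolic moment bound in place of Lemma \ref{res}$(e)$.
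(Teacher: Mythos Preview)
Your proposal is correct and follows essentially the same approach as the paper, whose proof reads simply ``This is almost identical to the proof of Theorem \ref{flow}.'' You have spelled out that identification accurately: the decomposition $W_n(t)=n^{-1/2}\tilde v_{N(nt)}+F_n(t)$, the crude bound $N(nt)\le\kappa n$, the $L^{2(p-1)}$ maximal moment estimate for $\tilde v_j$ (via \cite[Corollary 5.5]{MR3795019} in place of Lemma~\ref{res}$(e)$), and then the uniform-integrability argument of Theorem~\ref{wpc} with Proposition~\ref{lid}. One small remark: the reference to Proposition~\ref{ntt} in your final paragraph is unnecessary here --- that finer estimate on $N(nt)-[nt/\bar r]$ is used only for the rate result (Lemma~\ref{wgn}/Theorem~\ref{nef}), not for the present convergence statement, which needs only $N(nt)\le\kappa n$.
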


\begin{proof}
This is almost identical to the proof of Theorem \ref{flow}.
\end{proof}

\begin{thm}\label{flow2}
Let $\phi_t:M^r\to M^r$ be a nonuniformly hyperbolic flow. Suppose that the return time $R\in L^p(Y)$ for $p\ge4$ and the roof function $r:M\to \R^+$ is H$\ddot{o}$lder. Let $v:M^r\to \R$ be a H\"older observable with $\int_{M^r} v \rmd \mu^r=0$. Then there exists a constant $C>0$ such that $\mathcal{W}_{\frac{p}{2}}(W_{n},W)\leq Cn^{-\frac{1}{4}+\frac{1}{4(p-1)}}$ for all $n\geq 1$.
\end{thm}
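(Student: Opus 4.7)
The plan is to follow the reduction carried out for Theorem \ref{nef}, replacing each ingredient by its nonuniformly hyperbolic counterpart. Concretely, I would again introduce the induced observable $\tilde v : M \to \R$ by $\tilde v(x) := \int_0^{r(x)} v(x,u)\,\rmd u$. Since $r$ is H\"older, $v$ is H\"older in the base variable and bounded, and $\int_{M^r} v\,\rmd\mu^r = 0$, one checks as in Section 5.1 that $\tilde v \in C^\eta(M)$ with $\int_M \tilde v\,\rmd\mu = 0$. Using $\tilde v$, form the discrete process $\widetilde W_n$ exactly as in \eqref{dis}, and observe that $\widetilde W =_d (\bar r)^{1/2} W$.

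Next, I would redo the comparison step (Lemma \ref{wgn}) in the hyperbolic setting, i.e.\ prove that
\[
\mathcal{W}_{p-1}\!\left(W_n,\ \tfrac{1}{\sqrt{\bar r}}\widetilde W_{[n/\bar r]}\right)\ \le\ C n^{-\frac{1}{4}+\frac{1}{4(p-1)}}.
\]
The decomposition $W_n(t) = \frac{1}{\sqrt n}\tilde v_{N(nt)} + F_n(t)$ with $|\sup_{t}|F_n||_\infty = O(n^{-1/2})$ is purely a change-of-variables identity on the suspension, so it survives verbatim. Then the argument splits into the four pieces $I_1,\dots,I_4$ as in Lemma \ref{wgn}. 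The terms $I_2,I_3,I_4$ are elementary and only require the moment bound $\|\max_{k\le n}|\sum_{i=0}^{k-1}\tilde v \circ T^i|\|_{L^{2(p-1)}} \le C\|\tilde v\|_\eta n^{1/2}$, which in the hyperbolic case follows from \cite[Corollary 5.5]{MR3795019} (this is the analogue of Lemma \ref{res}$(e)$ used throughout). For $I_1$ I would replace Proposition \ref{est} by its hyperbolic analogue: the proof there only invokes the maximal inequality of $\|v\|_\eta$ and the moment bound, both of which are available in the hyperbolic setting from the martingale--coboundary decomposition of \cite{MR3795019}. In parallel one reruns Proposition \ref{ntt} with $T$ now hyperbolic; the proof needs nothing beyond the moment estimate for $\|\max_{j\le \kappa n}|r_j - j\bar r|\|_{L^{2(p-1)}}$, again covered by \cite[Corollary 5.5]{MR3795019}.

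Having established the comparison inequality, I would finish by a triangle-inequality argument identical to the proof of Theorem \ref{nef}: for $p\ge 4$,
\[
\mathcal{W}_{\frac{p}{2}}(W_n, W)\ \le\ \mathcal{W}_{\frac{p}{2}}\!\left(W_n,\ \tfrac{1}{\sqrt{\bar r}}\widetilde W_{[n/\bar r]}\right)\ +\ \mathcal{W}_{\frac{p}{2}}\!\left(\tfrac{1}{\sqrt{\bar r}}\widetilde W_{[n/\bar r]},\ \tfrac{1}{\sqrt{\bar r}}\widetilde W\right).
\]
The first summand is $O(n^{-\frac14 + \frac{1}{4(p-1)}})$ by the previous step (with $p-1 \ge p/2$), and the second equals $\bar r^{-1/2}\mathcal{W}_{p/2}(\widetilde W_{[n/\bar r]},\widetilde W)$ which is $O(n^{-\frac14 + \frac{1}{4(p-1)}})$ by \emph{Theorem \ref{hyp}} applied to the hyperbolic base map $T$ and the H\"older observable $\tilde v$. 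Note here that $\widetilde W_{[n/\bar r]}$ and $\widetilde W$ are regarded on $C[0,1]$, so whether one works on $(M,\mu)$ or its trivial extension to $(M^r,\mu^r)$ is immaterial for the Wasserstein distance between their laws.

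The main obstacle is justifying that Proposition \ref{est}, Proposition \ref{ntt}, and Lemma \ref{wgn} all extend cleanly to the hyperbolic case; this is not automatic from their stated form, but is handled by substituting the corresponding $L^{2(p-1)}$ moment bounds of \cite{MR3795019} (valid for nonuniformly hyperbolic $T$) into the arguments. Once those analogues are recorded, the proof reduces mechanically to combining them with Theorem \ref{hyp}, exactly as Theorem \ref{nef} was deduced from Theorem \ref{thnon}.
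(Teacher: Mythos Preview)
Your proposal is correct and follows essentially the same approach as the paper: the paper's proof is a single sentence stating that the argument is identical to that of Theorem \ref{nef} with Theorem \ref{hyp} replacing Theorem \ref{thnon}, and you have spelled out exactly this reduction in detail, correctly identifying that the hyperbolic analogues of Propositions \ref{est} and \ref{ntt} and Lemma \ref{wgn} go through once one substitutes the $L^{2(p-1)}$ moment bound from \cite[Corollary 5.5]{MR3795019} for Lemma \ref{res}$(e)$.
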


\begin{proof}
Similar to the proof of Theorem \ref{nef}, the result follows from Theorem \ref{hyp}.
\end{proof}

\begin{exam}[Planar periodic Lorentz gases]
The $2$-dimensional periodic Lorentz gas is a model of electron gases in metals introduced by Sina{\u\i} \cite{S70}. The Lorentz flow is a billiard flow on $\Omega={\mathbb T}^2-\cup_{i=1}^{k}\Omega_i$, where $\Omega_i$'s are disjoint convex regions with $C^3$ boundaries. The Poincar\'{e} map $T$ is defined on $M=\partial\Omega\times[-\frac{\pi}{2},\frac{\pi}{2}]$. Under the finite horizon condition, which means that the time between collisions is uniformly bounded, Bunimovich et al \cite{Bu91} proved the WIP for the Lorentz flow and Young \cite{MR1637655} demonstrated that the Poincar\'{e} map has exponential decay of correlations. By Theorem \ref{flow2}, we can obtain the convergence rate for the Lorentz flow:  for all $p\geq 4$, $\mathcal{W}_{\frac{p}{2}}(W_{n},W)\leq Cn^{-\frac{1}{4}+\frac{1}{4(p-1)}}$.
\end{exam}

\begin{exam}[Dispersing billiards with cusps]
Consider dispersing billiards with cusps, where the boundary curves are all dispersing but the interior angles at corner points are zero. Since there exists an alternative cross-section $M'\subset M$ such that the Poincar\'{e} map $T':M'\to M'$ is modelled by a Younger tower with exponential tails. B\'{a}lint and Melbourne \cite{BM2008} proved the exponential mixing rate and the ASIP for the flow corresponding to the dispersing billiards with cusps. So by Theorem \ref{flow2} we can obtain the convergence rate: for all $p\geq 4$, $\mathcal{W}_{\frac{p}{2}}(W_{n},W)\leq Cn^{-\frac{1}{4}+\frac{1}{4(p-1)}}$.
\end{exam}

\section{Application to homogenization problem}
\setcounter{equation}{0}

We consider fast-slow systems of the discrete form
\begin{equation}\label{xxn}
x_\epsilon(n+1)=x_\epsilon(n)+\epsilon^2g(x_\epsilon(n),y(n),\eps)+\epsilon h(x_\epsilon(n))v(y(n)), ~x_\epsilon(0)=\xi,
\end{equation}
where $g:\mathbb{R}\times M\times \R^+\rightarrow\mathbb{R}$, $h:\mathbb{R}\rightarrow\mathbb{R}$ satisfy some regularity conditions and $v\in C^\eta(M)$ with $\int_M v\rm d\mu=0$. The fast variables $y(n)\in M$ are generated by iterating a nonuniformly expanding/hyperbolic transformation, that is  $y(n+1)=Ty(n)$, $y(0)=y_0$. Here $T:M\to M$ satisfies the setting in Section 3. The initial condition $\xi\in \R$ is fixed and $y_0\in M$ is chosen randomly, which is the reason for the emergence of randomness from deterministic dynamical systems.\\
{\bf Regularity conditions:}\\
(1) $ g:\mathbb{R}\times M\times \R^+\rightarrow\mathbb{R}$ is bounded.\\
(2) $ g(x,y,0)$ is Lipschitz in $x$ uniformly in $y$, that is $|g(x_1,y,0)-g(x_2,y,0)|\le $Lip$g|x_1-x_2|$, for all $x_1,x_2\in \mathbb{R}, y\in M$.\\
$(3)$ $\sup_{x\in \R}\sup_{y\in M}|g(x,y,\epsilon)-g(x,y,0)|\le C\epsilon^{\frac{1}{4}}$.\\
$(4)$ $ g(x,y,0)$ is H\"older continuous in $y$ uniformly in $x$, i.e. $\sup_{x\in \R}|g(x,\cdot,0)|_\eta<\infty$.\\
$(5)$ $h$ is exact; that is $h=\frac{1}{\psi'}$, where $\psi$ is a monotone differentiable function and $\psi'$ denotes the derived function. Moreover, $h, h', \frac{1}{h}$ are bounded.

Let ${\hat x}_\epsilon(t)=x_\epsilon(t\epsilon^{-2})$ for $t=0, \epsilon^2, 2\epsilon^2,\ldots$, and linearly interpolate to obtain ${\hat x}_\epsilon\in C[0,1]$. Then it follows from \cite[Theorem 1.3]{MR3064670} that ${\hat x}_\epsilon\rightarrow_{w} X$ in $C[0,1]$, where $X$ is the solution to the Stratonovich SDE
\begin{equation}\label{sde}
\mathrm{d}X=\left\{ \bar{g}(X)-\frac{1}{2}h(X)h'(X)\int_{M}v^{2}\mathrm{d}\mu\right\}\mathrm{d}t+h(X)\circ \mathrm{d}W, ~X(0)=\xi.
\end{equation}
Here $W$ is a Brownian motion with mean zero and variance $\sigma^2$, $\bar{g}(x)=\int_{M}g(x,y,0)\mathrm{d}\mu(y)$.

Define $W_\epsilon(t)=\epsilon\sum_{j=0}^{t\epsilon^{-2}-1}v(y(j))$ for $t=0, \epsilon^2, 2\epsilon^2,\ldots$, and linearly interpolate to obtain
$W_\epsilon\in C[0,1]$. Comparing $W_\epsilon$ with $W_n$, we can see that $\epsilon$ is identified with $n^{-\frac{1}{2}}$. Hence it follows from Theorems \ref{thnon} and \ref{hyp} that $\mathcal{W}_2(W_\epsilon,W)=O(\epsilon^{\frac{p-2}{2(p-1)}})$.

\begin{thm}\label{hom}
Let $T:M\rightarrow M$ be a nonuniformly expanding/hyperbolic transformation of order $p\ge 4$. Suppose that the regularity conditions hold. Then there exists a constant $C>0$ such that $\mathcal{W}_{2}({\hat x}_\epsilon, X)\le C\epsilon^{\frac{p-2}{4p}}$.
\end{thm}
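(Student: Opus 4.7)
The plan is to reduce the Stratonovich SDE and its fast-slow approximation to additive-noise form via the Lamperti change of variables, then run a Gronwall comparison and plug in the $\mathcal{W}_2$-bound on the driving process from Theorems \ref{thnon}/\ref{hyp}. Regularity condition (5) makes this clean: $h=1/\psi'$ is exact, and since $h, h', 1/h$ are bounded, $\psi$ is a $C^2$ diffeomorphism whose derivatives $\psi, \psi', \psi''$ and inverse $\psi^{-1}$ are Lipschitz on the relevant range.

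First, passing from $X$ to $Z:=\psi(X)$ and applying It\^o's formula, using $\psi'h=1$ and $\psi''h^2=-h'$, one finds that \eqref{sde} becomes the additive-noise It\^o SDE
\[
\rmd Z = G(Z)\,\rmd t + \rmd W,\qquad Z(0)=\psi(\xi),
\]
where $G(z):=\bar g(\psi^{-1}z)/h(\psi^{-1}z)-\tfrac12 h'(\psi^{-1}z)\int_M v^2\rmd\mu$ is Lipschitz. On the discrete side, set $z_\epsilon(n):=\psi(x_\epsilon(n))$ and $\hat z_\epsilon:=\psi(\hat x_\epsilon)$. A second-order Taylor expansion of $\psi$ applied to the one-step increment \eqref{xxn}, using the same two identities, yields the integral representation
\[
\hat z_\epsilon(t) = \psi(\xi) + \int_0^t G(\hat z_\epsilon(s))\,\rmd s + W_\epsilon(t) + E_\epsilon(t),
\]
where $E_\epsilon$ collects (i) a cubic Taylor remainder of size $O(\epsilon)$ after summing $\epsilon^{-2}$ terms, (ii) an $O(\epsilon^{1/4})$ contribution from condition (3) on $g$, (iii) a Riemann-sum vs.\ integral error, and (iv) the genuinely stochastic averaging discrepancy $\epsilon^2\sum_j[\phi(x_\epsilon(j),y(j))-\bar\phi(x_\epsilon(j))]$ with $\phi(x,y):=g(x,y,0)/h(x)-\tfrac12 h'(x)v(y)^2$ and $\bar\phi(x)=\int_M\phi(x,y)\rmd\mu$.

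Next, I would couple. By Proposition \ref{exi}, take an optimal $\mathcal{W}_2$-coupling of $(W_\epsilon,W)$ on a common probability space and solve both equations there. Subtracting, using Lipschitz continuity of $G$, and applying Gronwall to $\sup_{s\le t}|\hat z_\epsilon(s)-Z(s)|$ gives
\[
\bigl\|\sup_{t\in[0,1]}|\hat z_\epsilon(t)-Z(t)|\bigr\|_{L^2}\le C\,\mathcal{W}_2(W_\epsilon,W)+C\,\bigl\|\sup_{t\in[0,1]}|E_\epsilon(t)|\bigr\|_{L^2}.
\]
Theorems \ref{thnon}/\ref{hyp} with $\epsilon=n^{-1/2}$ give $\mathcal{W}_2(W_\epsilon,W)=O(\epsilon^{(p-2)/(2(p-1))})$. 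Items (i)--(iii) in $E_\epsilon$ are routine. For item (iv) I would partition $[0,1]$ into blocks of a well-chosen length $\delta$, freeze the slowly-varying coefficients $h'(x_\epsilon)$ and $1/h(x_\epsilon)$ at the start of each block, and apply the maximal Birkhoff bound Lemma \ref{res}(e) to the centered H\"older observables $v^2-\int v^2\,\rmd\mu$ and $g(\cdot,y,0)-\bar g(\cdot)$ within each block. Balancing the block-quadratic-variation error against the frozen-coefficient error and substituting the moment order $p/2$ available from Lemma \ref{res}(e) gives $\|\sup|E_\epsilon|\|_{L^2}=O(\epsilon^{(p-2)/(4p)})$. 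Transferring the estimate from $Z$ back to $X$ via the Lipschitz map $\psi^{-1}$ and Proposition \ref{LIP} yields $\mathcal{W}_2(\hat x_\epsilon,X)\le C\epsilon^{(p-2)/(4p)}$.

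The main obstacle is step (iv): the H\"older observables whose Birkhoff sums we wish to control via Lemma \ref{res}(e) appear composed with the slowly-evolving $x_\epsilon$-process, which is not adapted in a way that makes the lemma apply directly. The block-freezing together with a Cauchy--Schwarz split between the frozen prefactor and the fast mean-zero observable is what introduces the factor-of-two loss in the exponent relative to the driver rate $(p-2)/(2(p-1))$ and is the source of the weaker final exponent $(p-2)/(4p)$; in the limit $p\to\infty$ both rates converge to $\epsilon^{1/4}$, consistent with the discussion following Remark \ref{diss}.
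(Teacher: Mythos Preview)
Your overall architecture---Lamperti change of variables to reduce to additive noise, then a Gronwall/Lipschitz-solution-map comparison driven by the $\mathcal{W}_2$-bound on $W_\epsilon$---is exactly the paper's. The paper organizes it the other way round (additive case first, then Lamperti), but that is cosmetic. The substantive gap is in your step (iv), the averaging discrepancy, which the paper isolates as a separate term $D_\epsilon$ (citing \cite[Proposition 5.4]{MR3975864}) rather than folding into $E_\epsilon$.

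The mechanism you describe for the exponent $\tfrac{p-2}{4p}$ is not the one that actually produces it, and your arithmetic does not close: a ``factor-of-two loss'' from Cauchy--Schwarz applied to the driver rate would give $\tfrac12\cdot\tfrac{p-2}{2(p-1)}=\tfrac{p-2}{4(p-1)}$, not $\tfrac{p-2}{4p}$. In the paper (following \cite{MR3975864}) the exponent comes from a \emph{truncation} on the slow variable. One sets $R_\epsilon=(-32\sigma^2\log\epsilon)^{1/2}$ and splits according to the event $B_\epsilon(R_\epsilon)=\{\sup_t|\hat x_\epsilon(t)|\le R_\epsilon\}$. On the good set, block-freezing with block length $[\epsilon^{-4/3}]$ yields the \emph{better} rate $\|D_\epsilon 1_{B_\epsilon(R_\epsilon)}\|_{L^2}\le C\epsilon^{1/3}(-\log\epsilon)^{1/4}$ (\cite[Lemma 5.6]{MR3975864}). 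The bottleneck is the bad set: the tail bound $\mu(\sup_t|\hat x_\epsilon(t)|>R_\epsilon)\le C\epsilon^{(p-2)/(2p)}$ (\cite[Lemma 5.5]{MR3975864}), together with boundedness of $D_\epsilon$, gives $\|D_\epsilon 1_{B_\epsilon^c(R_\epsilon)}\|_{L^2}\le C\epsilon^{(p-2)/(4p)}$. This is where the $p$-dependence and the final exponent enter.

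The reason the truncation is needed is precisely the adaptedness obstacle you flag: after freezing, $J_\epsilon(n)$ involves a Birkhoff sum of $\tilde g(x_\epsilon(nM),\,\cdot\,)$ with the random frozen argument $x_\epsilon(nM)$ depending on the entire past trajectory. Lemma \ref{res}(e) gives uniform-in-$x$ bounds on $\|\sum_j\tilde g(x,y(j))\|$ for \emph{deterministic} $x$, and passing to random $x$ requires restricting $x$ to a compact set (hence $R_\epsilon$), then a covering/Lipschitz argument over that set; this is what \cite[Lemma 5.6]{MR3975864} does and what your sketch omits. A bare Cauchy--Schwarz split does not separate the frozen prefactor from the Birkhoff sum, because $\tilde g$ does not factor as a product.
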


\begin{proof}
First, suppose that $h(x)\equiv 1$ and $M=[\eps^{-\frac{4}{3}}]$. By \cite[Proposition 5.4]{MR3975864}, we can write
\[
{\hat x}_\epsilon(t)=\xi+W_\epsilon(t)+D_\epsilon(t)+E_\eps(t)+\int_{0}^{t}\bar{g}({\hat x}_\epsilon(s))\rmd s,
\]
where
\[
D_\epsilon(t)=\epsilon^{\frac{2}{3}}\sum_{n=0}^{[t\epsilon^{-\frac{2}{3}}]-1}J_{\eps}(n), \quad J_\epsilon(n)=\epsilon^{\frac{4}{3}}\sum_{j=nM}^{(n+1)M-1}\tilde {g}(x_\eps(nM),y(j)), \quad
\tilde {g}(x,y)=g(x,y,0)-\bar g(x)
\]
and
\begin{equation}\label{Eeps}
\bigg\|\sup_{t\in[0,1]}|E_\eps(t)|\bigg\|_{L^2}\le C\eps^{\frac{1}{4}}.
\end{equation}

Let $B_\eps(R_\eps)=\left\{\sup_{t\in[0,1]}|\hat{x}_{\eps}(t)|\le R_\eps\right\}$, where $R_\eps =(-32\sigma^2\log \eps)^{\frac{1}{2}}$. By \cite[Lemma 5.5]{MR3975864}, we have
\[
\mu(\sup_{t\in[0,1]}|\hat{x}_{\eps}(t)|\ge R_\eps)\le C\eps^{\frac{p-2}{2p}}.
\]
Since
\[
\mu(\sup_{t\in[0,1]}|D_\eps 1_{B^c_\eps(R_\eps)}(t)|>0)\le \mu(\sup_{t\in[0,1]}|\hat{x}_{\eps}(t)|\ge R_\eps)\le C\eps^{\frac{p-2}{2p}},
\]
and $D_\eps$ is bounded, we get
\[
\bigg\|\sup_{t\in[0,1]}|D_\eps 1_{B^c_\eps(R_\eps)}(t)|\bigg\|_{L^2}\le C\eps^{\frac{p-2}{4p}}.
\]
It follows from \cite[Lemma 5.6]{MR3975864} that $\big\|\sup_{t\in[0,1]}|D_\eps 1_{B_\eps(R_\eps)}(t)|\big\|_{L^2}\le C\eps^{\frac{1}{3}}(-\log\eps)^{\frac{1}{4}}$. Hence
\begin{equation}\label{Deps}
\bigg\|\sup_{t\in[0,1]}|D_\eps(t)|\bigg\|_{L^2}\le C\eps^{\frac{p-2}{4p}}.
\end{equation}

Next, define a continuous map $G:C[0,1]\rightarrow C[0,1]$ as $G(u)=v$, where $v$ is the unique solution to $v(t)=\xi+u(t)+\int_{0}^{t}\bar{g}(v(s))\rmd s$. Since $\bar{g}$ is Lipschitz, according to the existence and uniqueness of solutions to ordinary differential equations, $G$ is well-defined. By Gronwall's inequality, $G$ is Lipschitz with Lip$G\le e^{Lip \bar g}$.

Since $X=G(W)$ and ${\hat x}_\epsilon=G(W_\eps+D_\eps+E_\eps)$, we have
\[
\mathcal{W}_{2}({\hat x}_\epsilon, X)=\mathcal{W}_{2}(G(W_\eps+D_\eps+E_\eps), G(W))\le e^{Lip \bar g}\mathcal{W}_{2}(W_\eps +D_\eps+E_\eps, W).
\]
Following $\mathcal{W}_2(W_\epsilon,W)=O(\epsilon^{\frac{p-2}{2(p-1)}})$ and the above estimates \eqref{Eeps}--\eqref{Deps}, we obtain the conclusion in the case $h\equiv 1$.

When $h\not\equiv 1$, by a change in variables, $z_\eps(n)=\psi(x_\eps(n))$, ${\hat z}_\epsilon(t)=\psi({\hat x}_\epsilon(t))$, we can
reduce the case of multiplicative noise to the case of additive noise. That is,
\begin{align*}
z_\eps(n+1)-z_\eps(n)=\eps v(y(n))+\eps^2G(z_\eps(n),y(n),\eps),
\end{align*}
where $G(z,y,\eps):=\psi'(\psi^{-1}z)g(\psi^{-1}z,y,\eps)+\frac{1}{2}\psi''(\psi^{-1}z) (\psi'(\psi^{-1}z))^{-2}v^2(y)+O(\eps)$; see \cite{MR3064670, MR3795019} for the calculations. Moreover, we can verify that $G(z,y,\eps)$ satisfies the regularity conditions (1)--(4).

Let
\begin{align*}
\bar{G}(z):=\psi'(\psi^{-1}z)\bar{g}(\psi^{-1}(z))+\frac{1}{2}\psi''(\psi^{-1}(z)) (\psi'(\psi^{-1}(z)))^{-2}\int_{M}v^2\rm d\mu.
\end{align*}
Consider the SDE
\begin{equation}\label{zwt}
\rmd Z=\rmd W+\bar{G}(Z)\rmd t, \quad Z(0)=\psi(\xi).
\end{equation}
Then ${\hat z}_\eps\rightarrow_{w}Z$, where $Z$ is the solution to \eqref{zwt}, and $\mathcal{W}_{2}({\hat z}_\eps,Z)=O(\eps^{\frac{p-2}{4p}})$. Because the Stratonovich integral satisfies the usual chain rule, we can see that $Z=\psi(X)$ satisfies the SDE \eqref{zwt} as in \cite{MR3064670}. Hence we have
\[
\mathcal{W}_{2}({\hat x}_\epsilon, X)=\mathcal{W}_{2}(\psi^{-1}({\hat z}_\eps),\psi^{-1}(Z))\le \text{Lip}(\psi^{-1})\mathcal{W}_{2}({\hat z}_\eps,Z)=O(\eps^{\frac{p-2}{4p}}).
\]
The proof is complete.
\end{proof}

\begin{rem}
By minor modifications at several places in the proof of Theorem \ref{hom}, we can get the convergence rate of $\mathcal{W}_q(\hat{x}_\eps, X)$ for all $1<q\le p/2$. But $\mathcal{W}_1$ and $\mathcal{W}_2$ are the ``most useful" Wasserstein distances in Villani's words (see \cite[Remark 6.6]{MR2459454}), so we compute only the $\mathcal{W}_2$-rate for illustration in this application.
\end{rem}

\begin{rem}\label{diss}
In this application, we obtain the Wasserstein convergence rate $O(\eps^{\frac{1}{4}-\delta})$ as $p\to\infty$ under the condition
$|g(x,y,\epsilon)-g(x,y,0)|_\infty\le C\epsilon^{\frac{1}{4}}$.\ While in \cite{MR3975864}, Antoniou and Melbourne obtained the weak convergence rate $O(\eps^{\frac{1}{3}-\delta})$ as $p\to\infty$  under the condition $|g(x,y,\epsilon)-g(x,y,0)|_\infty\le C\epsilon^{\frac{1}{3}}$.
If they weaken the condition to our case, they can only obtain a rate like ours, i.e. $O(\eps^{\frac{1}{4}-\delta})$; but even if we strengthen our condition to their case, we still can only obtain the rate $O(\eps^{\frac{1}{4}-\delta})$, instead of the expected $O(\eps^{\frac{1}{3}-\delta})$.
\end{rem}

\appendix

\section{}

%

\begin{thm}[Skorokhod embedding theorem \cite{MR624435}]\label{ske}
Let $\{S_n=\sum_{i=1}^{n}X_{i},\mathcal{F}_{n}, n\ge 1\}$ be a zero-mean, square-integrable martingale. Then there exist a probability space supporting a (standard) Brownian motion $W$ and a sequence of nonnegative variables $\tau_{1}, \tau_{2},\ldots$ with the following properties: if $T_{n}=\sum_{i=1}^{n}\tau_{i}$, $S'_{n}=W(T_n)$, $X'_1=S'_1$, $X'_n=S'_{n}-S'_{n-1}$ for $n\ge 2$, and $\mathcal{B}_{n}$ is the $\sigma$-field generated by $S'_1,\ldots, S'_{n}$ and $W(t)$ for $0\le t\le T_n$, then
\begin{enumerate}
\item $\{S_{n}, n\ge 1\}=_{d} \{S'_n, n\ge 1\}$;
\item $T_n$ is a stopping time with respect to $\mathcal{B}_n$;
\item $E(\tau_{n}|\mathcal{B}_{n-1})=E(|X'_{n}|^{2}|\mathcal{B}_{n-1}) $ a.s.;
\item for any $p>1$, there exists a constant $C_p<\infty$ depending only on $p$ such that
\[
E(\tau_{n}^{p}|\mathcal{B}_{n-1})\leq C_{p}E(|X'_{n}|^{2p}|\mathcal{B}_{n-1})=C_{p}E(|X'_{n}|^{2p}|X'_1,\ldots,X'_{n-1}) \quad \hbox{a.s.},
\]
where $C_p=2(8/\pi^2)^{p-1}\Gamma(p+1)$, with $\Gamma$ being the usual Gamma function.
\end{enumerate}
\end{thm}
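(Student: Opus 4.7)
The plan is to combine the classical one-variable Skorokhod embedding with an inductive construction that handles each martingale increment conditionally, using the strong Markov property of Brownian motion.

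First I would establish the single-variable version: given any zero-mean random variable $Y$ with $E[Y^2]<\infty$, there is an enlargement of any probability space carrying a standard Brownian motion $W$ (with $W(0)=0$) on which one can produce a stopping time $\tau$ such that $W(\tau)\stackrel{d}{=} Y$, $E[\tau]=E[Y^2]$, and $E[\tau^p]\le C_p\,E[|Y|^{2p}]$ for every $p>1$. Skorokhod's original construction realizes the law of $Y$ as a mixture, over a random pair $(A,B)$ with $A\le 0\le B$, of the two-point law with masses $B/(B-A)$ at $A$ and $-A/(B-A)$ at $B$; then $\tau$ is taken to be the exit time of $W$ from the random interval $(A,B)$. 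The identity $E[\tau\mid A,B]=-AB=E[W(\tau)^2\mid A,B]$ comes from the optional stopping theorem applied to $W^2-t$, and the moment control of the exit time is propagated from the explicit Laplace transform $E[\exp(-\lambda\tau_{[-1,1]})]=1/\cosh(\sqrt{2\lambda})$ on the symmetric interval, rescaled to $[a,b]$.

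Next I would extend to the martingale by induction on $n$. Assume that a Brownian motion $W$, stopping times $T_1\le\cdots\le T_{n-1}$, and random variables $X'_1,\ldots,X'_{n-1}$ with the same joint law as $X_1,\ldots,X_{n-1}$ have already been built, satisfying $S'_k=W(T_k)$; let $\mathcal{B}_{n-1}$ be the $\sigma$-field generated by $X'_1,\ldots,X'_{n-1}$ together with $W$ restricted to $[0,T_{n-1}]$. Form the regular conditional distribution $\mu_n(\cdot\mid\mathcal{B}_{n-1})$ of $X_n$ given the past, which is almost surely centered. Apply the one-variable embedding conditionally to $\mu_n(\cdot\mid\mathcal{B}_{n-1})$, using an auxiliary independent randomization to measurably pick a pair $(A_n,B_n)$; then define
\[
\tau_n:=\inf\{t\ge 0:W(T_{n-1}+t)-W(T_{n-1})\notin(A_n,B_n)\},\qquad T_n:=T_{n-1}+\tau_n,\qquad X'_n:=W(T_n)-W(T_{n-1}).
\]
The strong Markov property guarantees that $\{W(T_{n-1}+t)-W(T_{n-1})\}_{t\ge 0}$ is a Brownian motion independent of $\mathcal{B}_{n-1}$, so the conditional embedding is well-defined and preserves the joint law.

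Properties (1) and (2) are then immediate from the construction. For (3) and (4), conditioning on $\mathcal{B}_{n-1}$ reduces the claim to the single-variable statement applied to $\mu_n(\cdot\mid\mathcal{B}_{n-1})$: integrating $E[\tau\mid A,B]=-AB$ against the Skorokhod mixture gives $E[\tau_n\mid\mathcal{B}_{n-1}]=E[|X'_n|^2\mid\mathcal{B}_{n-1}]$, and the analogous moment estimate with $\tau^p$ gives (4). The main obstacle will be extracting the sharp constant $C_p=2(8/\pi^2)^{p-1}\Gamma(p+1)$: this requires the explicit moment formulas $E[\tau_{[-1,1]}^p]$ obtained from differentiating the Laplace transform $1/\cosh(\sqrt{2\lambda})$ and using $\sum_{k\ge 0}(2k+1)^{-(2p+1)}=(1-2^{-(2p+1)})\zeta(2p+1)$-type identities, then scaling to $[a,b]$ and averaging over the mixture while carefully tracking the constants. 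A secondary technical point is arranging a measurable selection of the pair $(A,B)$ from the random conditional distribution $\mu_n(\cdot\mid\mathcal{B}_{n-1})$ in a way compatible with the martingale structure across the induction step.
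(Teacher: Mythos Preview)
The paper does not prove this theorem: it is stated in the appendix purely as a quoted result from Hall and Heyde \cite{MR624435}, with no argument supplied. There is therefore no ``paper's own proof'' to compare your proposal against.

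That said, your sketch is the standard route to the Skorokhod embedding for martingales and is essentially what one finds in the cited reference: the one-variable embedding via a randomized two-barrier exit, iterated conditionally along the martingale using the strong Markov property, with the moment bounds on $\tau$ read off from the Laplace transform of the exit time of the symmetric interval. The two places you flag as obstacles --- extracting the explicit constant $C_p=2(8/\pi^2)^{p-1}\Gamma(p+1)$ from the series expansion of $1/\cosh(\sqrt{2\lambda})$, and the measurable selection of the randomizing pair $(A_n,B_n)$ from the regular conditional distribution --- are exactly the technical points Hall and Heyde handle, so your proposal is on target but simply goes beyond what the present paper attempts.
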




\begin{thm}\footnote{This estimate was suggested to us by Prof. Ian Melbourne.}\label{mpe}
Let $X_1, X_2,\ldots$ be a sequence of identically distributed random variables with $\|X_1\|_{L^p}<\infty$. Then $\big\|\max_{1\le k\le n}| X_k|\big\|_{L^{p}}=o(n^{\frac{1}{p}})$ as $n\to \infty$.
\end{thm}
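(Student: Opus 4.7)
The plan is to use a truncation argument exploiting that $|X_1|^p$ is integrable, so its tail can be made arbitrarily small uniformly by choosing the truncation level high.

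First, I would fix an arbitrary $M>0$ and decompose $|X_k| \le M + |X_k|1_{\{|X_k|>M\}}$, which immediately gives
\[
\max_{1\le k\le n}|X_k| \le M + \max_{1\le k\le n}|X_k|1_{\{|X_k|>M\}}.
\]
Next, I would bound the maximum of the tail part by the crude union bound $(\max_k a_k)^p \le \sum_k a_k^p$ for nonnegative $a_k$, and pass to expectations, using the identically distributed assumption to get
\[
\Bigl\|\max_{1\le k\le n}|X_k|1_{\{|X_k|>M\}}\Bigr\|_{L^p}^p \le \sum_{k=1}^n \E\bigl[|X_k|^p 1_{\{|X_k|>M\}}\bigr] = n\,\E\bigl[|X_1|^p 1_{\{|X_1|>M\}}\bigr].
\]
Combining the two estimates and applying Minkowski's inequality (or simply the subadditivity of the $L^p$-norm on the split above) yields
\[
n^{-1/p}\Bigl\|\max_{1\le k\le n}|X_k|\Bigr\|_{L^p} \le M\,n^{-1/p} + \bigl(\E[|X_1|^p 1_{\{|X_1|>M\}}]\bigr)^{1/p}.
\]

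To conclude, I would run the standard $\varepsilon$-argument: for any $\varepsilon>0$, first choose $M$ large enough that $\E[|X_1|^p 1_{\{|X_1|>M\}}] < (\varepsilon/2)^p$, which is possible since $|X_1|^p$ is integrable (dominated convergence as $M\to\infty$). With this $M$ now fixed, the first term $M n^{-1/p}$ is less than $\varepsilon/2$ for all $n$ sufficiently large. Therefore
\[
\limsup_{n\to\infty}\, n^{-1/p}\Bigl\|\max_{1\le k\le n}|X_k|\Bigr\|_{L^p} \le \varepsilon,
\]
and since $\varepsilon$ was arbitrary the $\limsup$ is $0$, giving the claimed $o(n^{1/p})$ bound. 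There is no serious obstacle: independence is not needed since the bound $(\max_k a_k)^p \le \sum_k a_k^p$ is purely pointwise, and the argument goes through for an arbitrary identically distributed sequence.
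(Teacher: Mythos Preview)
Your proof is correct and follows essentially the same truncation idea as the paper. The only cosmetic difference is that the paper truncates $|X_k|^p$ at the moving level $n\epsilon$ (so the tail term $\E[|X_1|^p 1_{\{|X_1|^p>n\epsilon\}}]$ vanishes automatically as $n\to\infty$) and works directly with $p$-th moments, whereas you truncate $|X_k|$ at a fixed level $M$ and use Minkowski plus a two-step $\varepsilon$-$M$ argument; both routes rest on the same pointwise bound $\max_k a_k^p\le\sum_k a_k^p$ and the identically distributed assumption.
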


\begin{proof}
For $\epsilon>0$, we have
\[
\big|X_k\big|^p\le n\epsilon+\big|X_k\big|^p1_{\{|X_k|^p>n\epsilon\}}.
\]
So
\[
\max_{1\le k\le n}\big|X_k\big|^p\le n\epsilon+\sum_{k=1}^{n}\big|X_k\big|^p1_{\{|X_k|^p>n\epsilon\}}.
\]
Since $\{X_k\}$ is identially distributed,
\[
\E\max_{1\le k\le n}\big|X_k\big|^p\le n\epsilon+n\E\big[|X_k|^p1_{\{|X_k|^p>n\epsilon\}}\big].
\]
It follows that
\[
\frac{1}{n}\E\max_{1\le k\le n}\big|X_k\big|^p\le \epsilon+\E\big[|X_k|^p1_{\{|X_k|^p>n\epsilon\}}\big]\to \epsilon
\]
as $n\to \infty$. Hence the result follows because $\epsilon$ can be taken arbitrarily small.
\end{proof}

\section*{Acknowledgements}
The authors sincerely thank Professor Ian Melbourne for his valuable suggestions. The authors are deeply grateful to the referee for his/her
great patience and very careful reading of the paper and for many valuable suggestions which lead to significant improvements of the paper.
This work is supported by NSFC Grants 11871132, 11925102, Dalian High-level Talent Innovation Project (Grant 2020RD09), and
Xinghai Jieqing fund from Dalian University of Technology.

\end{document}